\titleclass{\part}{top}
\titleformat{\part}[display]
{\huge\bfseries\centering}{\partname~\thepart}{0pt}{}
\titlespacing*{\part}{0pt}{160pt}{40pt}
\newcommand{\Jast}{{\mathcal J}}
\newcommand{\AW}{\mathcal{AW}}
\newcommand{\W}{\mathcal{W}}
\newcommand{\Leb}{{\rm Leb}}
\newcommand{\SC}{{SC}}
\newcommand{\fp}[1]{{\mathbf #1}}
\newcommand{\X}{\mathcal{X}}
\newcommand{\Y}{\mathcal{Y}}
\newcommand{\U}{\mathcal{U}}
\newcommand{\mean}{\operatorname{mean}}
\newcommand{\price}{\operatorname{price}}
\newcommand{\law}{\operatorname{law}}
\newcommand{\R}{\mathbb{R}}
\newcommand{\N}{\mathbb{N}}
\DeclareMathOperator{\id}{Id}
\DeclareMathOperator{\supp}{Supp}
\DeclareMathOperator{\proj}{proj}
\DeclareMathOperator{\co}{\operatorname{co}}
\renewcommand{\epsilon}{\varepsilon}
\newcommand{\1}{\mathds{1}}
\newcommand{\mathbbm}[1]{\1}
\title{An extension of martingale transport and stability in robust finance}
\author{Benjamin Jourdain\thanks{CERMICS, Ecole des Ponts, INRIA, Marne-la-Vallée, France. E-mails: benjamin.jourdain@enpc.fr} \and Gudmund Pammer\thanks{ETH Z\"urich, Switzerland. E-mail: gudmund.pammer@math.ethz.ch}}
\date{\today}
\numberwithin{equation}{section}
\theoremstyle{plain}
\newtheorem{prooff}{Proof}[section]
\newtheorem{lemma}[prooff]{Lemma}
\newtheorem{theorem}[prooff]{Theorem}
\newtheorem{proposition}[prooff]{Proposition}
\newtheorem{corollary}[prooff]{Corollary}
\newtheorem{assumpA}{Assumption}
\newtheorem{assumpB}{Assumption}
\theoremstyle{definition}
\newtheorem{remark}[prooff]{Remark}
\theoremstyle{definition}
\theoremstyle{plain}
\begin{document}
\maketitle
\begin{abstract} 
While many questions in robust finance can be posed in the martingale optimal transport framework or its weak extension, others like the subreplication price of VIX futures, the robust pricing of American options or the construction of shadow couplings necessitate additional information to be incorporated into the optimization problem beyond that of the underlying asset.
In the present paper, we take into account this extra information by introducing an additional parameter to the weak martingale optimal transport problem. 
We prove the stability of the resulting problem with respect to the risk neutral marginal distributions of the underlying asset, thus extending the results in \cite{BeJoMaPa21b}. 
A key step is the generalization of the main result in \cite{BJMP22} to include the extra parameter into the setting.
This result establishes that any martingale coupling can be approximated by a sequence of martingale couplings with specified marginals, provided that the marginals of this sequence converge to those of the original coupling.
Finally, we deduce stability of the three previously mentioned motivating examples.
\end{abstract}

{\bf Keywords:} Martingale Optimal Transport, Adapted Wasserstein distance, Robust finance, Weak transport, Stability, Convex order, Martingale couplings.
\section{Introduction}
In mathematical finance, the evolution of an asset price on a financial market is modeled by an adapted stochastic process $(X_t)$ on a filtered probability spaces $(\Omega,\mathcal F,\mathbb P, (\mathcal F_t))$.
To ensure the absence of arbitrage opportunities, risk-neutral measures (also known as equivalent martingale measures) $\mathbb Q$ are considered under which the asset price process $(X_t)$ is a martingale, up to assuming zero interest rates.
The reason why a transport type problem arises in robust finance is because the marginals of $(X_t)$ can be derived from market information based on the celebrated observation of Breeden--Litzenberger \cite{BrLi78}.
According to this observation, the prices of traded vanilla options determine the marginals $(\mu_t)$ of $(X_t)$ at their respective maturity times under the risk-neutral measure $\mathbb Q$.
Instead of considering one specific financial model, a robust approach is to consider all martingale measures that are compatible with this observation, that is, all filtered probability spaces $(\Omega,\mathcal F,\mathbb Q, (\mathcal F_t))$ and stochastic processes $(X_t)$ such that
\begin{equation}
    \label{intro:compatible}
    X\text{ is a }(\mathbb Q,(\mathcal F_t))\text{-martingale and } X_t \sim \mu_t \text{ at all maturity times $t$}.
\end{equation}
Then the robust price bounds for an option with payoff $\Phi$ are obtained by solving a transport type problem \cite{BeHePe12,GaHeTo13} where the optimization takes place over the set of all risk-neutral measures that are compatible with the observed prices of vanilla options.
That are martingale measures $\mathbb Q$ under which $(X_t)$ has the correct marginal distributions, i.e.,
\begin{equation}\label{intro:MMOT}
    \inf / \sup \left\{  \mathbb E_\mathbb Q[\Phi] : (\Omega,\mathcal F,\mathbb Q,(\mathcal F_t),(X_t)) \text{ satisfying }\eqref{intro:compatible} \right\}.
\end{equation}
However, as we can only observe the prices of a finite number of derivatives (up to a bid ask spread), the marginals $(\mu_t)$ are merely approximately known.
Therefore, it is crucial to establish the stability of the transport type problem \eqref{intro:MMOT} with respect to the marginals.

This article is concerned with the one time period setting, that is $t \in \{1,2\}$
.
Then, when $\Phi$ is written on the underlying asset $X$, \eqref{intro:MMOT} boils down to a martingale optimal transport (MOT) problem
\begin{equation}
    \label{intro:MOT}
    \underset{\pi \in \Pi_M(\mu_1,\mu_2)}{\inf / \sup}  \int \Phi(x,y) \, \pi(dx,dy),
\end{equation}
where $\Pi_M(\mu_1,\mu_2)$ denotes the set of martingale couplings with marginals $\mu_1$ and $\mu_2$, i.e., the set of laws of 1-time step martingales $(X_1,X_2)$ with $X_t \sim \mu_t$.
Continuity of the value of \eqref{intro:MOT} w.r.t.\ the marginal input, which is called stability, has been proved in \cite{BaPa19, Wi19}.
Weak martingale optimal transport (WMOT) is a nonlinear generalization of MOT analogous to weak optimal transport, which is a nonlinear generalization of classical optimal transport proposed by Gozlan, Roberto, Samson and Tetali \cite{GoRoSaTe17}, and was considered in \cite{BaPa19,BeJoMaPa21b}.
In WMOT one allows for more general payoffs $\Phi$ which may depend on the conditional law of $X_2$ given $X_1$ in addition to $X$ itself, and the corresponding WMOT problem reads as
\begin{equation}
    \label{intro:WMOT}
    \underset{\pi \in \Pi_M(\mu_1,\mu_2)}{\inf / \sup} \int \Phi\left(x, {\pi_x}\right) \, \mu_1(dx),
  \end{equation}
{where $\pi_x$ comes from the desintegration $\pi(dx,dy)=\mu_1(dx)\pi_x(dy)$.}
Stability of WMOT has been studied in \cite{BeJoMaPa21b} and was therein used to establish stability of the superreplication price of VIX futures and the stretched Brownian motion. 

Even though many problems in robust finance are covered by 
WMOT, some important examples require that information is included into the optimization problem beyond that of the underlying asset.
Accordingly these problems can not be properly treated in the 
WMOT frameworks.
For us, guiding examples of such problems are the subreplication price of VIX futures, the robust pricing of American options and the construction of shadow couplings.
Through augmenting WMOT by an additional parameter, we demonstrate how this extra information can be taken into account, prove stability of the resulting problem, and consequently deduce stability of the three guiding examples. 
A key step is the generalization of the main result in \cite{BJMP22} to our current setting.
This result states that any martingale coupling can be approximated by a sequence of martingale couplings with specified marginals, provided that the marginals of this sequence converge to those of the original coupling.
As a side product of our approach, we establish the very same result on the level of stochastic processes with general filtrations (c.f.\ \cite{BaBePa21}) any 1-step martingale on some filtered probability space can be approximated w.r.t.\ the adapted Wasserstein distance by martingales on (perhaps different) filtered probability spaces, provided that the marginals of this sequence converge to those of the original martingale.

\subsection{Notation}

Let $(\X,d_\X)$ and $(\Y, d_\Y)$ be Polish metric spaces and $p \ge 1$
We equip the product $\X \times \Y$ with the product metric $d_{\X \times \Y}((x,y),(\tilde x,\tilde y)) := (d_\X(x,\tilde x)^p + d_\Y(y,\tilde y)^p)^{1 / p}$ which turns $\X \times \Y$ into a Polish metric space.
The set of Borel probability measures on $\X$ is denoted by $\mathcal P(\X)$.
For $\mu \in \mathcal P(\X)$ and $\nu \in \mathcal P(\Y)$, we write $\Pi(\mu,\nu)$ for the set of all probability measures on $\X \times \Y$ with marginals $\mu$ and $\nu$.
We denote by $\mathcal P_p(\X)$ the subset of $\mathcal P(\X)$ that finitely integrates $x \mapsto d_\X^p(x,x_0)$ for some (thus any) $x_0 \in \X$ and endow $\mathcal P_p(\X)$ with the $p$-Wasserstein distance $\mathcal W_p$ so that $(\mathcal P_p(\X), \mathcal W_p)$ is a Polish metric space where, for $\mu,\nu \in \mathcal P_p(\X)$,
\begin{equation}
    \label{eq:defwp}
    \mathcal W_p(\mu, \nu) := 
    \left( \inf_{\pi \in \Pi(\mu, \nu)} \int d_\X(x, y)^p \, \pi(dx,dy) \right)^{1 / p}.
\end{equation} 
The set of continuous and bounded functions on $\X$ is denoted by $C_b(\X)$ and we use the shorthand notation $\mu(f)$ to write the integral of a $\mu$-integrable function $f \colon \X \to \R \cup \{ \pm \infty \}$ w.r.t.\ a Borel measure $\mu$ on $\X$.
Given a measurable map $f \colon \X \to \Y$, we denote by $f_\# \mu$ the push-forward measure of $\mu$ under $f$.
For Polish spaces $\X_1,\X_2,\X_3$ and $\pi \in \mathcal P(\X_1 \times \X_2 \times \X_3)$ and a non-empty subset $I$ of $\{1,2,3\}$, $\proj_I \pi$ denotes the image of $\pi$ by the projection to the coordinates in $I$, for example, $\proj_1 \pi$ is the $\X_1$-marginal of $\pi$.
Further, we write $\pi_{x_1,x_2}$ for the disintegration of $\pi(dx_1,dx_2,dx_3)=\proj_{1,2}\pi(dx_1,dx_2) \pi_{x_1,x_2}(dx_3)$.
Frequently, we use the injection (c.f.\ \cite[Section 2]{BaBePa18})
\begin{align*}
    J \colon& \mathcal P( \X_1 \times \X_2 \times \X_3) \to \mathcal P ( \X_1 \times \X_2 \times \mathcal P(\X_3) ) \\
    & \pi \mapsto \left((x_1,x_2,x_3) \mapsto (x_1, x_2, \pi_{x_1,x_2})\right)_\# \pi,
\end{align*}
and remark that $J(\pi^k) \to J(\pi)$ in $\mathcal P_p(\X_1 \times \X_2 \times \mathcal P_p(\X_3))$ implies $\pi^k \to \pi$ in $\mathcal P_p(\X_1 \times \X_2 \times \X_3)$.

Unless stated otherwise, $\mathbb R$ is equipped with the Euclidean distance and $\Leb$ denotes the Lebesgue measure on $[0,1]$.
Two measures $\mu,\nu \in \mathcal P_1(\R)$ are said to be in the convex order and we write $\mu\le_{cx}\nu$, if 
\[ 
    \forall \varphi\colon \R\to\R\mbox{ convex,}\quad \mu(\varphi) \le \nu(\varphi).
\]
We write $\mean \colon \mathcal P_1(\R) \to \R$ for $\mean(\rho) = \int y \, \rho(dy)$ and denote by
\[
    \Lambda_M(\mu,\nu) := \left\{ P \in \mathcal P_1(\R \times \U \times \mathcal P_1(\R)) : \int \delta_x \otimes \rho \, P(dx,d\rho) \in \Pi(\mu,\nu), \, \mean(\rho) = x \, P(dx,d\rho)\text{-a.s.} \right\}.
\]

\subsection{Organization of the paper}
Section \ref{sec:main} presents the main results of this paper.
First, we introduce in Subsection \ref{ssec:extension} the setup with the additional parameter and state in Theorem \ref{thm:intro.approximation} and Theorem \ref{thm:intro.stability} the corresponding results related to stability.
Furthermore, we present in Subsection \ref{ssec:main.process} consequences of these results in the filtered process setting, namely Corollary \ref{cor:filpro}.
Subsequently, we explain and state stability of the three guiding examples, that are, subreplication of VIX futures (Subsection \ref{ssec:main.VIX}), robust pricing of American options (Subsection \ref{ssec:main.American}), and shadow couplings (Subsection \ref{ssec:main.SC}).
Section \ref{sec:proofs} is concerned with the proofs.

\section{Main results}\label{sec:main}

\subsection{An extension of martingale transport} \label{ssec:extension}
We introduce now a framework that is sufficiently general to deal with the question of stability of our guiding examples.
From now on, let $(\U,d_{\U})$ be a Polish metric space that models an extra information parameter $u \in \U$.
Given $\bar \mu \in \mathcal P_1(\R \times \U)$ and $\nu \in \mathcal P_1(\R)$ with $\proj_1 \bar \mu \le_{cx} \nu$, we denote by $\Pi_M(\bar \mu,\nu)$ the set of couplings $\pi \in \Pi(\bar \mu,\nu)$ such that $\mean(\pi_{x,u}) = x$ $\bar\mu(dx,du)$-a.e.
Central to establishing the upper (resp. lower) semicontinuity property in our stability results for minimization (resp. maximization) problems 
 is Theorem \ref{thm:approximation}
, which is a reinforced version of the result below:
\begin{theorem} \label{thm:intro.approximation}
    Let $(\bar\mu^k, \nu^k)_{k \in \N}$, $\proj_1 \bar \mu^k \le_{cx} \nu^k$, be a convergent sequence in $\mathcal P_1(\mathbb R \times \U) \times \mathcal P_1(\R)$ with limit $(\bar \mu,\nu)$.
    Then, every coupling $\pi \in \Pi_M(\bar \mu,\nu)$ is the weak limit of a sequence $(\pi^k)_{k \in \N}$ with $\pi^k \in \Pi_M(\bar \mu^k,\nu^k)$ {and $J(\pi)$ is the weak limit of $(J(\pi^k))_{k \in \N}$}.
\end{theorem}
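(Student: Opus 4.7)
The plan is to extend the approximation theorem of \cite{BJMP22}, which corresponds to the case $\U=\{\ast\}$, to accommodate the extra parameter $u$. My approach combines an application of the $\R$-valued BJMP22 result to the projected coupling $\proj_{1,3}\pi$ with a transport-based lifting that re-attaches the $U$-component while preserving both the exact martingale property and the full conditional structure $(x,u)\mapsto \pi_{x,u}$ demanded by the $J$-convergence.

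First I would set $\mu^k:=\proj_1\bar\mu^k$ and $\mu:=\proj_1\bar\mu$, so that $\mu^k\to\mu$ in $\mathcal P_1(\R)$ with $\mu^k\le_{cx}\nu^k$. For $\tilde\pi:=\proj_{1,3}\pi\in\Pi_M(\mu,\nu)$, \cite{BJMP22} produces $\tilde\pi^k\in\Pi_M(\mu^k,\nu^k)$ with $J(\tilde\pi^k)\to J(\tilde\pi)$ in $\mathcal P_1(\R\times\mathcal P_1(\R))$. The naive glueing $\pi^k_{\mathrm{naive}}(dx,du,dy):=\bar\mu^k(dx,du)\,\tilde\pi^k_x(dy)$ already lies in $\Pi_M(\bar\mu^k,\nu^k)$, but its weak limit equals $\mu(dx)\bar\mu_x(du)\tilde\pi_x(dy)$, which coincides with $\pi$ only under the conditional independence $Y\Perp U\mid X$; for a general $\pi$ this construction erases the $u$-dependence of the kernel $\pi_{x,u}$.

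To recover the correct kernel, I would use a $\mathcal W_1$-optimal coupling $\alpha^k\in\Pi(\bar\mu^k,\bar\mu)$ and define the preliminary kernel
\[
    \tilde K^k(x^k,u^k;dy):=\int \pi_{x,u}(dy)\,\alpha^k_{x^k,u^k}(dx,du).
\]
Because $\alpha^k$ concentrates on the diagonal of $(\R\times\U)^2$, the kernel $\tilde K^k$ approximates $\pi_{x,u}$ in a joint sense; the induced measure $\bar\mu^k(dx,du)\,\tilde K^k(x,u;dy)$ has exact first marginal $\bar\mu^k$, second marginal converging to $\nu$ but not equal to $\nu^k$, and conditional mean converging to $x^k$ but not exactly equal to it. A BJMP22-style correction applied fiberwise would then restore the exact martingale property and the exact $\nu^k$ marginal with controllable perturbation, producing the desired $\pi^k\in\Pi_M(\bar\mu^k,\nu^k)$. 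A Lusin-type regularization of $(x,u)\mapsto\pi_{x,u}$ by continuous kernels, followed by a diagonal extraction, would handle the fact that this kernel is only measurable.

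The main obstacle is precisely the correction step: it must be quantitative and carried out in an adapted (kernel) sense, because $J$-convergence requires that $\pi^k_{x,u}$ converges to $\pi_{x,u}$ in the appropriate joint sense, not merely that $\pi^k\to\pi$ weakly. Establishing this—essentially a stability estimate bounding the adapted Wasserstein distance between the constructed and the target martingale coupling by $\mathcal W_1(\bar\mu^k,\bar\mu)+\mathcal W_1(\nu^k,\nu)$ up to vanishing terms—would be the technical heart of the argument, and one expects Theorem~\ref{thm:intro.approximation} to follow from such a refinement (the ``reinforced version'' alluded to in Theorem~\ref{thm:approximation}).
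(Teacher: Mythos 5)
Your proposal correctly identifies the failure of the naive gluing (which loses the $u$-dependence of the kernel) and the need to reattach the $\U$-component via an optimal coupling $\alpha^k\in\Pi(\bar\mu^k,\bar\mu)$. However, the step you call ``a BJMP22-style correction applied fiberwise'' is where a genuine gap lies, and it is exactly the point that the paper resolves by a different idea.

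After averaging $\pi_{x,u}$ against $\alpha^k_{x^k,u^k}$, the resulting kernel has the wrong conditional mean and the wrong second marginal. The mean defect can be removed by a deterministic shift of each $\tilde K^k(x^k,u^k)$, but this is a fiberwise operation while the marginal constraint $\int \tilde K^k(x,u;\cdot)\,\bar\mu^k(dx,du)=\nu^k$ is a \emph{global} one; fixing it requires a martingale-preserving transport from the actual (shifted) second marginal to $\nu^k$, coordinated across all fibers. The only handle one would seem to have is again a quantitative stability statement for $\Pi_M(\cdot,\cdot)$ in the adapted Wasserstein distance, which is exactly the content of the theorem to be proved (equivalently, of Corollary \ref{cor:haus}); so the argument as sketched is circular. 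The main result of \cite{BJMP22} guarantees the \emph{existence} of an $\mathcal{AW}_1$-approximating sequence for a fixed target, not a canonical correction map with a bound of the form $\mathcal{AW}_1(\pi^k,\pi)\lesssim \mathcal W_1(\bar\mu^k,\bar\mu)+\mathcal W_1(\nu^k,\nu)$, and no such bound is available (in fact such a Lipschitz stability is false in general).

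The paper's route sidesteps this entirely. The key device is Lemma \ref{lem:core_theorem_finite_sum}: any $\pi\in\Pi_M(\bar\mu,\nu)$ is $\mathcal{AW}_1$-approximated by ``simple'' couplings $\pi^J(dx,du,dy)=\sum_{j=1}^J\mathbbm 1_{\U_j}(u)\,\bar\mu(dx,du)\,K_{j,x}(dy)$, where $(\U_j)_{j=1}^J$ is a finite partition of $\U$ into $\proj_2\bar\mu$-continuity sets chosen to be $\varepsilon$-small (plus one remainder of small $d_\U(\cdot,u_0)$-mass). Once the kernel is piecewise constant in $u$, the problem decomposes into $J$ \emph{classical} (no-$u$) martingale approximation problems, one for each $\bar\mu_j:=\mathbbm 1_{\R\times\U_j}\bar\mu$, to which \cite[Theorem 2.6]{BJMP22} applies directly. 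What is then needed is not a quantitative correction of kernels but a decomposition of the target marginal $\nu^k=\sum_j\nu^k_j$ compatible with $\mu^k_j\le_{cx}\nu^k_j$ and with $\nu^k_j\to\nu_j$; this is Proposition \ref{prop:approximation_finite_pairs}, proved via Wasserstein projections in convex order. Your Lusin-type regularization plays a role analogous to the partition step but in a weaker form (continuity in $u$ alone would not give the block-decomposable structure that makes the componentwise application of \cite{BJMP22} possible). In short: you have located the right obstacle, but the proposed resolution presupposes a stability tool that does not exist independently, whereas the paper's partition-of-$\U$ reduction is the missing idea.
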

  
In view of the counter-example by Br\"uckerhoff and Juillet \cite{BrJu22}, this result does not generalize to higher dimensions i.e.\ when $\R$ is replaced by $\R^d$ with $d\ge 2$.
This generalization of the main result of \cite{BJMP22} to the present framework is also key to establish  
the stability w.r.t.\ the marginals of the following variant of WMOT:
\begin{equation}
    \label{eq:pwmot}
    V_C(\bar\mu,\nu) := \inf_{\bar \pi \in \Pi_M(\bar \mu,\nu)}
    \int C(x,u,\bar \pi_{x,u}) \, \bar \mu(dx,du).
\end{equation}
As usual, it is necessary to impose regularity on the cost $C$ in order to have a continuous dependence of the optimal value of \eqref{eq:pwmot} w.r.t.\ the marginals.
Thus, we will suppose the following continuity assumption on the cost function:
\begin{assumpA}
    \label{ass:cost.A}
    We say $C \colon \mathbb R \times \U \times \mathcal P_p(\mathbb R) \to \mathbb R$ satisfies Assumption \ref{ass:cost.A} if
    $C$ is continuous and there is $K > 0$ such that, for all $(x,u,\rho) \in \R \times \U \times \mathcal P_p(\R)$ and some $u_0 \in \U$,
    \begin{equation}
        \label{eq:ass.cost.bound}
        |C(x,u,\rho)| \le K\left( 1 + |x|^p + d_\U^p(u,u_0) + \int |y|^p \, \rho(dy) \right).
    \end{equation}
  \end{assumpA}
  
\begin{theorem} \label{thm:intro.stability}
    Let $C$ satisfy Assumption \ref{ass:cost.A} and $C(x,u,\cdot)$ be convex for all $(x,u)\in\R\times\U$. Then the value function $V_C$ is attained and continuous on $\{ (\bar \mu,\nu) : \proj_1 \bar \mu \le_{cx} \nu \} \subseteq \mathcal P_p(\R \times \U) \times \mathcal P_p(\R)$.
    Furthermore, when $(\bar \mu^k, \nu^k)_{k \in \N}$, $\proj_1 \bar \mu^k \le_{cx} \nu^k$, converges to $(\bar \mu, \nu)$, we have: 
    \begin{enumerate}[label = (\roman*)]
        \item if, for $k \in \N$, $\pi^k \in \Pi_M(\bar \mu^k,\nu^k)$ is optimal for \eqref{eq:pwmot}, so are accumulation points of $(\pi^k)_{k \in \N}$;
        \item if additionally $C(x,u,\cdot)$ is strictly convex, then optimizers to \eqref{eq:pwmot} are unique.
        {Furthermore, $(\pi^k)_{k \in \N}$ and $(J(\pi^k))_{k \in \N}$ weakly converge to the optimizer of \eqref{eq:pwmot} with marginals $(\bar \mu,\nu)$ and its image under $J$, respectively.}
    \end{enumerate}
  \end{theorem}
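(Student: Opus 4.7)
I would split the proof into (I) upper semicontinuity via the approximation Theorem \ref{thm:intro.approximation}, (II) attainment and lower semicontinuity via tightness plus Jensen on the lift $J(\pi)$, and (III) uniqueness and $J$-convergence from strict convexity. A recurring ingredient is that
\[
\int\!\Bigl(1+|x|^p+d_\U^p(u,u_0)+\textstyle\int|y|^p\rho(dy)\Bigr)\,J(\pi^k)(dx,du,d\rho)=1+\!\int |x|^p\bar\mu^k+\!\int d_\U^p(u,u_0)\bar\mu^k+\!\int|y|^p\nu^k
\]
converges by $\mathcal P_p$-convergence of the marginals, which, via Assumption A, furnishes uniform integrability of $C$ under any weakly convergent sequence of lifts $J(\pi^k)$. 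For (I), granting attainment below, I fix an optimizer $\pi^*\in\Pi_M(\bar\mu,\nu)$ and invoke Theorem \ref{thm:intro.approximation} to produce $\pi^k\in\Pi_M(\bar\mu^k,\nu^k)$ with $J(\pi^k)\to J(\pi^*)$ weakly; then $V_C(\bar\mu^k,\nu^k)\le\int C\,dJ(\pi^k)\to \int C\,dJ(\pi^*)=V_C(\bar\mu,\nu)$, yielding $\limsup V_C(\bar\mu^k,\nu^k)\le V_C(\bar\mu,\nu)$.

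For part (II), I take either a minimizing sequence at $(\bar\mu,\nu)$ (attainment) or an optimal sequence $\pi^k\in\Pi_M(\bar\mu^k,\nu^k)$ (for lsc and (i)). The lift $(J(\pi^k))_k$ is tight in $\mathcal P_p(\R\times\U\times\mathcal P_p(\R))$, because its first two marginals are $\bar\mu^k\to\bar\mu$ in $\mathcal P_p$ and $\int\W_p^p(\rho,\delta_{y_0})\,J(\pi^k)(d\rho)=\int|y-y_0|^p\nu^k(dy)$ is bounded. Extract a weak limit $P$; the $J(\pi^k)$-a.s.\ identity $\mean(\rho)=x$ passes to the limit by continuity of $\mean$ on $\mathcal P_p(\R)$. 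Define the barycentric coupling
\[
\hat\pi(dx,du,dy):=P(dx,du,d\rho)\,\rho(dy),
\]
which is easily checked to lie in $\Pi_M(\bar\mu,\nu)$. Jensen's inequality for the convex function $\rho\mapsto C(x,u,\rho)$ gives
\[
\int C(x,u,\hat\pi_{x,u})\,\bar\mu(dx,du)\le \int C(x,u,\rho)\,P(dx,du,d\rho)=\lim_k\int C\,dJ(\pi^k),
\]
the last equality by the uniform integrability recalled above. In the minimizing case this sandwich gives $V_C(\bar\mu,\nu)\le\int C\,d\hat\pi\le V_C(\bar\mu,\nu)$, so $\hat\pi$ is an optimizer. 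For lsc and (i), I extract a weak limit $\pi^*$ of $\pi^k$ and test $J(\pi^k)\to P$ against $g(x,u,\rho):=\int f(x,u,y)\rho(dy)$ with $f\in C_b$ to identify $\pi^*=\hat\pi$; hence $V_C(\bar\mu,\nu)\le\int C\,d\hat\pi\le\liminf V_C(\bar\mu^k,\nu^k)$, which together with (I) yields continuity and (i).

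For (ii), strict convexity rules out distinct optimizers: if $\pi^1,\pi^2$ both minimize, the midpoint $\tfrac12(\pi^1+\pi^2)\in\Pi_M(\bar\mu,\nu)$ would have strictly smaller cost on $\{\pi^1_{x,u}\ne\pi^2_{x,u}\}$, contradicting optimality unless $\pi^1=\pi^2$. For the convergence $J(\pi^k)\to J(\pi^*)$, the equality case forced in the sandwich of (II) combined with strict convexity makes Jensen tight, so $P_{x,u}=\delta_{\hat\pi_{x,u}}=\delta_{\pi^*_{x,u}}$ for $\bar\mu$-a.e.\ $(x,u)$, hence $P=J(\pi^*)$; uniqueness of the subsequential limit then upgrades to convergence of the full sequence. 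The main obstacle is that weak convergence of $\pi^k$ does not in general imply weak convergence of the lifts $J(\pi^k)$, so I must work with both objects in parallel; the identification $\pi^*=\hat\pi$ between the weak limit of $\pi^k$ and the barycenter of any accumulation point $P$ of $J(\pi^k)$ is precisely what reconciles the two levels and lets Jensen on $P$ squeeze the limit cost from above.
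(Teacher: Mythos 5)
Your proof is correct and follows essentially the same architecture as the paper's (which actually proves the stronger Theorem \ref{thm:stability} in the $\mathcal P_{\bar f\oplus g}$-topology and specializes). Part (I) is identical: invoke the approximation theorem at an optimizer $\pi^\star$ and use continuity of $P\mapsto P(C)$ on the lift space to get the $\limsup$-inequality.

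Where you genuinely diverge is in the treatment of attainment and lower semicontinuity. The paper proves both by citing a pre-established fact (\cite[Proposition A.12 (b)]{BeJoMaPa21b}) that $\pi\mapsto\int C(x,u,\pi_{x,u})\,\proj_{1,2}\pi$ is lower semicontinuous on $\mathcal P_{\bar f\oplus g}$, combined with compactness of $\Pi_M(\bar\mu,\nu)$; the Jensen/barycenter argument on lifts is only deployed in part (ii). You instead re-derive the lower semicontinuity from scratch: extract a weak limit $P$ of $J(\pi^k)$, push the martingale constraint through the closedness of $\{\mean(\rho)=x\}$, form the barycentric coupling $\hat\pi=I(P)$, and apply Jensen. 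This unifies attainment, lower semicontinuity, and part (ii) under a single mechanism, making the argument more self-contained (you effectively inline the proof of the cited lsc result). You also identify the weak accumulation point $\pi^\star$ of $\pi^k$ with $\hat\pi$ by testing against functions of the form $g(x,u,\rho)=\int f(x,u,y)\,\rho(dy)$; the paper's shortcut is that $I\circ J=\id$ and $I$ is continuous, so $\pi^{k_j}=I(J(\pi^{k_j}))\to I(P)=\hat\pi$ directly, which is slightly cleaner but equivalent.

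One point to firm up: your tightness claim for $(J(\pi^k))_k$ in $\mathcal P_p(\R\times\U\times\mathcal P_p(\R))$ is stated but not really justified. Bounding $\int\W_p^p(\rho,\delta_{y_0})\,J(\pi^k)(d\rho)=\int|y-y_0|^p\,\nu^k(dy)$ controls $p$-th moments, but for $\mathcal W_p$-relative compactness you also need tightness of the third marginal as a sequence of measures on $(\mathcal P_p(\R),\W_p)$ together with uniform $p$-integrability of the lifted moments; these follow from $\nu^k\to\nu$ in $\mathcal P_p(\R)$ (via de la Vall\'ee-Poussin and a standard argument, see \cite[Lemma A.7]{BeJoMaPa21b} or \cite[Lemma 2.3]{BaBePa18}, which the paper cites precisely for this step). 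This is routine but should be referenced or spelled out. Your uniqueness argument via the midpoint coupling is correct and is actually more explicit than the paper's one-line appeal to strict convexity.
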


\subsection{VIX futures} \label{ssec:main.VIX}
The VIX is the implied volatility of the $30$-day variance swap on the S\&P 500. 
According to Guyon, Menegaux and Nutz \cite{GuMeNu17}, the subreplication price at time $0$ for the VIX future expiring at $T_1$ is given by
\begin{equation}
    \label{eq:VIX.sub.primal}
    P_{\rm sub}(\mu,\nu) = \sup \{ \mu(\phi) + \nu(\psi) \},
\end{equation}
where $\mu$ and $\nu$ denote the risk neutral distributions of the S\&P 500 at dates $T_1$ and $T_2$ equal to $T_1$ plus $30$ days both inferred from the market prices of liquid options. Moreover, the supremum is taken over all $(\phi,\psi) \in L^1(\mu) \times L^1(\nu)$ and measurable maps $\Delta^S, \Delta^L$ such that, for all $(x,u,y) \in (0,\infty)\times [0,\infty)\times(0,\infty)$,
\begin{equation}
    \label{eq:VIX.sub.constraint}
    \phi(x) + \psi(y) + \Delta^S(x,v) (y - x) + \Delta^L(x,u) \left(\ell_x(y) - u^2 \right) - u \le 0,
  \end{equation}
  with $\ell_x(y) := \frac{2}{T_2 - T_1}\ln(x/y)$. Up to assuming zero interest rates, the S\&P 500 is a martingale under the risk neutral measure so that both, $\mu$ and $\nu$, have finite first moments and $\mu$ is smaller than $\nu$ in the convex order.
To state the dual problem, we define the set $\Pi_{\rm VIX}(\mu,\nu)$ of admissible martingale couplings as
\[
  \{ \pi \in \mathcal P((0,\infty)\times [0,\infty)\times(0,\infty)) : \proj_{1} \pi = \mu, \, \proj_{3} \pi = \nu,\, \pi_{x,u}( \id,\ell_x) = (x,u^2) \text{ for } \proj_{1,2}\pi\text{-a.e.\ }(x,u) \},
\]
with $\id$ the identity function on $\R$.
Note that each $\pi\in\Pi_{\rm VIX}(\mu,\nu)$ satisfies $\pi\in\Pi_M(\proj_{1,2}\pi,\proj_3\pi)$ and we have, by concavity of the logarithm function and Jensen's inequality, for $\proj_{1,2}\pi$-a.e. $(x,u)$ that $\pi_{x,u}(\ell_x)\ge 0$.
Given probability measures $\mu,\nu$ on $(0,\infty)$ that are in the convex order and finitely integrate $|\ln(x)| + |x|$, the dual problem $D_{\rm sub}$ consists of
    \begin{equation}
        \label{eq:VIX.sub.dual}
        D_{\rm sub}(\mu,\nu) = \inf_{ \Pi_{\rm VIX}(\mu,\nu) } \int\sqrt{\pi_{x,u}(\ell_x)} \, \proj_{1,2} \pi(dx,du).
      \end{equation}
According to \cite[Theorem 4.1]{GuMeNu17}, the values of $P_{\rm sub}(\mu,\nu)$ and $D_{\rm sub}(\mu,\nu)$ coincide. In the present paper, we are going to establish the following stability result with respect to the risk-neutral marginal distributions $\mu$ and $\nu$ of the S\&P 500 at dates $T_1$ and $T_2$. 
\begin{proposition} \label{prop:stability.VIX}
    Let $(\mu^k,\nu^k)_{k \in \N}$, $\mu^k\le_{cx} \nu^k$, be a sequence in $\mathcal P((0,\infty)) \times \mathcal P((0,\infty))$ that converges weakly to $(\mu,\nu) \in \mathcal P((0,\infty)) \times \mathcal P((0,\infty))$.
    If $\lim_{k \to \infty} \int  \left(|\ln(x)| + |x|\right) \, \nu^k(dx) = \int \left(| \ln(x) | + |x|\right) \, \nu(dx)$, then
    \[
        \lim_{k \to \infty} D_{\rm sub}(\mu^k,\nu^k) = D_{\rm sub}(\mu,\nu).
    \]
  \end{proposition}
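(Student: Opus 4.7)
My plan is to prove the two inequalities $\liminf_k D_{\rm sub}(\mu^k, \nu^k) \ge D_{\rm sub}(\mu, \nu)$ and $\limsup_k D_{\rm sub}(\mu^k, \nu^k) \le D_{\rm sub}(\mu, \nu)$ separately, the latter relying on Theorem \ref{thm:intro.approximation}. Two preliminary observations are useful. First, since $\mu^k,\nu^k$ are supported on $(0,\infty)$, applying the convex order to the (both convex and concave) map $x \mapsto x$ yields $\mu^k(x) = \nu^k(x) \to \nu(x) = \mu(x)$, hence $\mu^k \to \mu$ in $\mathcal W_1$. Combined with $\ln_+(x) \le x$ and $\mu^k(\ln_-) \le \nu^k(\ln_-)$ (since $-\ln$ is convex), this gives $\sup_k \mu^k(|\ln|) < \infty$. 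Second, the VIX constraint $\pi_{x,u}(\ell_x) = u^2$ allows rewriting the cost as $\int u \, \proj_{1,2}\pi(dx, du)$, and yields the identity $\int u^2 \, \proj_{1,2}\pi(dx, du) = \frac{2}{T_2 - T_1}(\mu(\ln) - \nu(\ln))$ for any $\pi \in \Pi_{\rm VIX}(\mu, \nu)$.

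For the lower bound, pick near-optimal $\pi^k \in \Pi_{\rm VIX}(\mu^k, \nu^k)$. The identity above, applied to $(\mu^k, \nu^k)$, gives a uniform bound on the second moment of the $u$-marginal, which combined with tightness of $\mu^k$ and $\nu^k$ yields tightness of $(\pi^k)_k$. Extract a weak limit $\pi$. Its first and third marginals are $\mu$ and $\nu$, while the martingale identity $\pi_{x,u}(\id) = x$ and the VIX identity $\pi_{x,u}(\ell_x) = u^2$ pass to the limit after testing against bounded continuous functions of $(x, u)$, using uniform integrability of $y$ under $\pi^k$ (from $\nu^k \to \nu$ in $\mathcal W_1$) and of $\ln y$ under $\pi^k$ (from $\nu^k(|\ln|) \to \nu(|\ln|)$). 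Thus $\pi \in \Pi_{\rm VIX}(\mu, \nu)$, and since $u \ge 0$, Fatou's lemma yields $\liminf_k \int u \, \proj_{1,2}\pi^k \ge \int u \, \proj_{1,2}\pi \ge D_{\rm sub}(\mu, \nu)$.

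For the upper bound, pick $\pi^\ast \in \Pi_{\rm VIX}(\mu, \nu)$ optimal (existence follows by the above tightness argument) and set $\bar\mu := \proj_{1,2}\pi^\ast$. Given an optimal $\mathcal W_1$-coupling $\gamma^k$ of $(\mu, \mu^k)$ with disintegration $\gamma^k(dx, dx') = \mu(dx)\gamma^k_x(dx')$, define $\bar\mu^k(dx', du) := \int \mu(dx) \bar\mu_x(du) \gamma^k_x(dx')$, so that $\proj_1 \bar\mu^k = \mu^k \le_{cx} \nu^k$ and $\bar\mu^k \to \bar\mu$ in $\mathcal W_1((0,\infty) \times [0,\infty))$. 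Apply Theorem \ref{thm:intro.approximation} to obtain $\pi^k \in \Pi_M(\bar\mu^k, \nu^k)$ with $J(\pi^k) \to J(\pi^\ast)$ weakly. To enforce the VIX constraint, set $f^k(x,u) := \sqrt{\pi^k_{x,u}(\ell_x)}$, which is nonnegative by Jensen's inequality (concavity of $\ln$) together with the martingale property, and put $\tilde\pi^k := ((x, u, y) \mapsto (x, f^k(x, u), y))_\# \pi^k$; iterated conditional expectations show $\tilde\pi^k \in \Pi_{\rm VIX}(\mu^k, \nu^k)$ with cost $\int \sqrt{\pi^k_{x,u}(\ell_x)} \, \proj_{1,2}\pi^k(dx, du)$. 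The hard part is to show this last integral converges to $\int u \, \proj_{1,2}\pi^\ast$: the integrand $H(x, \rho) = \sqrt{(2/(T_2-T_1))(\ln x - \rho(\ln))}$ is continuous on $(0,\infty) \times \mathcal P_1((0,\infty), \tilde d)$ with $\tilde d(y,y') := |y-y'| + |\ln y - \ln y'|$, under which $\ln$ is $1$-Lipschitz. The hypothesis $\nu^k(|\ln|) \to \nu(|\ln|)$ gives $\nu^k \to \nu$ in $\mathcal W_1^{\tilde d}$, which, through a refinement of Theorem \ref{thm:intro.approximation} run with this stronger metric on the $\nu$-side, upgrades the convergence of $J(\pi^k) \to J(\pi^\ast)$ to $\mathcal P_1(\X \times \U \times \mathcal P_1^{\tilde d}(\Y))$; the uniform second-moment bound from the preliminaries together with dominated convergence then close the argument.
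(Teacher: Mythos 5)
Your proof takes a genuinely different route from the paper's. The paper establishes the exact identity $D_{\rm sub}(\mu,\nu) = -\hat V_{C_{\rm VIX}}(\mu,\nu)$, where $\hat V_{C_{\rm VIX}}$ is the $\Lambda_M$-optimization problem \eqref{eq:WMOT} with $C_{\rm VIX}(x,\rho) = -\sqrt{\rho(\ell_x) \vee 0}$, and then simply cites the already-proved stability result Proposition \ref{thm:wmot.stabilitys} with $f = g = |\ln| + |\cdot|$. The abstraction there is that in $\Pi_{\rm VIX}$ the coordinate $u$ is a deterministic function $\sqrt{\rho(\ell_x)}$ of the conditional law $\rho = \pi_{x,u}$, so the $u$-coordinate can be forgotten entirely and the problem recast over $\Lambda_M(\mu,\nu)$. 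Your "relabeling" map $(x,u,y) \mapsto (x, f^k(x,u), y)$ is exactly the content of that observation, but you keep the $u$-coordinate throughout, prove the two inequalities from the definitions, and re-derive the compactness and approximation arguments rather than reusing the general theorem. Both approaches work, but the paper's reduction shortcuts the tightness/Fatou step for the lower bound and avoids building $\bar\mu^k$ by hand.

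There is one genuine gap. For the upper bound you need $J(\pi^k) \to J(\pi^\ast)$ in a topology strong enough to pass the cost $H(x,\rho) = \sqrt{\tfrac{2}{T_2-T_1}(\ln x - \rho(\ln))}$ to the limit. Since $H$ grows like $\sqrt{|\ln x| + \rho(|\ln|)}$, you must have the refined convergence on \emph{both} marginal sides: not only $\nu^k(|\ln|) \to \nu(|\ln|)$ (which is hypothesized), but also $\bar\mu^k(|\ln x|) \to \bar\mu(|\ln x|)$, i.e.\ $\mu^k(|\ln|) \to \mu(|\ln|)$. Your preliminaries only establish $\sup_k \mu^k(|\ln|) < \infty$, which is not enough. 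The convergence $\mu^k(|\ln|) \to \mu(|\ln|)$ does follow from the hypotheses, but requires an argument: for $\ln_-$, use that $\ln_-$ is convex, so by de la Vallée Poussin take a convex nondecreasing $G$ with superlinear growth and $\sup_k \nu^k(G\circ\ln_-)<\infty$, then $G\circ\ln_-$ is convex so $\mu^k(G\circ\ln_-) \le \nu^k(G\circ\ln_-)$ gives uniform integrability of $\ln_-$ under $(\mu^k)_k$; for $\ln_+$, use $\ln_+ \le |\cdot|$ and the $\mathcal W_1$-convergence $\mu^k\to\mu$. Once this is supplied, you should also invoke the strengthened Theorem \ref{thm:approximation} (with growth functions) rather than Theorem \ref{thm:intro.approximation}, since the latter only gives weak convergence; and the "dominated convergence" phrasing should be replaced by the uniform-integrability argument that the convergence in $\mathcal P_{\bar f \oplus \hat g}$ implies $\int \varphi \, dJ(\pi^k) \to \int \varphi \, dJ(\pi^\ast)$ for any continuous $\varphi$ dominated by $\bar f \oplus \hat g$ (the measures change with $k$, so ordinary dominated convergence does not apply).
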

  The analogous stability result for the VIX future superreplication price is stated in \cite[Theorem 1.3]{BeJoMaPa21b} and relies on the reduction of its dual formulation to the value function of a WMOT problem, see \cite[Proposition 4.10]{GuMeNu17}.
  Such a reduction step is, in general, not possible for the dual formulation of the subreplication price and we remark that with the approach in this paper, one can recover \cite[Theorem 1.3]{BeJoMaPa21b} without recasting the problem as a WMOT problem.

  \subsection{Filtered processes} \label{ssec:main.process}
As explained in the introduction, in the robust approach it is natural to consider all martingales that are compatible with market observations.
  For this reason, we follow the approach in \cite{BaBePa21}, and call in our setting a 5-tuple
  \[
      \fp X = \left( \Omega, \mathcal F, \mathbb P, (\mathcal F_t)_{t = 1}^2, X = (X_t)_{t = 1}^2 \right),
  \]
  consisting of a filtered probability space $(\Omega, \mathcal F, \mathbb P, (\mathcal F_t)_{t = 1}^2)$ and an $(\mathcal F_t)$-adapted process $X$, a filtered process.
  We say that a filtered process $\fp X$ is a martingale if $X$ is a $(\mathcal F_t)$-martingale under $\mathbb P$. When ${\cal F}_1$ is larger than the $\sigma$-field generated by $X_1$, the conditional distributions $\law( X_2 | \mathcal F_1 )$ and $\law(X_2|X_1)$ may differ and then $\law( X_2 | \mathcal F_1 )$ is not determined by the law of $X$. 
  For $\mu,\nu \in \mathcal P_p(\R)$ with $\mu\le_{cx}\nu$, we write $\mathbf M(\mu,\nu)$ for the set of all martingales $\fp X$ with $X_1 \sim \mu$ and $X_2 \sim \nu$.

  Assume that the payoff $\Phi$ of the option, which we want to price, depends on the conditional law $\law( X_2 | \mathcal F_1)$ in addition to the price of the asset $X$: $\Phi$ is a measurable function with domain $\R \times \R\times \mathcal P_p(\R) $ satisfying $|\Phi(x_1,x_2,\rho)|\le K \left( 1 + |x_1|^p+ |x_2|^p+ \int_\R|y|^p\, \rho(d y) \right)$ for some $K>0$. The robust price bound is given by
 \begin{equation}V_\Phi(\mu, \nu)=\sup_{\fp X \in \mathbf M(\mu,\nu)} \mathbb E[\Phi(X,\law(X_2 | \mathcal F_1))].\label{eq:WMOT.process}\end{equation}
For $C \colon \R \times \mathcal P_p(\R)\ni(x,\rho) \mapsto \int\Phi(x,y,\rho)\, \rho(dy)\in \R$, one has
  \begin{equation*}
      \label{eq:Phi.to.C}
      \mathbb E[\Phi(X, \law(X_2 | \mathcal F_1)) | \mathcal F_1] = C(X_1, \law(X_2|\mathcal F_1)).
    \end{equation*}
We connect the robust price bounds to the optimization problem
\begin{align}
      \label{eq:WMOT}
       \hat V_C(\mu,\nu) := \sup_{ P \in \Lambda_M(\mu,\nu) } P(C).
 \end{align}
Indeed, the values of \eqref{eq:Phi.to.C} and \eqref{eq:WMOT} coincide:
For $\fp X \in \mathbf M(\mu,\nu)$, we have $\law(X_1,\law(X_2|\mathcal F_1))\in\Lambda_M(\mu,\nu)$ and therefore $V_\Phi(\mu, \nu)\le \hat V_C(\mu,\nu)$.
On the other hand, for $P\in\Lambda_M(\mu,\nu)$, let us endow $\R\times{\cal P}_p(\R)\times [0,1]$ with $P\otimes\Leb$ and set $(X_1,X_2) \colon \R\times{\cal P}_p(\R)\times [0,1]\ni(x,\rho,u)\mapsto (x,F_\rho^{-1}(u))\in\R^2$. Denoting by ${\cal F}_1$ and ${\cal F}_2$ the respective Borel sigma-field on $\R\times{\cal P}_p(\R)$ and $\R\times{\cal P}_p(\R)\times [0,1]$, we have that
\[ 
    (\R\times{\cal P}_p(\R)\times [0,1],P\otimes\Leb,{\cal F}_2,({\cal F}_t)_{t=1}^2,(X_t)_{t = 1}^2) \in \mathbf M(\mu,\nu).
\]
Therefore, we also find $V_\Phi(\mu, \nu) \ge \hat V_C(\mu,\nu)$.

In the current setting, we derive the following analogue to Theorem \ref{thm:intro.approximation}.
  
  \begin{corollary}
      \label{cor:intro.approximation.process}
      Let $(\mu^k,\nu^k)_{k \in \N}$, $\mu^k \le_{cx} \nu^k$, be a convergent sequence in $\mathcal P_p(\R) \times \mathcal P_p(\R)$ with limit $(\mu,\nu)$.
      Then, every $P\in\Lambda_M(\mu,\nu)$ is the $\W_p$-limit of a sequence $(P^k)_{k \in \N}$ with $P^k\in\Lambda_M(\mu^k,\nu^k)$.
    \end{corollary}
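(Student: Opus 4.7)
The plan is to reduce to Theorem \ref{thm:intro.approximation} by taking the extra-parameter space there to be $\mathcal P_p(\R)$ itself, so that $P \in \Lambda_M(\mu,\nu)$ is identified with a diagonal coupling in a suitable $\Pi_M$-space via the injection $J$. Concretely, set $\U := \mathcal P_p(\R)$ (equipped with $\mathcal W_p$) and view $\bar\mu := P$ as an element of $\mathcal P_p(\R \times \U)$; finite $p$-moments follow from $\int \mathcal W_p^p(\rho, \delta_0)\,dP(x,\rho) = \int|y|^p\,d\nu(y) < \infty$ and $\int |x|^p\,dP = \int |x|^p\,d\mu < \infty$. Define
\[
    \pi(dx, d\rho, dy) := \bar\mu(dx, d\rho)\, \rho(dy).
\]
Then $\pi \in \Pi_M(\bar\mu, \nu)$ (the marginal and mean conditions are immediate from the definition of $\Lambda_M$), and $J(\pi)$ is supported on the ``diagonal'' $\{(x, \rho, \rho)\}$, so its push-forward under $(x, \rho, \rho') \mapsto (x, \rho')$ recovers $P$.

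Next I would build approximating marginals: disintegrate $P(dx, d\rho) = \mu(dx)\, P_x(d\rho)$, pick an optimal $\mathcal W_p$-coupling $\gamma^k \in \Pi(\mu^k, \mu)$, and set $\bar\mu^k(dx, d\rho) := \int \gamma^k(dx, dx')\, P_{x'}(d\rho)$. Then $\proj_1 \bar\mu^k = \mu^k \le_{cx} \nu^k$, and the gluing $\gamma^k(dx, dx')\, P_{x'}(d\rho)\, \delta_\rho(d\rho')$, viewed as a coupling between $\bar\mu^k$ and $\bar\mu$, shows $\mathcal W_p(\bar\mu^k, \bar\mu) \le \mathcal W_p(\mu^k, \mu) \to 0$. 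Theorem \ref{thm:intro.approximation} then produces $\pi^k \in \Pi_M(\bar\mu^k, \nu^k)$ with $\pi^k \to \pi$ and $J(\pi^k) \to J(\pi)$ weakly, and I set $P^k := \bigl((x, u, \rho) \mapsto (x, \rho)\bigr)_\# J(\pi^k)$. The martingale property of $\pi^k$ directly yields $P^k \in \Lambda_M(\mu^k, \nu^k)$.

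It remains to upgrade weak convergence to $\mathcal W_p$: the $p$-moments of $J(\pi^k)$ split into those of $\bar\mu^k$, which converge since $\bar\mu^k \to \bar\mu$ in $\mathcal W_p$, and $\int \mathcal W_p^p(\rho, \delta_0)\, dJ(\pi^k) = \int |y|^p\, d\nu^k(y) \to \int |y|^p\, d\nu$, so convergence of $p$-moments together with weak convergence gives $J(\pi^k) \to J(\pi)$ in $\mathcal W_p$ and hence $P^k \to P$ in $\mathcal W_p$ by $1$-Lipschitzness of the marginal projection. The main obstacle is conceptual rather than technical: recognizing that $\U = \mathcal P_p(\R)$ is the right parameter space, so that $P$ corresponds under $J$ to a diagonal coupling in $\Pi_M(\bar\mu, \nu)$ and Theorem \ref{thm:intro.approximation} applies. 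The remaining gluing and moment-convergence arguments are standard.
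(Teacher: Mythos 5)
Your proof is correct, but it takes a genuinely different route from the paper's. The paper sets $\U = [0,1]$ with the Lebesgue measure, so $\bar\mu^k = \mu^k \otimes \Leb$ is a product measure, and invokes a randomization lemma (Kallenberg, Lemma 3.22) to produce a measurable kernel $(x,u)\mapsto\pi_{x,u}$ with $(\pi_{x,u})_\#\Leb = P_x$, thereby turning $P$ into a coupling $\pi\in\Pi_M(\bar\mu,\nu)$; the map $P^k$ is then obtained by pushing $J(\pi^k)$ forward along $(x,u,\rho)\mapsto(x,\rho)$, exactly as you do. You instead take $\U = \mathcal P_p(\R)$ itself, set $\bar\mu = P$, and notice that $\pi(dx,d\rho,dy) = P(dx,d\rho)\rho(dy)$ lies in $\Pi_M(\bar\mu,\nu)$ with $J(\pi)$ concentrated on the diagonal $\{(x,\rho,\rho)\}$. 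This avoids the randomization lemma, at the cost of a gluing construction to produce the approximating first marginals $\bar\mu^k = \int\gamma^k(dx,dx')P_{x'}(d\rho)$ with $\mathcal W_p(\bar\mu^k,\bar\mu)\le\mathcal W_p(\mu^k,\mu)$. The paper's choice makes the marginal sequence $\bar\mu^k$ trivially constructible (product measures over a fixed compact $\U$), which is why no gluing is needed there; your choice is arguably more canonical given the meaning of $\Lambda_M$, but uses the more abstract parameter space $\mathcal P_p(\R)$. Your moment-convergence step is also sound: the relevant $p$-th moment of $J(\pi^k)$ decomposes as $\int|x|^p\,d\mu^k + \int\mathcal W_p^p(u,\delta_0)\,d\bar\mu^k + \int|y|^p\,d\nu^k$, and the middle term equals the constant $\int|y|^p\,d\nu$ by construction of $\bar\mu^k$, so all three converge and weak convergence upgrades to $\mathcal W_p$ by the standard Villani criterion. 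Note that the paper actually proves the stronger Corollary \ref{cor:approximation.process} in the refined topologies $\mathcal P_f(\X)\times\mathcal P_g(\Y)$; your argument covers precisely the $\mathcal P_p(\R)$ case stated in Corollary \ref{cor:intro.approximation.process}.
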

 \begin{remark} \label{rem:intro.approximation}
   The adapted Wasserstein distance between two filtered processes $\fp X$ and $\fp Y$ is, by \cite[Theorem 3.10]{BaBePa21}, given by
  \begin{align*}
      \mathcal{AW}_p(\fp X, \fp Y) = \mathcal W_p( \law( X_1, \law( X_2 | \mathcal F_1^\fp X)) , \law( Y_1, \law( Y_2 | \mathcal F_1^\fp Y)) ).
  \end{align*}
  Consequently, the map $\fp X \mapsto \law( X_1, \law(X_2 | \mathcal F_1))$ is a surjective isometry from $\mathbf M(\mu,\nu)$ onto $\Lambda_M(\mu,\nu)$. 
  Therefore, we may rephrase Corollary \ref{cor:intro.approximation.process} using $\mathcal{AW}_p$, and obtain under the same assumptions that every process $\fp X\in \mathbf M(\mu,\nu)$ is the $\mathcal{AW}_p$-limit of a sequence of processes $(\fp X^k)_{k\in\N}$ with $\fp X^k\in \mathbf M(\mu^k,\nu^k)$.
  \end{remark} 
  Similar to Theorem \ref{thm:intro.stability} we get stability of \eqref{eq:WMOT}.
  \begin{proposition}
      \label{prop:intro.stability.process}
      Let $C \colon \R \times \mathcal P_p(\R) \to \R$ be continuous and assume that there is a constant $K > 0$ such that, for all $(x,\rho) \in \R \times \mathcal P_p(\R)$,
      \[
          |C(x,\rho)| \le K \left( 1 + |x|^p+ \int_\R|y|^p\rho(d y) \right).
      \]
      Then the value $\hat V_C$ is attained and continuous on $\{ (\mu,\nu) \in \mathcal P_p(\R)\times\mathcal P_p(\R) : \mu \le_{cx} \nu \}$.
    Moreover, if $(\mu^k,\nu^k)_{k\in\N}$, $\mu^k\le_{cx}\nu^k$ is convergent in $\mathcal P_p(\R)\times\mathcal P_p(\R)$ and $(P^k)_{k \in \N}$, $P^k \in \Lambda_M(\mu^k,\nu^k)$ is a sequence of optimizers of \eqref{eq:WMOT}, then so are its accumulation points.\end{proposition}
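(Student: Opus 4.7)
The plan is to combine the approximation result Corollary \ref{cor:intro.approximation.process} with standard compactness arguments to establish attainment together with both semicontinuities of $\hat V_C$; the statement about accumulation points of optimizers then follows. The main tool throughout is that for any $P \in \Lambda_M(\mu,\nu)$ the marginal constraints force
\[
    \int \left( |x|^p + \int |y|^p\, \rho(dy) \right) P(dx,d\rho) = \int |x|^p\, \mu(dx) + \int |y|^p\, \nu(dy),
\]
which controls the $p$-th moment of $P$ relative to the reference point $(0,\delta_0) \in \R \times \mathcal P_p(\R)$ purely in terms of the marginals, and that $P\mapsto P(C)$ is $\mathcal W_p$-continuous by the $p$-growth bound on $C$.

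For attainment, fix $(\mu,\nu)$ with $\mu \le_{cx} \nu$. The displayed identity, together with tightness of $\mu$ and $\nu$, makes $\Lambda_M(\mu,\nu)$ tight with uniformly integrable $p$-th distance, hence $\mathcal W_p$-relatively compact. The constraints defining $\Lambda_M(\mu,\nu)$ pass to $\mathcal W_p$-limits: $\proj_1$ is continuous, the barycentric map $P \mapsto \int \rho\, P(dx,d\rho)$ is $\mathcal W_p$-continuous, and $\{(x,\rho) : \mean(\rho) = x\}$ is closed in $\R \times \mathcal P_p(\R)$. Hence $\Lambda_M(\mu,\nu)$ is $\mathcal W_p$-compact, a maximizing sequence has an accumulation point $P^* \in \Lambda_M(\mu,\nu)$, and $P \mapsto P(C)$ continuous gives $P^*(C) = \hat V_C(\mu,\nu)$.

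For lower semicontinuity of $\hat V_C$, given $(\mu^k,\nu^k) \to (\mu,\nu)$ and any $P \in \Lambda_M(\mu,\nu)$, Corollary \ref{cor:intro.approximation.process} produces $P^k \in \Lambda_M(\mu^k,\nu^k)$ with $P^k \to P$ in $\mathcal W_p$, and continuity yields $\liminf_k \hat V_C(\mu^k,\nu^k) \ge P(C)$; taking the supremum over $P$ gives $\liminf_k \hat V_C(\mu^k,\nu^k) \ge \hat V_C(\mu,\nu)$. For the opposite bound and the stability of optimizers, let $P^k \in \Lambda_M(\mu^k,\nu^k)$ be optimal (attainment was just proved). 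The moment identity with marginals $(\mu^k,\nu^k)$ yields a bounded and convergent $p$-th moment, so $(P^k)$ is $\mathcal W_p$-relatively compact. Any accumulation point $P^*$ lies in $\Lambda_M(\mu,\nu)$ by closedness of the constraints, and continuity of $P \mapsto P(C)$ gives $\limsup_k \hat V_C(\mu^k,\nu^k) = P^*(C) \le \hat V_C(\mu,\nu)$. Combining with the previous step, $\hat V_C(\mu^k,\nu^k) \to \hat V_C(\mu,\nu) = P^*(C)$, so $P^*$ is optimal.

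The main technical point is verifying that $(P^k)$ is relatively compact in $\mathcal W_p(\R \times \mathcal P_p(\R))$ when only the marginals are moving: tightness of $\proj_1 P^k = \mu^k$ is immediate, but tightness of the $\mathcal P_p(\R)$-valued marginal $\proj_2 P^k$ is delicate because it lives on a measure space. I would handle this via the Markov-type estimate
\[
    \proj_2 P^k \bigl\{\rho : \int |y|^p\, \rho(dy) > R\bigr\} \le R^{-1} \int |y|^p\, \nu^k(dy),
\]
combined with the standard characterization of $\mathcal W_p$-compact subsets of $\mathcal P_p(\R)$ as tight sets with uniformly integrable $p$-th moment; both ingredients are inherited from the $\mathcal W_p$-convergence $\nu^k \to \nu$, and the same inputs deliver the uniform integrability of the $p$-th distance needed to upgrade weak to $\mathcal W_p$ convergence.
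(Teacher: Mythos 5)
Your overall strategy coincides with the paper's: establish attainment via compactness of $\Lambda_M(\mu,\nu)$ and continuity of $P\mapsto P(C)$, prove the ``upper'' semicontinuity direction by approximating an optimizer for the limit pair using Corollary~\ref{cor:intro.approximation.process}, and obtain the ``lower'' direction plus the accumulation--point statement by relative compactness of the optimizers $P^k$. The paper does exactly this, the only difference being that it proves the result in the slightly more general $\mathcal P_f/\mathcal P_g$ framework (Proposition~\ref{thm:wmot.stabilitys}), and, crucially, it does not re-derive the compactness of $\Lambda_M(\mu,\nu)$ and the relative compactness of $(P^k)_k$ from scratch, but simply invokes~\cite[Lemma~A.7]{BeJoMaPa21b}.

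This is precisely where your sketch has a genuine gap. You propose to obtain the $\mathcal W_p$-relative compactness of $(\proj_2 P^k)_k$ in $\mathcal P_p(\mathcal P_p(\R))$ via the single Markov bound $\proj_2 P^k\{\rho : \int |y|^p\,\rho(dy)>R\}\le R^{-1}\int|y|^p\,\nu^k(dy)$ together with the characterization of $\mathcal W_p$-compact subsets of $\mathcal P_p(\R)$. But the sublevel set $\{\rho : \int|y|^p\rho(dy)\le R\}$ is \emph{not} $\mathcal W_p$-compact in $\mathcal P_p(\R)$ (take $\rho_n=(1-n^{-1})\delta_0 + n^{-1}\delta_{(nR)^{1/p}}$, which has bounded $p$-th moment but no $\mathcal W_p$-convergent subsequence), so Markov at a single radius does not produce the tight exhaustion you need. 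The repair is to apply Markov at a sequence of radii $R_n\to\infty$, using the fact that $\nu^k\to\nu$ in $\mathcal W_p$ gives $\sup_k\int_{|y|>R}|y|^p\,\nu^k(dy)\to 0$ as $R\to\infty$, and to build a genuinely compact set $K=\{\rho : \int_{|y|>R_n}|y|^p\rho(dy)\le \epsilon_n \ \forall n\}$ (with $\epsilon_n\downarrow 0$) whose complement has small mass. Likewise, the uniform integrability of $|x|^p+\int|y|^p\rho(dy)$ is not immediate; one must first use the constraint $\mean(\rho)=x$ and Jensen to reduce to controlling $\int_{\{\int|y|^p\rho(dy)>R\}}\int|y|^p\rho(dy)\,P^k(dx,d\rho)$, and then split $\int|y|^p\rho(dy)$ over $\{|y|\le L\}$ and $\{|y|>L\}$, using once more the uniform integrability of the $p$-th moments of $(\nu^k)_k$. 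These are exactly the manipulations packaged in~\cite[Lemma~A.7]{BeJoMaPa21b}. Finally, a minor presentational point: the line ``continuity of $P\mapsto P(C)$ gives $\limsup_k\hat V_C(\mu^k,\nu^k)=P^*(C)$'' is not literally correct for an arbitrary accumulation point $P^*$ \emph{before} convergence of the values is established; one should first extract the subsequence achieving the $\limsup$, take a further convergent subsubsequence to get $\limsup_k\hat V_C(\mu^k,\nu^k)\le\hat V_C(\mu,\nu)$, and only then conclude that every accumulation point is optimal.
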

    As in Remark \ref{rem:intro.approximation}, it is possible to phrase Proposition \ref{prop:intro.stability.process} in the language of filtered processes.
    Since the map $\R \times \mathcal P_p(\R)\ni(x,\rho) \mapsto \delta_{x} \otimes \rho \otimes \delta_\rho \in{\cal P}_p(\R\times \R\times{\cal P}_p(\R))$ is continuous, adequate continuity and growth assumptions on $\Phi$ will imply that $C(x,\rho) := \delta_{x} \otimes \rho \otimes \delta_\rho(\Phi)$ satisfies the assumptions of Proposition \ref{prop:intro.stability.process}.
    Hence, we can deduce the following stability result for \eqref{eq:WMOT.process}.
\begin{corollary}\label{cor:filpro}
   Let $\Phi \colon \R \times \R \times \mathcal P_p(\R) \to \R$ be continuous and assume that there is a constant $K > 0$ such that, for all $(x_1,x_2,\rho) \in \R \times\R \times  \mathcal P_p(\R)$,
      \[
          |\Phi(x_1,x_2,\rho)|\le K \left( 1 + |x_1|^p+ |x_2|^p+ \int_\R|y|^p\rho(d y) \right).
        \]
    Then the value $V_\Phi$ is attained and continuous on $\{ (\mu,\nu) \in \mathcal P_p(\R)\times\mathcal P_p(\R) : \mu \le_{cx} \nu \}$.   
\end{corollary}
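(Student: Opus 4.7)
The plan is to apply Proposition \ref{prop:intro.stability.process} to the cost
\[ C(x,\rho) := \int_\R \Phi(x,y,\rho)\,\rho(dy) = (\delta_x \otimes \rho \otimes \delta_\rho)(\Phi), \]
and then transfer the conclusions to $V_\Phi$ via the identity $V_\Phi(\mu,\nu) = \hat V_C(\mu,\nu)$ already established in the paragraph following \eqref{eq:WMOT}. So the task reduces to (a) verifying that this $C$ satisfies the continuity and $p$-growth hypothesis of Proposition \ref{prop:intro.stability.process}, and (b) lifting an optimizer of $\hat V_C$ to an optimizer of $V_\Phi$.

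For the growth bound, I would simply integrate the hypothesis on $\Phi$ against $\rho$:
\begin{align*}
  |C(x,\rho)| &\le K \int_\R \Bigl( 1 + |x|^p + |y|^p + \int_\R |z|^p\,\rho(dz) \Bigr) \rho(dy) \\
  &= K \Bigl( 1 + |x|^p + 2 \int_\R |y|^p\,\rho(dy) \Bigr) \le 2K \Bigl( 1 + |x|^p + \int_\R |y|^p\,\rho(dy) \Bigr),
\end{align*}
which is exactly the bound required in Proposition \ref{prop:intro.stability.process}. For continuity, the excerpt has already noted that $(x,\rho)\mapsto \delta_x \otimes \rho \otimes \delta_\rho$ is continuous as a map $\R \times \mathcal P_p(\R) \to \mathcal P_p(\R \times \R \times \mathcal P_p(\R))$. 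Combining this with the continuity of $\Phi$ and its $p$-polynomial growth, the standard continuity of integration of continuous functions with $p$-growth against $\mathcal P_p$-convergent sequences yields that $C$ is continuous on $\R \times \mathcal P_p(\R)$.

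With (a) in hand, Proposition \ref{prop:intro.stability.process} gives that $\hat V_C$ is continuous on $\{(\mu,\nu): \mu \le_{cx} \nu\} \subseteq \mathcal P_p(\R)^2$ and is attained, so the identity $V_\Phi = \hat V_C$ transfers continuity to $V_\Phi$. For attainment, I would take an optimizer $P^* \in \Lambda_M(\mu,\nu)$ of $\hat V_C(\mu,\nu)$ and apply the explicit construction reproduced in the excerpt: equip $\R \times \mathcal P_p(\R) \times [0,1]$ with $P^* \otimes \Leb$ and set $X_1(x,\rho,u) := x$, $X_2(x,\rho,u) := F_\rho^{-1}(u)$, together with the natural filtration. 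The resulting $\fp X \in \mathbf M(\mu,\nu)$ satisfies $\law(X_2 \mid \mathcal F_1) = \rho$ a.s., and hence
\[ \E[\Phi(X_1,X_2,\law(X_2 \mid \mathcal F_1))] = \int \Bigl( \int \Phi(x,y,\rho)\,\rho(dy) \Bigr) P^*(dx,d\rho) = P^*(C) = \hat V_C(\mu,\nu), \]
so $\fp X$ attains $V_\Phi(\mu,\nu)$. There is no genuine obstacle here: the heavy lifting—stability of the auxiliary weak problem $\hat V_C$—has been done in Proposition \ref{prop:intro.stability.process}, and the only mild point of care is the double occurrence of $\rho$ in $C$, which is precisely why one must work in $\mathcal P_p(\R)$ rather than $\mathcal P(\R)$ to make the continuity argument for $C$ go through.
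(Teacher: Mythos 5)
Your proposal is correct and follows exactly the route the paper sketches: define $C(x,\rho)=(\delta_x\otimes\rho\otimes\delta_\rho)(\Phi)$, check the growth bound by integrating the hypothesis on $\Phi$ and continuity via continuity of $(x,\rho)\mapsto\delta_x\otimes\rho\otimes\delta_\rho$, then invoke Proposition \ref{prop:intro.stability.process} together with the identity $V_\Phi=\hat V_C$ and the explicit lift of an optimizer $P^\star$ to a filtered process. The paper leaves these verifications implicit; you have merely spelled them out.
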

\subsection{American options}\label{ssec:main.American}
      The robust pricing problem of American options as considered by Hobson and Norgilas \cite{HoNo19b}, can be cast in the setting of Subsection \ref{ssec:main.process}.
      Given a filtered process $\fp X$, the filtration $(\mathcal F_t)$ models the information that is available to the buyer, who may exercise at only two possible dates, $t\in\{1,2\}$. 
      For $t\in\{1,2\}$, let $\Phi_t \colon \R^t \to \R$ be a path-dependent payoff that she receives when exercising at time $t$. 
      The model-independent price of this American option is given by
  \begin{equation}
      \label{eq:opt.stop}
      {\cal A}m(\mu,\nu)=\sup_{\fp X \in \mathbf M(\mu,\nu)} \price(\Phi; \fp X).
  \end{equation}
  As the buyer can exercise the option at any (stopping) time, the price crucially depends on the information that is available to the buyer and we have that the price of $\Phi$ is given by
  \begin{equation}
      \label{eq:opt.stop.price}
      \price(\Phi;\fp X) := \sup_{\tau \, \mathcal (\mathcal F_t)\text{-stopping time} } \mathbb E_\mathbb P\left[ \mathbbm 1_{\{ \tau = 1 \}} \Phi_1(X_1) + \mathbbm 1_{\{ \tau = 2\}} \Phi_2(X) \right].
  \end{equation}
  In the case of a Put, that is ($\Phi_1(x)=(K_1-x)^+$ and $\Phi_2(x,y)=(K_2-y)^+$),  Hobson and Norgilas \cite{HoNo19b} relate the above suprema to the left-curtain martingale coupling \cite{BeJu16} when $\mu$ does not weight points. 
  By the Snell-envelope theorem, we have that
  \[
      \price(\Phi; \fp X) = \mathbb E\left[ \max \left( \Phi_1(x), \mathbb E[ \Phi_2(x,y) | \mathcal F_1] \,  \right) \right],
  \]
  which allows us to apply here Proposition \ref{prop:intro.stability.process} with $C(x,\rho) := \max(\Phi_1(x), \int \Phi_2(x,y) \, \rho(dy))$, and deduce the following stability result:
  \begin{corollary} \label{prop:contam}
     Let $\Phi_1$ and $\Phi_2$ be continuous and $\sup_{(x,y)\in\R^2}\left(\frac{\Phi_1(x)}{1+|x|^p}+\frac{\Phi_2(x,y)}{1+|x|^p+|y|^p}\right)<\infty$.
     Then the model independent price ${\cal A}m$ is continuous on $\{ (\mu,\nu) \in \mathcal P_p(\R)^2 : \mu \le_{cx} \nu \}$.
  \end{corollary}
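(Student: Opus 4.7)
The plan is to realize ${\cal A}m(\mu,\nu)$ as a value of the form $\hat V_C(\mu,\nu)$ treated in Proposition \ref{prop:intro.stability.process} by setting
\[
C(x,\rho) := \max\left(\Phi_1(x),\, \int \Phi_2(x,y)\,\rho(dy)\right),
\]
and then checking that $C$ meets the regularity and growth hypotheses of that proposition. The link is supplied in the paragraph preceding the corollary: the Snell envelope identity gives $\price(\Phi;\fp X) = \E[C(X_1,\law(X_2\,|\,\mathcal F_1))]$ for every $\fp X \in \mathbf M(\mu,\nu)$, and the surjective isometry $\fp X \mapsto \law(X_1,\law(X_2\,|\,\mathcal F_1))$ from $\mathbf M(\mu,\nu)$ onto $\Lambda_M(\mu,\nu)$ recalled in Remark \ref{rem:intro.approximation} yields ${\cal A}m(\mu,\nu) = \hat V_C(\mu,\nu)$.

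The growth bound $|C(x,\rho)| \le K(1 + |x|^p + \int |y|^p\,\rho(dy))$ will follow readily from the assumption $\sup_{(x,y)\in\R^2}\bigl(\Phi_1(x)/(1+|x|^p) + \Phi_2(x,y)/(1+|x|^p+|y|^p)\bigr) < \infty$, which provides matching upper bounds on $\Phi_1$ and $\Phi_2$; a corresponding lower bound on $C$ is then automatic from the pointwise inequality $C(x,\rho) \ge \Phi_1(x)$, together with the analogous bound on $-\Phi_1$ obtained by continuity and the fact that negative values can only come from $\Phi_1$ on compact pieces of $\R$ where $\Phi_1$ is bounded. What really needs attention is the continuity of $C$ on $\R \times \mathcal P_p(\R)$. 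For this, I would rely on the standard fact that $\mathcal W_p$-convergence $\rho^k \to \rho$ is equivalent to weak convergence together with convergence of $p$-th moments, and therefore implies uniform integrability of $y \mapsto |y|^p$ along $(\rho^k)$; combined with continuity of $\Phi_2$ and the domination $|\Phi_2(x^k,y)| \le K(1+\sup_k|x^k|^p+|y|^p)$, a Vitali-type argument gives $\int \Phi_2(x^k,y)\,\rho^k(dy) \to \int \Phi_2(x,y)\,\rho(dy)$ whenever $(x^k,\rho^k) \to (x,\rho)$. Since $\Phi_1$ is continuous and $\max$ preserves continuity, $C$ is then continuous.

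This joint continuity of $(x,\rho) \mapsto \int \Phi_2(x,y)\,\rho(dy)$ is the main (and essentially the only) obstacle, but it is a standard consequence of the uniform-integrability characterization of $\mathcal W_p$-convergence. Once it is in hand, $C$ satisfies the hypotheses of Proposition \ref{prop:intro.stability.process}, which immediately yields continuity of $\hat V_C = {\cal A}m$ on $\{(\mu,\nu) \in \mathcal P_p(\R)^2 : \mu \le_{cx} \nu\}$ as claimed.
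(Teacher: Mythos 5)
Your approach is identical to the paper's: write ${\cal A}m(\mu,\nu)=\hat V_C(\mu,\nu)$ with $C(x,\rho)=\max\bigl(\Phi_1(x),\int\Phi_2(x,y)\,\rho(dy)\bigr)$ via the Snell envelope and the identification of $\mathbf M(\mu,\nu)$ with $\Lambda_M(\mu,\nu)$, then invoke Proposition \ref{prop:intro.stability.process}. Your continuity argument for $(x,\rho)\mapsto\int\Phi_2(x,y)\,\rho(dy)$, via uniform integrability of $|y|^p$ under $\mathcal W_p$-convergent sequences combined with the pointwise domination of $\Phi_2$, is correct in substance (one can equally factor through the $\mathcal W_p$-continuous map $(x,\rho)\mapsto\delta_x\otimes\rho\in\mathcal P_p(\R^2)$).

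The gap is in your treatment of the lower growth bound on $C$. The assertion that ``negative values can only come from $\Phi_1$ on compact pieces of $\R$ where $\Phi_1$ is bounded'' is false: a continuous $\Phi_1$ with $\sup_x\Phi_1(x)/(1+|x|^p)<\infty$ can be unbounded below at any rate, e.g.\ $\Phi_1(x)=-e^{|x|}$. With the hypothesis read literally (an upper bound only), neither the two-sided estimate $|C(x,\rho)|\le K\bigl(1+|x|^p+\int|y|^p\,\rho(dy)\bigr)$ required by Proposition \ref{prop:intro.stability.process} nor the pointwise domination $|\Phi_2(x,y)|\le K(1+|x|^p+|y|^p)$ underlying your Vitali argument is available. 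For the proof to close, the hypothesis must carry absolute values, i.e.\ $\sup_{(x,y)}\bigl(|\Phi_1(x)|/(1+|x|^p)+|\Phi_2(x,y)|/(1+|x|^p+|y|^p)\bigr)<\infty$, consistent with the two-sided bounds required by Proposition \ref{prop:intro.stability.process} and appearing in Corollary \ref{cor:filpro}. Under that reading, $\Phi_1(x)\le C(x,\rho)\le|\Phi_1(x)|+\int|\Phi_2(x,y)|\,\rho(dy)$ yields the required estimate immediately, and the compact-set argument is both incorrect and superfluous.
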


\subsection{Shadow couplings} \label{ssec:main.SC}
  The shadow couplings introduced by Beiglb\"ock and Juillet in \cite{BeJu21} fit into this framework. These couplings admit characterizations in terms of optimality, in terms of geometry of their support sets, and as barrier-type solutions to the Skorokhod embedding problem (c.f.\ \cite[Theorem 1.1]{BeJu21}). Let ${\cal U}=[0,1]$ and $\Leb$ denote the Lebesgue measure on this set. For $\mu,\nu\in{\cal P}_1(\R)$ such that $\mu\le_{cx}\nu$, shadow couplings are solutions to weak martingale transport problems of the form
\begin{equation} \label{eq:SC.WMOT}
    \inf_{\pi \in \Pi_M(\mu,\nu)} \int C_{\bar \mu}(x,\pi_x) \, \mu(dx),
\end{equation}
where $\bar\mu \in \Pi(\mu,\Leb)$ and $C_{\bar \mu}(x,\rho) := \inf_{\chi \in \Pi_M(\delta_x \times \bar \mu_x, \rho)} \int (1 - u) \sqrt{1 + y^2} \, \chi(dx,du,dy)$.
The unique solution $\tilde \pi^\star$ to \eqref{eq:SC.WMOT} is called a shadow coupling with source $\bar \mu$.
Attached to each shadow coupling $\tilde \pi^\star$ with source $\bar \mu$ is the (unique) lifted shadow coupling $\pi^\star \in \Pi_M(\bar\mu,\nu)$ that satisfies $\proj_{1,3} \pi^\star = \tilde \pi^\star$ and minimizes
\begin{equation}
    \label{eq:SC.liftedMOT}
    V_{\rm SC}(\bar\mu,\nu) := \inf_{\gamma \in \Pi_M(\bar\mu,\nu)} \int_{\R \times [0,1] \times \R} (1 - u) \sqrt{1 + y^2} \, \gamma(dx,du,dy).
\end{equation}
We denote by $\SC$ the selector of unique optimizers of \eqref{eq:SC.liftedMOT}, i.e., $\SC(\bar\mu,\nu) = \pi^\star$.
According to \cite[Theorem 1.1]{BeJu21}, there is a stochastic basis supporting an $(\mathcal F_t)_{t \in [0,1]}$-Brownian motion $B = (B_t)_{t \in [0,1]}$ and an $\mathcal F_0$-measurable random variable $U$ uniformly distributed on $[0,1]$ so that $\pi^\star \sim (B_0,U,B_\tau)$ where $\tau$ is the hitting time of the process $(B_t,U)_{t \in [0,1]}$ into a left barrier, that is a Borel set $R \subseteq [0,1] \times \R$ such that $(x,u) \in R$, $v \le u$ implies $(x,v) \in R$.
Put differently, there exist two Borel measurable maps $T_1, T_2 \colon \R \times [0,1] \to \R$ satisfying
\[
    \forall x\in\R,\;\forall 0 \le v \le u \le 1,\;T_1(x,u) \le T_1(x,v) \le x \le T_2(x,v) \le T_2(x,u),
\]
such that
\begin{equation}
    \label{eq:SC.extremal}
    \pi^\star(dx,du,dy) = \bar \mu(dx,du) B(x,T_1(x,u),T_2(x,u))(dy),
\end{equation}
where, for $x,l,r \in \R$, $B(x,l,r)$ denotes the distribution
\[
    B(x,l,r) := 
    \begin{cases}
        \frac{r-x}{r-l}\delta_{l}+\frac{x-l}{r-l}\delta_{r} & l < x < r, \\
        \delta_x & \text{else.}
    \end{cases}
\]
W.l.o.g. we suppose that $T_1(x,u)=x=T_2(x,u)$ as soon as either $T_1(x,u)=x$ or $T_2(x,u)=x$.
 \begin{proposition}
     \label{prop:shadcoupl}
     The optimal value map $V_{\rm SC}$, the selector $\SC$ of optimizers of \eqref{eq:SC.liftedMOT}, and $J \circ \SC$ are continuous on the domain $\{ (\bar\mu,\nu) : \proj_1 \bar \mu \le_{cx} \nu, \proj_2 \bar \mu = \Leb\} \subseteq \mathcal P_p(\R \times [0,1]) \times \mathcal P_p(\R)$ and with range $\R$, $\mathcal P_p(\R \times [0,1] \times \R)$, and $\mathcal P_p(\R \times [0,1] \times \mathcal P_p(\R))$, respectively.
     
    Furthermore, when $(\bar \mu^k,\nu^k)_{k \in \N}$ with $\proj_1 \bar \mu^k \le_{cx} \nu^k$ and $\proj_2 \bar\mu^k = \Leb$ is a sequence converging to $(\bar\mu,\nu)$ in ${\cal P}_p(\R\times\U)\times{\cal P}_p(\R)$ with $\bar\mu^k \to \bar\mu$ in total variation and $(T^k_1,T^k_2)$ (resp. $(T_1,T_2)$) are the pairs of maps satisfying \eqref{eq:SC.extremal} for $\SC(\bar\mu^k,\nu^k)$ and (resp. $\SC(\bar\mu,\nu)$) respectively, then
    \begin{align*}
        (T_1^k,T_2^k) \to (T_1,T_2)\quad \text{in }\bar\mu\text{-probability on }\{T_1 \neq T_2\}, \\
        |T_1^k - T_1| \wedge |T_2^k - T_2| \to 0 \quad\text{in }\bar\mu\text{-probability on }\{T_1 = T_2 \}.      
    \end{align*}
 \end{proposition}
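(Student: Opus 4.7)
The plan is to deduce the proposition from Theorem \ref{thm:intro.stability} applied to the cost $C(x,u,\rho) := (1-u)\int_\R \sqrt{1+y^2}\,\rho(dy)$, combined with the extremal representation \eqref{eq:SC.extremal} and the uniqueness of the lifted shadow coupling (cf.\ \cite[Theorem 1.1]{BeJu21}). The map $C$ is continuous and linear (hence convex) in $\rho$, and from $\sqrt{1+y^2}\le 1+|y|\le 2(1+|y|^p)$ for $p\ge 1$, Assumption \ref{ass:cost.A} holds. Theorem \ref{thm:intro.stability}(i) then gives continuity of $V_{\rm SC}$ and that every $\mathcal P_p$-accumulation point of $\pi^k := \SC(\bar\mu^k,\nu^k)$ is a minimizer of $V_{\rm SC}(\bar\mu,\nu)$. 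By uniqueness of the lifted shadow coupling, each such accumulation point equals $\pi := \SC(\bar\mu,\nu)$, so $\pi^k \to \pi$ in $\mathcal P_p(\R\times[0,1]\times\R)$.

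For the continuity of $J \circ \SC$, I would use Theorem \ref{thm:intro.approximation} to produce $\tilde\pi^k \in \Pi_M(\bar\mu^k,\nu^k)$ with $\tilde\pi^k\to\pi$ and $J(\tilde\pi^k)\to J(\pi)$. Optimality $\int C\,d\pi^k \le \int C\,d\tilde\pi^k$ combined with continuity of $V_{\rm SC}$ yields $\int C\,d\pi^k \to \int C\,d\pi$. Since both $\pi^k$ and $\pi$ have the two-atom barrier form \eqref{eq:SC.extremal}, every $\mathcal P_p$-accumulation point $Q^\infty$ of $(J(\pi^k))_k$ is concentrated on probability measures supported in at most two points and can be viewed as the $J$-image of some optimizer of $V_{\rm SC}(\bar\mu,\nu)$; uniqueness then forces $Q^\infty = J(\pi)$.

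Assume now additionally that $\bar\mu^k\to\bar\mu$ in total variation. Write $J(\pi^k)$ and $J(\pi)$ as the pushforwards of $\bar\mu^k$ and $\bar\mu$ by the maps $(x,u)\mapsto(x,u,B(x,T_1^k(x,u),T_2^k(x,u)))$ and $(x,u)\mapsto(x,u,B(x,T_1(x,u),T_2(x,u)))$ respectively. TV convergence lets us substitute $\bar\mu^k$ by $\bar\mu$ at vanishing cost, reducing the problem to a convergence of graph couplings sharing the common first marginal $\bar\mu$: this gives $B(x,T_1^k(x,u),T_2^k(x,u))\to B(x,T_1(x,u),T_2(x,u))$ in $\bar\mu$-probability in the weak topology of $\mathcal P_p(\R)$. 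A case analysis on the two-atom structure of $B$ concludes: on $\{T_1\ne T_2\}$ the limit has two distinct atoms with positive weights $\tfrac{T_2-x}{T_2-T_1}$ and $\tfrac{x-T_1}{T_2-T_1}$, so weak convergence forces both atom positions to converge, giving $(T_1^k,T_2^k)\to(T_1,T_2)$ in $\bar\mu$-probability; on $\{T_1=x=T_2\}$ the limit is $\delta_x$, which occurs iff $|T_1^k-x|\wedge|T_2^k-x|\to 0$.

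The main difficulty lies in the second step: since $C(x,u,\cdot)$ is only linear, Theorem \ref{thm:intro.stability}(ii) is not applicable, and convergence at the $J$-level has to be recovered from the rigid extremal structure \eqref{eq:SC.extremal} together with the intrinsic uniqueness of the lifted shadow coupling, rather than from strict convexity of the cost.
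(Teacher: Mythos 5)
Your proposal follows essentially the same route as the paper: Theorem~\ref{thm:intro.stability} (the paper actually invokes its refinement, Theorem~\ref{thm:stability}) with the linear cost for $V_{\rm SC}$ and the $\W_p$-convergence of $\SC$, then the rigid two-atom structure \eqref{eq:SC.extremal} to upgrade to $J$-level convergence, and finally the total-variation/case-analysis argument for $(T_1^k,T_2^k)$ which matches the paper's Lemma~\ref{lem:MZ} and Lemma~\ref{lem:w1bin}. You have correctly identified that linearity of $C$ prevents appealing to strict convexity and that the extremal structure must compensate.

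The one point where your argument is left unfinished is the assertion that every $\W_p$-accumulation point $Q^\infty$ of $(J(\pi^k))_k$ ``can be viewed as the $J$-image of some optimizer.'' Being concentrated on at-most-two-atom measures does not by itself make $Q^\infty$ a $J$-image (a nondegenerate mixture of distinct Dirac-valued kernels is not of the form $J(\sigma)$). What one actually needs is the combination of three facts: (a) $I(Q^\infty)$ is an accumulation point of $\pi^k$ and hence equals $\pi$ by uniqueness; (b) $\pi_{x,u}$ has at most two atoms; (c) $\mean(\rho)=x$ holds $Q^\infty$-a.e., since this set is closed and carries each $J(\pi^k)$. Then, since $\int \rho\, Q^\infty_{x,u}(d\rho) = \pi_{x,u}$ is concentrated on a set $\{l,r\}$ (or a singleton) and every $\rho$ in the support of $Q^\infty_{x,u}$ is both supported there and has mean $x$, one gets $\rho = \pi_{x,u}$ a.s., i.e.\ $Q^\infty = J(\pi)$. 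The paper isolates exactly this rigidity as Lemma~\ref{lem:ext.WAW}, showing that on $\Pi^{\rm ext}_{M,p}$ the $\W_p$ and $\AW_p$ topologies coincide, and then deduces $\AW_p$-continuity of $\SC$ directly from its $\W_p$-continuity rather than via accumulation points. Your intermediate use of Theorem~\ref{thm:intro.approximation} to conclude $\int C\, d\pi^k \to \int C\, d\pi$ is superfluous here: that is just continuity of $V_{\rm SC}$, already established, and it does not drive the $J$-level convergence.
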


\section{Proofs} \label{sec:proofs}
\subsection{Topological refinements} \label{ssec:topology}
    
In order to prove Proposition \ref{prop:stability.VIX}, we introduce refinements of the weak topology as detailed below, which we use to establish stronger versions of the results given in the introduction.
For the rest of the paper, let $\X$ and $\Y$ be (non-empty) Polish subsets of $\R$ and consider two growth functions ${\bar f} \colon \X \times \U \to [1,+\infty)$ and $g \colon \Y \to [1,+\infty)$ that are both continuous and
\begin{equation}
    \label{eq:growth.function}
    \liminf_{ \substack{ |x| \to \infty \\ x \in \X} } \inf_{u \in \U} \frac{{\bar f}(x,u)}{|x|} > 0
    \quad\text{and}\quad
    \liminf_{ \substack{ |y| \to \infty \\ y \in \Y} } \frac{g(y)}{|y|} > 0.
  \end{equation}
We define the sets $\mathcal P_{\bar f}(\X \times \U) := \{ \rho \in \mathcal P(\X\times \U) : \rho({\bar f}) < \infty \}$ and $\mathcal P_g(\Y) := \{ \rho \in \mathcal P(\Y) : \rho(g) < \infty\}$ and endow them with the initial topology induced by $C_b(\X \times \U) \cup \{ {\bar f} \}$ resp.\ $C_b(\Y) \cup \{ g \}$, that is,
\begin{align*}
    \rho^k \to \rho \text{ in } \mathcal P_{\bar f}(\X \times \U) \iff& \rho^k \to \rho \text{ weakly and }\rho^k({\bar f}) \to \rho({\bar f}), \\
    \rho^k \to \rho \text{ in } \mathcal P_g(\Y) \iff& \rho^k \to \rho \text{ weakly and }\rho^k(g) \to \rho(g).    
\end{align*}
Similarly, we define $\mathcal P_{{\bar f} \oplus g}(\X \times \U \times \Y) = \{ \rho \in \mathcal P(\X \times \U \times \Y) : \rho({\bar f} \oplus g) < \infty \}$ and $\mathcal P_{{\bar f} \oplus \hat g}(\X \times \U \times \mathcal P_g(\Y)) := \{ \rho \in \mathcal P(\X \times \U \times \mathcal P_g(\Y)) : \rho({\bar f} \oplus \hat g) < \infty\}$ where ${\bar f} \oplus g(x,u,y) := {\bar f}(x,u) + g(y)$ and ${\bar f} \oplus \hat g(x,u,\rho) = {\bar f}(x,u) + \rho(g)$.
Again, these spaces are endowed with the topology induced by $C_b(\X \times \U \times \Y) \cup \{ {\bar f} \oplus g \}$ resp.\ $C_b(\X \times \U \times \mathcal P_g(\Y)) \cup \{ {\bar f} \oplus \hat g \}$, that is,
\begin{align*}
    \rho^k \to \rho \text{ in } \mathcal P_{{\bar f} \oplus g}(\X \times \U \times \Y) \iff& \rho^k \to \rho \text{ weakly and }\rho^k({\bar f} \oplus g) \to \rho({\bar f} \oplus g), \\
    \rho^k \to \rho \text{ in } \mathcal P_{{\bar f} \oplus \hat g}(\X \times \U \times \mathcal P_g(\Y)) \iff& \rho^k \to \rho \text{ weakly and }\rho^k({\bar f} \oplus\hat g) \to \rho({\bar f} \oplus \hat g).    
\end{align*}
Note that when $\X=\R=\Y$ and ${\bar f}(x,u) = 1 + |x|^p + d_\U^p(u_0,u)$ for some $u_0 \in \U$ and $g(y) = 1 + |y|^p$, we have $\mathcal P_{\bar f}(\X \times \U) = \mathcal P_p(\X \times \Y)$, $\mathcal P_g(\Y) = \mathcal P_p(\Y)$, and the topologies on the above introduced spaces coincide with the corresponding $p$-Wasserstein topologies.
Moreover, when $d_\U$ is bounded, the growth condition \eqref{eq:growth.function} provides that these topologies are finer than the corresponding $1$-Wasserstein topology.
The reader may ignore these refinements of the wea k topology and may accordingly substitute in every statement these refinements with a $p$-Wasserstein topology.

Next, we define the injection
\begin{align}
    \label{eq:J}
    \begin{split}
    J \colon \mathcal P_{{\bar f} \otimes g}(\X \times \U \times \Y) &\to \mathcal P_{{\bar f} \otimes \hat g} (\X \times \U \times \mathcal P_g(\Y)),
    \\ 
    \pi &\mapsto (x,u,\pi_{x,u})_\# \pi,
    \end{split}
\end{align}
and observe that $\int C(x,u,\pi_{x,u}) \, \proj_{\X \times \U} \pi(dx,du) = J(\pi)(C)$ for any $J(\pi)$-integrable $C \colon \X \times \U \times \mathcal P(\Y) \to \R \cup \{ \infty \}$.
In our specific setting we treat the $\X$- and $\U$-coordinates similarly as we interpret the $\X$-coordinate as the spatial state (at time 1) and the $\U$-coordinate as the information state (at time 1), whereas we think of the $\Y$-coordinate as the state at time 2.
For this reason, we say a sequence $(\pi^k)_{k \in \N}$ in $\mathcal P(\X \times \U \times \Y)$ converges in the adapted weak topology to $\pi$ if
\begin{equation}
    \label{eq:adapted.weak.convergence}
    J(\pi^k) \to J(\pi) \quad \text{in }\mathcal P(\X \times \U \times \mathcal P(\Y)).
\end{equation}
The associated adapted $p$-Wasserstein distance of $\pi^1$ and $\pi^2$, where $\pi^1,\pi^2 \in \mathcal P_p(\X \times \U \times \Y)$, is given by
\begin{equation}
    \label{eq:AW}
    \AW_p^p(\pi^1,\pi^2) := \inf_{ \chi \in \Pi(\proj_{\X\times\U} \pi^1, \proj_{\X\times\U} \pi^2)} \int d_\X^p(x_1,x_2) + d_\U^p(u_1,u_2) + \W_p^p(\pi^1_{x_1,u_1}, \pi^2_{x_2,u_2}) \, \chi(dx_1,du_1,dx_2,du_2)
\end{equation}
and satisfies $\AW_p^p(\pi^1,\pi^2) = \W_p^p(J(\pi^1),J(\pi^2))$ where $\W_p$ is the $p$-Wasserstein distance on $\mathcal P_p(\X \times \U \times \mathcal P_p(\Y))$.

The following reformulation of \cite[Lemma 2.7]{Ed19} proves very useful to check convergence in the adapted Wasserstein topology.
\begin{lemma}\label{lem:eder}
   Let $({\cal V},d_{\cal V})$ and $({\cal Z},d_{\cal Z})$ be Polish metric spaces, $\mu \in{\cal P}_p(\cal V)$ and  $\varphi\colon\cal V\to\cal Z$ by a measurable function such that $\varphi_\#\mu\in{\cal P}_p(\cal Z)$. 
   Then $\Pi(\mu,\mu)\ni \pi\mapsto \int_{\cal V\times\cal V}d^p_{\cal Z}(\varphi(v),\varphi(\hat v))\,\pi(dv,d\hat v)$ vanishes with $\int_{\cal V\times\cal V}d_{\cal V}^p(v,\hat v) \, \pi(dv,d\hat v)$.
\end{lemma}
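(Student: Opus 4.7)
My plan is to reformulate the statement sequentially: given $(\pi_n)_{n\in\N}$ in $\Pi(\mu,\mu)$ with $\int_{\mathcal V\times\mathcal V} d_{\mathcal V}^p(v,\hat v)\,\pi_n(dv,d\hat v)\to 0$, I want to show that
\[
   I_n := \int_{\mathcal V\times\mathcal V} d_{\mathcal Z}^p(\varphi(v),\varphi(\hat v))\,\pi_n(dv,d\hat v) \longrightarrow 0.
\]
Fix any $z_0\in\mathcal Z$ and set $f(v) := d_{\mathcal Z}^p(\varphi(v),z_0)$. The hypothesis $\varphi_\#\mu\in\mathcal P_p(\mathcal Z)$ is exactly $\mu(f)<\infty$, and the elementary inequality $d_{\mathcal Z}^p(\varphi(v),\varphi(\hat v))\le 2^{p-1}(f(v)+f(\hat v))$ supplies both the a priori bound $\sup_n I_n\le 2^p\mu(f)<\infty$ and the main tool for controlling tail contributions.

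The argument then proceeds by a cutoff and Lusin approximation. Fix $\delta>0$. Using $f\in L^1(\mu)$, choose $N>0$ with $\int_{\{f>N\}} f\,d\mu<\delta$; then, by Lusin's theorem on the Polish (hence Radon) probability space $(\mathcal V,\mu)$, pick a compact $K\subseteq\mathcal V$ on which $\varphi|_K$ is continuous and such that $\mu(\mathcal V\setminus K)<\delta/N$. I then decompose $I_n$ into the integrals over $K\times K$ and over $D:=(\mathcal V\times\mathcal V)\setminus (K\times K)\subseteq\{v\notin K\}\cup\{\hat v\notin K\}$.

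For the contribution from $D$, the elementary bound above reduces everything to four integrals. Two of them pair a function with an indicator of the same variable and are evaluated via a single marginal as $\int_{\mathcal V\setminus K} f\,d\mu$, itself handled by a further level-set split. The remaining two are \emph{cross} terms of the form $\int \mathbf 1_{\{v\notin K\}}f(\hat v)\,\pi_n$, which is the delicate point; I handle them via
\[
    \int \mathbf 1_{\{v\notin K\}}f(\hat v)\,\pi_n \le N\,\pi_n\{v\notin K\} + \int f(\hat v)\mathbf 1_{\{f(\hat v)>N\}}\,\pi_n = N\mu(\mathcal V\setminus K) + \int_{\{f>N\}} f\,d\mu < 2\delta,
\]
where the equality uses only that both marginals of $\pi_n$ equal $\mu$, so we bypass any joint control. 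Collecting all terms, the $D$-contribution is bounded by a constant multiple of $\delta$, uniformly in $n$.

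On $K\times K$, compactness of $K$ and continuity of $\varphi|_K$ yield uniform continuity, so I can choose $\alpha>0$ such that $v,\hat v\in K$ and $d_{\mathcal V}(v,\hat v)<\alpha$ imply $d_{\mathcal Z}(\varphi(v),\varphi(\hat v))<\delta^{1/p}$. Splitting $K\times K$ according to whether $d_{\mathcal V}(v,\hat v)<\alpha$, the near-diagonal part contributes at most $\delta$, while the far part is dominated by $(2^p\sup_K f)\cdot\pi_n\{d_{\mathcal V}\ge\alpha\}\le (2^p\sup_K f)\alpha^{-p}\int d_{\mathcal V}^p\,\pi_n$ by Markov's inequality, which tends to $0$ by hypothesis. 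Thus $\limsup_n I_n\le C\delta$ for a constant $C$ independent of $\delta$, and since $\delta>0$ was arbitrary we obtain $I_n\to 0$. The main obstacle is precisely the cross terms: the standard marginal estimate does not apply to an integrand depending on both variables, and the level-set split at height $N$ is what converts them into pure-marginal integrals that can be bounded uniformly in $n$.
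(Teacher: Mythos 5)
The paper does not supply a proof of this lemma; it is stated as a reformulation of a result of Eder and used as a black box, so there is no in-paper argument to compare against. Your proof is a correct, self-contained derivation, and the strategy — Lusin's theorem to reduce to a compact set where $\varphi$ is uniformly continuous, a cutoff at level $N$ in the integrand $f(v)=d_{\mathcal Z}^p(\varphi(v),z_0)$ to control the tail and, crucially, the cross terms, followed by Markov's inequality for the off-diagonal piece on $K\times K$ — is sound in every step. In particular, the treatment of the cross terms $\int\mathbf 1_{\{v\notin K\}}f(\hat v)\,\pi_n$ is handled correctly: the split at height $N$ converts the joint integral into two pure-marginal integrals, each controlled by $\mu$ alone, which is exactly what is needed since no joint regularity of $\pi_n$ is available. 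The only use of the hypothesis $\int d_{\mathcal V}^p\,\pi_n\to 0$ is in the far-from-diagonal contribution on $K\times K$, via $\pi_n\{d_{\mathcal V}\ge\alpha\}\le\alpha^{-p}\int d_{\mathcal V}^p\,\pi_n$, and all other bounds are uniform in $n$; this gives $\limsup_n I_n\le C\delta$ for every $\delta>0$, hence $I_n\to 0$. The argument is elementary and transparent, and I see no gaps.
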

For more details on the adapted weak topologies and the adapted Wasserstein distance, we refer to \cite{BaBaBeEd19a, BaBePa21}.

\subsection{Convergence of subprobability measures}\label{ssec:subprobability}
Occasionally it will be advantageous to work with subprobability measures.
Therefore, we denote by $\mathcal M_p(\X)$ the set of finite non-negative Borel measures on $\X$ that have finite $p$-th moments and by $\mathcal M_p^\ast(\X)$ the subset of measures with positive mass.
We say that  a sequence $(\rho^k)_{k \in \N}$ converges in ${\cal M}_p(\X)$ to $\rho$ if one of the following equivalent conditions holds:
\begin{enumerate}[label = (\alph*)]
    \item $(\rho^k)_{k \in \N}$ converges weakly to $\rho$ and, for some $x_0 \in \X$, $\lim_{k \to \infty} \int d_\X^p(x,x_0) \, \rho^k(dx) = \int d_\X^p(x,x_0) \, \rho(dx)$;
    \item for every continuous function $\varphi \colon \X \to \R$ such that, for some $x_0 \in \X$ and all $x \in \X$, $|\varphi(x)| \le 1 + d_\X^p(x,x_0)$, $\lim_{k \to \infty} \rho^k(\varphi) = \rho(\varphi)$.
\end{enumerate}
Further, when $\rho,\tilde \rho \in \mathcal M_p^\ast(\X)$ have equal mass, we can consider their $p$-Wasserstein distance given by
\[
    \mathcal W_p(\rho,\tilde \rho) := \rho(\X)^\frac1p \, \mathcal W_p\left( \frac{\rho}{\rho(\X)}, \frac{\tilde \rho}{\tilde \rho(\X)}\right),
\]
and similarly define the $p$-adapted Wasserstein distance $\AW_p$ between measures $\pi, \tilde \pi \in \mathcal M_p^\ast(\X \times U \times \Y)$ that have equal mass.

\begin{lemma}
	\label{lem:convergence_subprobabilities}
	Let $p \ge 1$, $\rho \in \mathcal M_p^\ast(\X)$, and $(\rho^k)_{k \in \N}$ be a sequence in $\mathcal M_p^\ast(\X)$ with $\lim_{k \to \infty} \rho^k(\X) = \rho(\X)$.
	Then the following are equivalent:
	\begin{enumerate}[label = (\roman*)]
		\item \label{it:convergence_sub_1}
        $(\rho^k)_{k \in \N}$ converges in $\mathcal M_p(\X)$ to $\rho$;
		\item \label{it:convergence_sub_2}
		the normalized sequence $\left(\frac{\rho^k}{\rho^k(\X)}\right)_{k \in \N}$ converges to $\frac{\rho}{\rho(\X)}$ in $\mathcal P_p(\X)$.
	\end{enumerate}
\end{lemma}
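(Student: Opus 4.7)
The plan is to show that both conditions are equivalent restatements of the same data, once one observes that the hypothesis $\rho^k(\mathcal X) \to \rho(\mathcal X) > 0$ allows normalization without any loss.

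For the implication (i) $\Rightarrow$ (ii), I would start from the characterization (b) of convergence in $\mathcal M_p(\mathcal X)$: every continuous $\varphi \colon \mathcal X \to \mathbb R$ with $|\varphi(x)| \le 1 + d_\mathcal X^p(x,x_0)$ satisfies $\rho^k(\varphi) \to \rho(\varphi)$. Dividing by $\rho^k(\mathcal X) \to \rho(\mathcal X) > 0$, which is positive for large $k$, yields
\[
    \frac{\rho^k(\varphi)}{\rho^k(\mathcal X)} \longrightarrow \frac{\rho(\varphi)}{\rho(\mathcal X)},
\]
so in particular the normalized sequence converges weakly (using $\varphi$ bounded continuous) and has $p$-th moments converging to that of $\rho / \rho(\mathcal X)$ (using $\varphi(x) = d_\mathcal X^p(x,x_0)$). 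This is exactly convergence in $\mathcal P_p(\mathcal X)$.

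For the converse (ii) $\Rightarrow$ (i), I would proceed symmetrically. Suppose $\rho^k / \rho^k(\mathcal X) \to \rho / \rho(\mathcal X)$ in $\mathcal P_p(\mathcal X)$. By the standard characterization of $p$-Wasserstein convergence, every continuous $\varphi$ with $|\varphi(x)| \le 1 + d_\mathcal X^p(x,x_0)$ satisfies
\[
    \frac{\rho^k(\varphi)}{\rho^k(\mathcal X)} \longrightarrow \frac{\rho(\varphi)}{\rho(\mathcal X)}.
\]
Multiplying by the convergent sequence of total masses $\rho^k(\mathcal X) \to \rho(\mathcal X)$ gives $\rho^k(\varphi) \to \rho(\varphi)$, which is precisely condition (b) of convergence in $\mathcal M_p(\mathcal X)$.

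There is really no substantive obstacle here; the role of the assumption $\rho \in \mathcal M_p^\ast(\mathcal X)$ and $\rho^k(\mathcal X) \to \rho(\mathcal X)$ is simply to guarantee that $\rho^k(\mathcal X) > 0$ for all large $k$ and that the ratio $\rho^k(\varphi)/\rho^k(\mathcal X)$ has a well-defined positive limit, so that multiplication and division by the total masses preserve convergence in both directions. The argument relies only on the test-function characterization of $\mathcal M_p$- and $\mathcal P_p$-convergence and standard properties of limits of products.
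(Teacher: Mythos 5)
Your proof is correct and follows essentially the same approach as the paper: the key observation in both is that multiplying or dividing by the convergent positive scalars $\rho^k(\X) \to \rho(\X) > 0$ preserves the relevant convergences. The only cosmetic difference is that you work with the test-function characterization (b) of $\mathcal M_p$-convergence, whereas the paper uses characterization (a) (weak convergence plus convergence of $p$-th moments) together with \cite[Definition 6.8]{Vi09} for the $\mathcal P_p$ side; since (a) and (b) are stated as equivalent in the paper's setup, this is an immaterial variation.
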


\begin{proof}
    Since $\lim_{k \to \infty} \rho^k(\X) = \rho(\X)$, we have in either case that $(\rho^k)_{k \in \N}$ and the normalized sequence $(\rho^k / \rho^k(\X))_{k \in \N}$ are weakly convergent with limit $\rho$ and $\rho / \rho(\X)$, respectively.
	For some $x_0 \in \X$, we then have
	\[
		\lim_{k \to \infty} \int d_\X(x,x_0)^p \, \rho^k(dx) =\int d_\X(x,x_0)^p \, \rho(dx) 
		\iff
		\lim_{k \to \infty} \int d_\X(x,x_0)^p \, \frac{\rho^k(dx)}{\rho^k(\X)} =\int d_\X(x,x_0)^p \, \frac{\rho(dx)}{\rho(\X)}.
	\]	
	Thus, the equivalence of \ref{it:convergence_sub_1} and \ref{it:convergence_sub_2} follows from \cite[Definition 6.8]{Vi09}.
\end{proof} 

\begin{lemma}
	\label{lem:convergence_dominated}
	Let $p \ge 1$ and $\X$ be a Polish space.
	Let $(\rho ^k)_{k \in \N}$ be a convergent sequence in $\mathcal M_p(\X)$ and
	$(q^k)_{k \in \N}$ be a weakly convergent sequence with $q^k \le \rho^k$ for every $k\in\N$.
	Then, $(q^k)_{k \in \N}$ converges in $\mathcal M_p(\X)$.
\end{lemma}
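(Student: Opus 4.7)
The plan is to decompose $\rho^k = q^k + r^k$ with $r^k := \rho^k - q^k$ a non-negative measure, and use that lower semicontinuity of moment integrals holds for both pieces while the sum of moments converges to its limit; this will force both liminfs to be equalities.

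More precisely, let $q$ denote the weak limit of $(q^k)_{k\in\N}$ and $\rho$ the limit of $(\rho^k)_{k\in\N}$ in $\mathcal M_p(\X)$. Since convergence in $\mathcal M_p(\X)$ entails weak (narrow) convergence, and testing against the constant function $1$ gives mass convergence, I first observe that $r^k := \rho^k - q^k$ is a non-negative measure (by the hypothesis $q^k \le \rho^k$) and converges weakly to $r := \rho - q$, which is itself non-negative as a weak limit of non-negative measures. Moreover, $r \le \rho$, so $r \in \mathcal M_p(\X)$, and likewise $q \in \mathcal M_p(\X)$.

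Next, fix $x_0 \in \X$ and apply the portmanteau theorem in its Fatou form to the non-negative continuous (hence lower semicontinuous) function $x \mapsto d_\X(x,x_0)^p$:
\begin{equation*}
    \int d_\X(x,x_0)^p \, q(dx) \le \liminf_{k \to \infty} \int d_\X(x,x_0)^p \, q^k(dx),
\end{equation*}
\begin{equation*}
    \int d_\X(x,x_0)^p \, r(dx) \le \liminf_{k \to \infty} \int d_\X(x,x_0)^p \, r^k(dx).
\end{equation*}
Setting $a_k := \int d_\X(x,x_0)^p \, q^k(dx)$, $b_k := \int d_\X(x,x_0)^p \, r^k(dx)$, $A := \int d_\X(x,x_0)^p \, q(dx)$, and $B := \int d_\X(x,x_0)^p \, r(dx)$, we have $a_k,b_k \ge 0$, $\liminf_k a_k \ge A$, $\liminf_k b_k \ge B$, and by convergence of $\rho^k$ in $\mathcal M_p(\X)$,
\begin{equation*}
    a_k + b_k = \int d_\X(x,x_0)^p \, \rho^k(dx) \xrightarrow[k \to \infty]{} \int d_\X(x,x_0)^p \, \rho(dx) = A + B.
\end{equation*}

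The standard squeeze then gives $\limsup_k a_k \le (A+B) - \liminf_k b_k \le A$, so $\lim_k a_k = A$, i.e.\ the $p$-th moments of $q^k$ converge to that of $q$. Combined with the assumed weak convergence $q^k \to q$, condition~(a) in the definition of convergence in $\mathcal M_p(\X)$ from Subsection~\ref{ssec:subprobability} is fulfilled, and the conclusion follows. I do not anticipate any genuine obstacle; the only care needed is to notice that $r^k$ is well-defined as a non-negative measure and to apply Fatou simultaneously to $q^k$ and $r^k$ so that the two lower-semicontinuity inequalities, pinched against the equality of their sum, become equalities.
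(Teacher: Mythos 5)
Your proof is correct and takes essentially the same route as the paper: decompose $\rho^k = q^k + (\rho^k - q^k)$, apply the Portmanteau (Fatou) lower-semicontinuity inequality to both pieces, and squeeze against the convergence of the $p$-th moments of $\rho^k$. The paper writes the squeeze as $\limsup_k \int d_\X(x,x_0)^p\,q^k(dx) = \int d_\X(x,x_0)^p\,\rho(dx) - \liminf_k \int d_\X(x,x_0)^p\,\tilde q^k(dx) \le \int d_\X(x,x_0)^p\,q(dx)$, which is precisely your $a_k, b_k, A, B$ computation.
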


\begin{proof}
	Write $\rho$ and $q$ for the weak limits of $(\rho^k)_{k \in \N}$ and $(q^k)_{k \in \N}$ respectively.
	Consider the sequence $\tilde q^k := \rho^k - q^k \in \mathcal M_p(\X)$, $k \in \N$, which is also weakly convergent with limit $\tilde q := \rho - q$.
	By Portmanteau's theorem we have
	\begin{align*}
		\int d_\X(x,x_0)^p \, q(dx) &\le \liminf_{k \to \infty} \int d_\X(x,x_0)^p \, q^k(dx), \\
		\\
		\int d_\X(x,x_0)^p \,\tilde q(dx) &\le \liminf_{k \to \infty} \int d_\X(x,x_0)^p \, \tilde q^k(dx).
	\end{align*}
	Hence,
	\begin{align*}
		\limsup_{k \to \infty} \int d_\X(x,x_0)^p \, q^k(dx) 
		&=
		\int d_\X(x,x_0)^p \, \rho(dx) - \liminf_{k \to \infty}\int d_\X(x,x_0)^p \, \tilde q^k(dx) 
		\\
		&\le
		\int d_\X(x,x_0)^p \, \rho(dx) - \int d_\X(x,x_0)^p \, \tilde q(dx) = \int d_\X(x,x_0)^p \, q(dx),
	\end{align*}
	yields $\lim_{k \to \infty} \int d_\X(x,x_0)^p \, q^k(dx) = \int d_\X(x,x_0)^p \, q(dx)$.
\end{proof}

\subsection{Approximation of extended martingale couplings: proof of Theorem \ref{thm:intro.approximation}} \label{ssec:proof.approximation}
Before stating and proving a strengthened  version of Theorem  \ref{thm:intro.approximation}, let us deduce stability of the set of martingale couplings with respect to the marginals.
The Hausdorff distance between two closed subsets $\mathcal A, \mathcal B \subseteq \mathcal P_p(\R \times \U \times \R)$ is denoted by $d_{\rm H}(\mathcal A, \mathcal B) :=\max \left( \sup_{a \in \mathcal A} \mathcal W_p(a, \mathcal B), \sup_{b \in \mathcal B} \mathcal W_p(b,\mathcal A) \right)$ where $\mathcal W_p(a,\mathcal B) := \inf_{b \in \mathcal B} \mathcal W_p(a,b)$. Note that for $(\bar\mu,\nu)\in{\cal P}_p(\R\times\U)\times {\cal P}_p(\R)$, $\Pi_M(\bar\mu,\nu)$ is a compact subset of ${\cal P}_p(\R\times\U\times\R)$.
\begin{corollary}\label{cor:haus}
   Let $((\bar\mu^k,\nu^k))_{k\in\N}$ with $\proj_1\bar\mu^k\le_{cx}\nu^k$  be convergent in $\mathcal P_p(\R\times\U)\times\mathcal P_p(\R)$ to $(\bar\mu,\nu)$. Then \begin{equation*}
    \lim_{k \to \infty} d_{\rm H}\left( \Pi_M(\bar \mu^k, \nu^k), \Pi_M(\bar \mu,\nu) \right) = 0.
\end{equation*}
\end{corollary}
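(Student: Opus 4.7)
The plan is to control the two halves of the Hausdorff distance separately, each time by a compactness/contradiction argument combined with Theorem \ref{thm:intro.approximation}.

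First I would show $\sup_{\pi \in \Pi_M(\bar\mu,\nu)} \mathcal W_p(\pi, \Pi_M(\bar\mu^k,\nu^k)) \to 0$. Suppose this fails: then there exist $\epsilon > 0$, a subsequence (still indexed by $k$), and $\pi^k \in \Pi_M(\bar\mu,\nu)$ with $\mathcal W_p(\pi^k, \Pi_M(\bar\mu^k,\nu^k)) \ge \epsilon$. By the compactness of $\Pi_M(\bar\mu,\nu)$ in $\mathcal P_p(\R\times\U\times\R)$ noted just before the statement, extract a further subsequence converging in $\mathcal P_p$ to some $\pi^\infty \in \Pi_M(\bar\mu,\nu)$. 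Apply Theorem \ref{thm:intro.approximation} to $\pi^\infty$: there exists $\tilde\pi^k \in \Pi_M(\bar\mu^k,\nu^k)$ converging weakly to $\pi^\infty$. Since the marginals of $\tilde\pi^k$ converge in $\mathcal P_p$ by assumption, the $p$-th moment of $\tilde\pi^k$ converges to that of $\pi^\infty$, which upgrades weak convergence to $\mathcal P_p$ convergence. Then
\[
    \mathcal W_p(\pi^k,\tilde\pi^k) \le \mathcal W_p(\pi^k,\pi^\infty) + \mathcal W_p(\pi^\infty,\tilde\pi^k) \to 0,
\]
contradicting $\mathcal W_p(\pi^k, \Pi_M(\bar\mu^k,\nu^k)) \ge \epsilon$.

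For the reverse direction, $\sup_{\pi^k \in \Pi_M(\bar\mu^k,\nu^k)} \mathcal W_p(\pi^k, \Pi_M(\bar\mu,\nu)) \to 0$, I would again argue by contradiction. Assume there exists $\pi^k \in \Pi_M(\bar\mu^k,\nu^k)$ with $\mathcal W_p(\pi^k,\Pi_M(\bar\mu,\nu)) \ge \epsilon$. Because $(\bar\mu^k,\nu^k) \to (\bar\mu,\nu)$ in $\mathcal P_p(\R\times\U)\times \mathcal P_p(\R)$, the marginals of $\pi^k$ form a tight family with uniformly convergent $p$-th moments, so $(\pi^k)_k$ is relatively compact in $\mathcal P_p(\R\times\U\times\R)$. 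Extracting a subsequence, $\pi^k \to \pi^\infty$ in $\mathcal P_p$. The limit has marginals $\bar\mu$ and $\nu$ by continuity of $\proj_{1,2}$ and $\proj_3$. To verify the martingale constraint, pass to the limit in
\[
    \int \phi(x,u)(y-x)\, \pi^k(dx,du,dy) = 0, \qquad \phi \in C_b(\R\times\U),
\]
which is legitimate because $\mathcal P_p$-convergence ($p\ge 1$) ensures uniform integrability of the function $(x,u,y)\mapsto \phi(x,u)(y-x)$. Hence $\pi^\infty \in \Pi_M(\bar\mu,\nu)$ and $\mathcal W_p(\pi^k,\pi^\infty) \to 0$, again giving a contradiction.

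The main technical obstacle is upgrading the weak convergence supplied by Theorem \ref{thm:intro.approximation}, and the weak precompactness in the second part, to honest $\mathcal W_p$ convergence: without this we cannot compare $\mathcal W_p$-distances to $\Pi_M(\bar\mu^k,\nu^k)$. The upgrade is handled by exploiting the assumed $\mathcal P_p$ convergence of the marginals, which automatically pins down convergence of the $p$-th moments of the three-variable couplings.
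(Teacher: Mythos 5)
Your proof is correct and follows essentially the same route as the paper's: both directions of the Hausdorff distance are controlled via compactness of the martingale-coupling sets, passage to weakly convergent subsequences, and Theorem~\ref{thm:intro.approximation} to close the argument. You are in fact slightly more explicit than the paper about the step of upgrading weak convergence to $\mathcal W_p$-convergence by exploiting the assumed $\mathcal P_p$-convergence of the marginals, which is a welcome clarification rather than a deviation.
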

The corresponding statement for couplings without the martingale constraint is straightforward to see as in this case one even has
\[
    d_{\rm H}\left( \Pi(\bar\mu,\nu), \Pi(\bar\mu',\nu') \right) \le \left(\mathcal W^p_p(\bar\mu,\bar\mu') + \mathcal W^p_p(\nu,\nu')\right)^{1/p}\le \mathcal W_p(\bar\mu,\bar\mu') + \mathcal W_p(\nu,\nu').
\]
\begin{proof}[Proof of Corollary \ref{cor:haus}]
Assume that $(\bar \mu^k,\nu^k)_{k \in \N}$ converge in $\mathcal P_p(\R \times \U) \times \mathcal P_p(\R)$ to $(\bar \mu,\nu)$.
Note that $\bigcup_{k \in \N} \Pi_M(\bar \mu^k,\nu^k)$ is relatively compact as consequence of Prokhorov's theorem. 

On the one hand, any sequence $(\pi^k)_{k \in \N}$ with $\pi^k \in \Pi_M(\bar \mu^k,\nu^k)$ admits a weakly convergent subsequence $(\pi^{k_j})_{j \in \N}$ with limit $\pi \in \Pi_M(\bar \mu,\nu)$. Therefore,
\[
    \lim_{k \to \infty} \sup_{\pi^k \in \Pi(\bar \mu^k,\nu^k)} \mathcal W_p\left(\pi^k,\Pi_M(\bar \mu,\nu)\right) = 0.
\]

On the other hand, the map $\pi \mapsto \mathcal W_p(\pi, \Pi_M(\bar \mu^k,\nu^k))$ is $\mathcal W_p$-continuous.
Thus, by compactness of the set of martingale couplings 
there is for every $k \in \N$, $\tilde \pi^k \in \Pi_M(\bar \mu,\nu)$ with $\mathcal W_p\left(\tilde \pi^k, \Pi_M(\bar \mu^k,\nu^k) \right) = \sup_{\pi \in \Pi_M(\bar \mu,\nu)} \mathcal W_p\left(\pi, \Pi_M(\bar \mu^k,\nu^k) \right)$.
Again by compactness, any subsequence of $(\tilde \pi^k)_{k \in \N}$ admits a further subsequence converging weakly to some limit in $\Pi_M(\bar \mu,\nu)$.
For any of these accumulation points there is an approximative sequence provided by Theorem \ref{thm:intro.approximation}.
Consequently,
\[
    \lim_{k \to \infty} \sup_{\pi \in \Pi_M(\bar \mu,\nu) } \mathcal W_p\left( \pi, \Pi_M(\bar \mu^k,\nu^k) \right) = 0.\qedhere
\]
\end{proof}
We will prove the following strengthened version of Theorem \ref{thm:intro.approximation} which takes into account general integrability conditions over Polish subsets of $\R$ and is, in fact, an extension of the main result in \cite{BJMP22}. For $\mu\in{\cal P}(\X)$ and $\nu\in{\cal P}_g(\Y)$, $\mu\le_{cx}\nu$ means that the respective extensions $\mu(\cdot\cap \X)$ and $\nu(\cdot\cap\Y)$ of $\mu$ and $\nu$ to the Borel sigma-field on $\R$ satisfy $\mu(\cdot\cap \X)\le_{cx}\nu(\cdot\cap\Y)$.

\begin{theorem} \label{thm:approximation}
    Let $(\bar \mu^k,\nu^k)_{k \in \N}$, $\proj_1 \bar \mu^k \le_{cx} \nu^k$, be a convergent sequence in $\mathcal P_{\bar f}(\X \times \U) \times \mathcal P_g(\Y)$ with limit $(\bar \mu,\nu)$.
    Then, every coupling $\pi \in \Pi_M(\bar \mu,\nu)$ is the limit in the adapted weak topology of a sequence $(\pi^k)_{k \in \N}$ with $\pi^k \in \Pi_M(\bar \mu^k,\nu^k)$.
\end{theorem}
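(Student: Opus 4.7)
The plan is to follow the construction from \cite{BJMP22} while carrying the additional $u$-parameter throughout, and to upgrade convergence to the refined topology $\mathcal P_{\bar f \oplus \hat g}$. Conceptually, $u$ is merely a passive label: the martingale constraint $\mean(\pi_{x,u}) = x$ does not involve $u$, so once all objects are tracked measurably in $u$, the scheme of \cite{BJMP22} should go through.

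Given $\pi \in \Pi_M(\bar\mu, \nu)$ with disintegration $\pi(dx, du, dy) = \bar\mu(dx, du) \pi_{x,u}(dy)$, I would first fix an optimal $\mathcal P_{\bar f}$-coupling $\alpha^k \in \Pi(\bar\mu, \bar\mu^k)$. By convergence of $(\bar\mu^k)_{k \in \N}$ in $\mathcal P_{\bar f}(\X \times \U)$ and Lemma \ref{lem:eder}, $\alpha^k$ concentrates on the diagonal with vanishing $p$-th moment. Transporting the kernel of $\pi$ through $\alpha^k$, define the candidate
\[
  \hat\pi^k(d\tilde x, d\tilde u, dy) := \int \alpha^k(dx, du, d\tilde x, d\tilde u) \, \pi_{x,u}(dy).
\]
Then $\hat\pi^k$ has first marginal exactly $\bar\mu^k$, its second marginal $\hat\nu^k$ converges to $\nu$ in $\mathcal P_g$, and its conditional mean $\int y \, \hat\pi^k_{\tilde x, \tilde u}(dy) = \int x\, \alpha^k_{\tilde x, \tilde u}(dx, du)$ differs from $\tilde x$ by a bias vanishing in $L^p(\bar\mu^k)$.

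The main step---and the principal obstacle---is to correct $\hat\pi^k$ into a genuine element $\pi^k \in \Pi_M(\bar\mu^k, \nu^k)$ without losing proximity to $\pi$ in the adapted sense. The \cite{BJMP22} construction achieves this in the case $|\U| = 1$ by a careful redistribution that simultaneously enforces the martingale property and the exact second marginal $\nu^k$, with quantitative control of the perturbation in terms of the mean bias and the marginal mismatch. Adapting it here amounts to rerunning that argument with all selections parameterized measurably by $u$ and with quantitative estimates formulated in the refined topologies (where moment convergence of the marginals provides the required uniform integrability). Because the martingale constraint does not couple different values of $u$, the fiber-wise redistribution in $(x,u)$ is essentially a measurable parametrization of the $\U=\{\ast\}$ scheme; the nontrivial bookkeeping is to maintain a single transport plan between $\hat\nu^k$ and $\nu^k$ so that the second marginal is corrected globally rather than per fiber.

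Finally, convergence in the adapted weak topology reduces, via the identity $\AW_p^p(\pi^k, \pi) = \W_p^p(J(\pi^k), J(\pi))$, to showing $J(\pi^k) \to J(\pi)$ in $\mathcal P_{\bar f \oplus \hat g}$. Weak convergence of $J(\pi^k)$ follows from the construction together with Lemma \ref{lem:eder}, and convergence of the refined moment is immediate from
\[
  \pi^k(\bar f \oplus g) = \bar\mu^k(\bar f) + \nu^k(g) \longrightarrow \bar\mu(\bar f) + \nu(g) = \pi(\bar f \oplus g).
\]
Combining these gives convergence of $\pi^k$ in $\mathcal P_{\bar f \oplus g}$, and a final application of Lemma \ref{lem:eder} to the map $(x,u,y) \mapsto (x,u,\pi^k_{x,u})$ upgrades this to convergence of $J(\pi^k)$ in the required refined topology.
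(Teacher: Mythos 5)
Your proposal has a genuine gap precisely at the step you yourself flag as the ``principal obstacle.'' Transporting the kernel through an optimal coupling $\alpha^k\in\Pi(\bar\mu,\bar\mu^k)$ does give a candidate $\hat\pi^k$ with first marginal $\bar\mu^k$, but correcting it into an element of $\Pi_M(\bar\mu^k,\nu^k)$ is not a matter of ``rerunning the \cite{BJMP22} scheme with all selections parameterized measurably by $u$.'' The martingale constraint indeed acts fiber-by-fiber in $(x,u)$, but the second-marginal constraint $\int \pi^k_{x,u}(\cdot)\,\bar\mu^k(dx,du)=\nu^k$ is a single \emph{global} constraint tying together all fibers simultaneously. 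There is no obvious way to produce a measurable family of fiber-wise targets that both dominates $\delta_x$ in convex order (to allow a martingale kernel) and integrates against $\bar\mu^k$ to exactly $\nu^k$ while staying close to $\pi_{x,u}$. Your aside that ``the nontrivial bookkeeping is to maintain a single transport plan between $\hat\nu^k$ and $\nu^k$'' names the problem but does not solve it; it is the entire content of the theorem.

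The paper overcomes this by two structural reductions that your proposal omits. First, Lemma~\ref{lem:core_theorem_single_irreducible_to_general} reduces to the case where $(\proj_1\bar\mu,\nu)$ is irreducible. Second—and this is the key idea—Lemma~\ref{lem:core_theorem_finite_sum} shows that ``simple'' couplings, i.e.\ those of the form $\pi(dx,du,dy)=\sum_{j=1}^J \mathbbm 1_{\U_j}(u)\bar\mu(dx,du)K_{j,x}(dy)$ for a finite partition $(\U_j)_{j=1}^J$ of $\U$ into $\proj_2\bar\mu$-continuity sets, are $\AW_1$-dense in $\Pi_M(\bar\mu,\nu)$. This collapses the continuum of $u$-fibers into finitely many classical martingale-transport problems (one per partition cell), and then Proposition~\ref{prop:approximation_finite_pairs} is exactly the statement that one can split $\nu^k$ compatibly into $\nu^k_1,\dots,\nu^k_J$ with $\mu^k_j\le_{cx}\nu^k_j$, $\sum_j\nu^k_j=\nu^k$, and $\nu^k_j\to\nu_j$—precisely the compatibility your proposal leaves unresolved. (Irreducibility is a hypothesis of that proposition, which is why the first reduction is also essential.) Only after these reductions can the base result \cite[Theorem~2.6]{BJMP22} be applied per piece, followed by the gluing estimates of \cite[Lemmas~3.6,~3.7]{BJMP22}. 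As a smaller point, your final appeal to Lemma~\ref{lem:eder} to upgrade convergence of $\pi^k$ in $\mathcal P_{\bar f\oplus g}$ to convergence of $J(\pi^k)$ in $\mathcal P_{\bar f\oplus\hat g}$ is not a valid application: that lemma concerns a single fixed measurable map $\varphi$ and couplings of a measure with itself, whereas the maps $(x,u)\mapsto\pi^k_{x,u}$ vary with $k$.
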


The proof of Theorem \ref{thm:approximation} relies on the next three auxiliary results, that are Lemma \ref{lem:core_theorem_single_irreducible_to_general}, Lemma \ref{lem:core_theorem_finite_sum}, and Proposition \ref{prop:approximation_finite_pairs}.

Here we recall the notion of a pair of measures being \emph{irreducible} and refer to \cite[Appendix A]{BeJu16} for further details.
When $\mu \in \mathcal M_1(\R)$, we denote by $u_\mu$ its potential function, that is the map defined by $u_\mu(y) = \int_\R |y-x| \, \mu(dx)$ for $y \in \R$.
A pair $(\mu,\nu)$ of finite positive measures in convex order is called irreducible if $I := \{ x \in \R : u_\mu(x) < u_\nu(x) \}$ is an interval and $\mu(I) = \mu(\R)$.
For any pair $(\mu,\nu) \in \mathcal P_1(\R)^2$, $\mu \le_{cx} \nu$, there exists $N \subseteq \mathbb N$ and a sequence $(\mu_n,\nu_n)_{n \in N}$ of irreducible pairs of subprobability measures in convex order such that
\begin{equation} \label{eq:}
    \mu = \eta + \sum_{n \in N} \mu_n\quad \text{and}\quad\nu = \eta + \sum_{n \in N} \nu_n,
\end{equation}
where $(\{ u_{\mu_n} < u_{\nu_n} \})_{n \in N}$ are pairwise disjoint and $\eta = \mu |_{\{ u_\mu = u_\nu\}}$, c.f.\ \cite[Theorem A.4]{BeJu16}.
The sequence $(\mu_n,\nu_n)_{n \in N}$ is unique up to rearrangement of the pairs and is called the decomposition of $(\mu,\nu)$ into irreducible components.
The next lemma generalizes \cite[Proposition 2.4]{BJMP22} to the current setting where we have to keep track of the extra coordinate $u \in \U$.

\begin{lemma}\label{lem:core_theorem_single_irreducible_to_general} 
Let $(\bar \mu^k,\nu^k)_{k \in \N}$, $\proj_1 \bar \mu^k \le_{cx} \nu^k$, be a convergent sequence in $\mathcal P_1(\R \times \U) \times \mathcal P_1(\R)$ with limit $(\bar \mu,\nu)$ and write $\proj_1 \bar \mu =: \mu$.
Let $(\mu_n, \nu_n)_{n \in N}$ where $N \subseteq \N$ be the decomposition of $(\mu,\nu)$ into irreducible components.
Then, for every $k \in \N$, there exists a decomposition of $(\bar\mu^k,\nu^k)$ into pairs of subprobability measures $(\bar\mu^k_n,\nu^k_n)_{n\in N}$, $(\bar\eta^k,\upsilon^k)$ such that $\proj_1 \bar \mu^k_n\le_c\nu^k_n$ for each $n\in N$, $\proj_1 \bar \eta^k\le_c\upsilon^k$ and 
\begin{gather*}
   \bar\eta^k+\sum_{n\in N}\bar\mu^k_n=\bar\mu^k,\quad \upsilon^k+\sum_{n\in N}\nu^k_n=\nu^k,\\ 
   \lim_{k \to \infty} \sum_{n \in N} \W_1(\bar\mu^k_n,\mu_n \times\bar\mu_x)+\W_1(\nu^k_n,\nu_n) + \W_1(\bar\eta^k,\eta\times\bar\mu_x)+\W_1(\upsilon^k,\eta) = 0.
\end{gather*}
\end{lemma}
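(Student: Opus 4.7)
The strategy is to lift the analogous scalar statement, namely \cite[Proposition 2.4]{BJMP22} applied to the marginal sequence, to the $\U$-valued setting via the disintegration $\bar\mu^k(dx,du) = \mu^k(dx)\bar\mu^k_x(du)$. Writing $\mu^k := \proj_1 \bar\mu^k$, I first apply the scalar result to $(\mu^k,\nu^k) \to (\mu,\nu)$ in $\mathcal P_1(\R)^2$ to produce, for each $k \in \N$, sub-measures $\mu^k_n \le \mu^k$, $\nu^k_n \le \nu^k$, $\eta^k \le \mu^k$, $\upsilon^k \le \nu^k$ with $\mu^k = \eta^k + \sum_n \mu^k_n$, $\nu^k = \upsilon^k + \sum_n \nu^k_n$, pairwise convex order, and $\W_1$-convergence to $\mu_n, \nu_n, \eta, \eta$ respectively. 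Recall that $\mu_n = \mu|_{I_n}$ for the irreducible intervals $(I_n)_{n \in N}$ of $(\mu,\nu)$.

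To lift the decomposition to $\bar\mu^k$, I use the Radon--Nikodym derivatives $h^k_n := d\mu^k_n/d\mu^k \in [0,1]$ and $h^k_\eta := d\eta^k/d\mu^k \in [0,1]$ (well-defined since $\mu^k_n, \eta^k \le \mu^k$), and set
\[
    \bar\mu^k_n(dx,du) := h^k_n(x)\, \bar\mu^k(dx,du), \qquad \bar\eta^k(dx,du) := h^k_\eta(x)\, \bar\mu^k(dx,du).
\]
The partition-of-unity identity $\sum_n h^k_n + h^k_\eta = 1$ $\mu^k$-a.s.\ yields $\bar\eta^k + \sum_n \bar\mu^k_n = \bar\mu^k$, while the first marginals $\proj_1 \bar\mu^k_n = \mu^k_n$ and $\proj_1 \bar\eta^k = \eta^k$ inherit the convex-order inequalities from the scalar step.

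The main obstacle is to upgrade the scalar convergence $\W_1(\mu^k_n,\mu_n) \to 0$ to the full $\W_1(\bar\mu^k_n, \mu_n \times \bar\mu_x) \to 0$, which is not automatic because the disintegrations $\bar\mu^k_x$ need not depend continuously on $k$. My approach goes through weak convergence: the sequence $(\bar\mu^k_n)$ is tight because $\bar\mu^k_n \le \bar\mu^k$ and $(\bar\mu^k)$ is tight; any weak accumulation point $\bar q$ satisfies $\bar q \le \bar\mu$ (testing against nonnegative $C_b$ functions together with $\bar\mu^k \to \bar\mu$ in $\mathcal M_1$) and $\proj_1 \bar q = \mu_n = \mathds 1_{I_n}\cdot\mu$. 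Setting $f := d\bar q/d\bar\mu \in [0,1]$, the identity $\int f(x,u)\,\bar\mu_x(du) = \mathds 1_{I_n}(x)$ for $\mu$-a.e.\ $x$, combined with $0 \le f \le 1$ and the fact that $\bar\mu_x$ is a probability measure, forces $f = \mathds 1_{I_n}$ $\bar\mu$-a.e. Hence the unique accumulation point is $\mathds 1_{I_n}\cdot\bar\mu = \mu_n \times \bar\mu_x$, so $\bar\mu^k_n \to \mu_n \times \bar\mu_x$ weakly.

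To conclude, I upgrade weak to $\W_1$ convergence: since $\bar\mu^k_n \le \bar\mu^k$ and $(\bar\mu^k)$ converges in $\mathcal M_1(\R \times \U)$ (as $\W_1$-convergent probability measures), Lemma \ref{lem:convergence_dominated} gives $\mathcal M_1$-convergence of $\bar\mu^k_n$, which combined with $\bar\mu^k_n(\R\times\U) = \mu^k_n(\R) \to \mu_n(\R) = (\mu_n\times\bar\mu_x)(\R\times\U)$ and Lemma \ref{lem:convergence_subprobabilities} yields the desired $\W_1$-convergence. An identical argument handles $\bar\eta^k \to \eta \times \bar\mu_x$, and summing with the scalar convergences of the $\nu^k_n$ and $\upsilon^k$ completes the proof.
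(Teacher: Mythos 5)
Your proof is correct, but it takes a genuinely different route from the paper. The paper defines $\bar\mu_n := \mu_n\times\bar\mu_x$ and $\bar\eta := \eta\times\bar\mu_x$, picks a $\W_1$-optimal coupling $\hat\pi^k\in\Pi(\bar\mu,\bar\mu^k)$, and transports each piece through $\hat\pi^k$ to obtain $\bar\mu^k_n$ and $\bar\eta^k$; the inequality $\sum_n\W_1(\bar\mu^k_n,\bar\mu_n)+\W_1(\bar\eta^k,\bar\eta)\le\W_1(\bar\mu^k,\bar\mu)$ is then immediate. It next fixes some $\bar\pi^k\in\Pi_M(\bar\mu^k,\nu^k)$ and sets $\nu^k_n:=(\bar\pi^k_{x,u})_\#\bar\mu^k_n$, $\upsilon^k:=(\bar\pi^k_{x,u})_\#\bar\eta^k$, and the convergence $\W_1(\nu^k_n,\nu_n)\to 0$ is obtained via a Prokhorov/compactness argument on $\bar\pi^k_n\le\bar\pi^k$ combined with Lemma \ref{lem:convergence_dominated}. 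You instead invoke the scalar statement of \cite[Proposition 2.4]{BJMP22} on $(\mu^k,\nu^k)$ as a black box, getting $\nu^k_n$ (and its $\W_1$-convergence) for free, and lift only $\mu^k_n$ to $\bar\mu^k_n=(d\mu^k_n/d\mu^k)\,\bar\mu^k$; the nontrivial step moves to identifying the weak accumulation point of $(\bar\mu^k_n)$, which you do by the density argument: $\bar q\le\bar\mu$, $\proj_1\bar q=\mu_n=\mathds 1_{I_n}\mu$, and $0\le d\bar q/d\bar\mu\le 1$ force $d\bar q/d\bar\mu=\mathds 1_{I_n}$. This is a clean, more modular argument that avoids re-running the martingale-coupling step; the price is that the compactness argument shifts from the $\nu$-side to the $\bar\mu$-side, and it relies on the scalar proposition having precisely the form you describe (a decomposition with $\mu^k_n\le\mu^k$, $\eta^k\le\mu^k$), which it does.

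One small imprecision to fix: you write $\bar\mu^k_n(\R\times\U)=\mu^k_n(\R)\to\mu_n(\R)$, but for the paper's $\W_1$ between elements of $\mathcal M^\ast_1$ to be defined (Subsection \ref{ssec:subprobability}) you need equal masses, not merely convergent ones. In fact the scalar construction of \cite[Proposition 2.4]{BJMP22} yields $\mu^k_n(\R)=\mu_n(\R)$ exactly (just as the paper's construction here does), so your lift inherits this equality of masses and there is no real gap — but the equality should be asserted, not the limit.
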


\begin{proof}
We set $\bar\eta(dx,du)=\eta(dx)\bar\mu_x(du)$ and $\bar\mu_n(dx,du)=\mu_n(dx)\bar\mu_x(du)$ for $n\in N$. Let us select $\W_1$-optimal couplings $\hat \pi^k \in \Pi(\bar \mu,\bar \mu^k )$ and define
	\[
		\bar \mu_n^k(dx,du) := \int_{\hat x, \hat u} \hat \pi^k_{\hat x,\hat u}(dx,du)	\, \bar \mu_n(d\hat x,d\hat u), \quad
		\bar \eta^k(dx,du) := \int_{\hat x, \hat u} \hat \pi^k_{\hat x, \hat u}(dx,du) \, \bar \eta(d\hat x,d\hat u).
	\]
We have $$\W_1(\bar \eta^k,\bar \eta)+\sum_{n\in N}\W_1(\bar \mu^k_n,\bar \mu_n)\le\W_1(\bar \mu^k,\bar \mu)\underset{k\to+\infty}{\longrightarrow}0.$$
	Pick any $\bar \pi^k \in \bar\Pi_M(\bar \mu^k, \nu^k)$, set
	\begin{gather*}
		\nu_n^k(dy) := \int_{x,u} \bar \pi^k_{x,u}(dy) \, \bar \mu^k_n(dx,du), \quad
		\upsilon^k(dy) := \int_{x,u} \bar \pi^k_{x,u}(dy) \, \bar \eta^k(dx,du),
		\\
		\bar \pi_n^k(dx,du,dy) := \bar \mu^k_n(dx,du) \bar \pi^k_{x,u}(dy) \in \Pi_M(\bar \mu^k_n, \nu^k_n).
	\end{gather*}
        Let $n\in N$. Since $(\nu^k = \upsilon^k + \sum_{n \in N} \nu_n^k)_{k \in \N}$ converges to $\nu$ in $\mathcal W_1$, we have tightness of $(\nu^k_n)_{k \in \N}$.
		As the marginals are tight, we can pass by Prokhorov's theorem to a subsequence and assume that, as $k\to\infty$,  $(\bar \pi^k_n)_{k \in \N}$ converges weakly to $\tilde \pi_n \in \Pi_M(\bar \mu_n, \tilde \nu_n)$ where $\tilde \nu_n \in \mathcal M_1(\R)$.
		At the same time, as $(\bar \mu^k)_{k \in \N}$ and $(\nu^k)_{k \in \N}$ are $\mathcal W_1$-convergent, by passing to a subsequence we can additionally assume convergence of $(\bar \pi^k)_{k \in \N}$ in $\mathcal W_1$ to some $\bar \pi \in \Pi_M(\bar \mu,\nu)$.
		Since $\bar\pi^k_n\le \bar\pi^k$ for each $k\in\N$, passing to weak limits, we have $\tilde \pi_n\le \bar\pi$, which, in view of $\tilde \pi_n(dx,du,\R)=\bar \mu_n(dx,du)$, implies $\tilde \pi_n(dx,du,dy) = \bar \mu_n(dx,du) \bar \pi_{x,u}(dy)$ so that $\tilde \nu_n=\nu_n$. By Lemma \ref{lem:convergence_dominated}, we get that $\mathcal W_1(\bar \pi^k_n,\tilde \pi_n)$ goes to $0$ as $k\to\infty$. Hence, $\mathcal W_1(\nu^k_n,\nu_n)$ also goes to $0$.		
		The same argument applies to deal with $\mathcal W_1(\upsilon^k,\eta)$.
\end{proof}

In order to show Theorem \ref{thm:approximation}, it turns out to be beneficial to first demonstrate that a family of couplings with a simpler structure is already dense.
We say that a coupling $\pi \in \Pi_M(\bar \mu,\nu)$ is simple if there is $J \in \N$, a measurable partition $(\U_j)_{j = 1}^J$ of $\U$ into $\proj_2 \bar \mu$-continuity sets and, for $j \in \{1,\ldots,J\}$, a martingale kernel $(K_{j,x})_{x \in \R}$ such that
\begin{equation}\label{eq:bar_pi_finite_sum}
		\pi(dx,du,dy) = \sum_{j = 1}^J \mathbbm 1_{\U_j}(u)\bar \mu(dx,du) \, K_{j,x}(dy).
\end{equation}
Put differently, one may say $\pi$ is simple if there exist (classical) martingale couplings $\pi^j \in \Pi_M(\mu,\nu_j)$, $j \in \{1,\ldots,J\}$, and a measurable partition $(\U_j)_{j = 1}^J$ of $\U$ in $\proj_2 \bar \mu$-continuity sets such that 
\[ \pi(dx,du,dy) = \sum_{j = 1}^J \pi^j(dx,dy) \bar \mu_x(du \cap \U_j). \]
The next lemma establishes that these simple couplings are already dense in $\Pi_M(\bar\mu,\nu)$.

\begin{lemma}\label{lem:core_theorem_finite_sum}
Let $\bar\mu\in\mathcal P_1(\R\times\U)$ and $\nu\in\mathcal P_1(\R)$.
Then, the set of couplings satisfying \eqref{eq:bar_pi_finite_sum} is dense in $\Pi_M(\bar \mu,\nu)$ w.r.t.\ the adapted weak topology.
\end{lemma}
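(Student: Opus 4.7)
The plan is to start from an arbitrary $\pi \in \Pi_M(\bar\mu,\nu)$ and disintegrate it as $\pi(dx,du,dy) = \bar\mu(dx,du)\pi_{x,u}(dy)$, where $\mean(\pi_{x,u})=x$ for $\bar\mu$-a.e.\ $(x,u)$. Writing $\mu := \proj_1\bar\mu$ and $\bar\mu(dx,du)=\mu(dx)\bar\mu_x(du)$, the idea is to build simple approximations of $\pi$ by averaging $u\mapsto\pi_{x,u}$ over elements of a refining sequence of finite partitions of $\U$ into $\proj_2\bar\mu$-continuity sets, and then invoke a conditional expectation / martingale convergence argument in $u$.

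Since $(\U,d_\U)$ is Polish and $\proj_2\bar\mu$ is a finite (hence inner regular) Borel measure, one can construct inductively a refining sequence of finite Borel partitions $(\U_j^{(n)})_{j=1}^{J_n}$ of $\U$ such that each $\U_j^{(n)}$ is a $\proj_2\bar\mu$-continuity set and the $\sigma$-algebras $\mathcal B_n := \sigma(\U_1^{(n)},\dots,\U_{J_n}^{(n)})$ separate points modulo $\proj_2\bar\mu$-null sets. (Concretely, cover compact sets of $\proj_2\bar\mu$-mass at least $1-1/n$ by balls of radius at most $1/n$ whose spheres avoid the at most countable set of radii with positive $\proj_2\bar\mu$-measure, and refine against all previously constructed cells.) For each $n,j$, set
\[
K_{j,x}^{(n)}(dy) := \frac{1}{\bar\mu_x(\U_j^{(n)})}\int_{\U_j^{(n)}}\pi_{x,u}(dy)\,\bar\mu_x(du)
\]
whenever $\bar\mu_x(\U_j^{(n)})>0$, and $K_{j,x}^{(n)} := \delta_x$ otherwise. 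A short Fubini check shows that $K_{j,\cdot}^{(n)}$ is a martingale kernel, and the candidate approximation
\[
\pi^{(n)}(dx,du,dy) := \sum_{j=1}^{J_n}\1_{\U_j^{(n)}}(u)\,\bar\mu(dx,du)\,K_{j,x}^{(n)}(dy)
\]
then belongs to $\Pi_M(\bar\mu,\nu)$, the third marginal collapsing onto $\nu$ after summation and Fubini, and fits the template \eqref{eq:bar_pi_finite_sum}.

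For the adapted weak convergence, observe that on $\U_j^{(n)}$ the disintegration of $\pi^{(n)}$ is $\pi^{(n)}_{x,u}=K_{j,x}^{(n)}$, so for every $\varphi\in C_b(\R)$,
\[
\pi^{(n)}_{x,u}(\varphi)=\E_{\bar\mu_x}\!\left[\pi_{x,\cdot}(\varphi)\,\middle|\,\mathcal B_n\right](u).
\]
By Lévy's upward martingale convergence theorem applied under the probability $\bar\mu_x$, this expression converges to $\pi_{x,u}(\varphi)$ for $\bar\mu_x$-a.e.\ $u$ and $\mu$-a.e.\ $x$. Selecting a countable convergence-determining family $(\varphi_m)_{m\in\N}\subset C_b(\R)$ and applying Fubini yields $\pi^{(n)}_{x,u}\to\pi_{x,u}$ weakly in $\mathcal P(\R)$ for $\bar\mu$-a.e.\ $(x,u)$. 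Since $J(\pi^{(n)})$ and $J(\pi)$ are the push-forwards of $\bar\mu$ under $(x,u)\mapsto(x,u,\pi^{(n)}_{x,u})$ and $(x,u)\mapsto(x,u,\pi_{x,u})$, dominated convergence against any $\Phi\in C_b(\R\times\U\times\mathcal P(\R))$ gives $J(\pi^{(n)})\to J(\pi)$ weakly, which is the adapted weak convergence of $\pi^{(n)}$ to $\pi$.

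The main technical obstacle is the partition construction: one must simultaneously arrange (i) the refinement needed for $\bigcup_n\mathcal B_n$ to generate the Borel $\sigma$-algebra modulo $\proj_2\bar\mu$-null sets (without which Lévy's theorem does not identify the limit as $\pi_{x,u}$), and (ii) the $\proj_2\bar\mu$-continuity-set property demanded by the definition of simple coupling. Both are achievable through the Radon-regularity argument outlined above, but it is the only step requiring genuine care; the remaining ingredients—Fubini, martingale convergence, and the continuous mapping theorem through $J$—are then routine.
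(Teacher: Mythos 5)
Your proof is correct, but it takes a genuinely different route from the paper's. The paper's argument is a one-shot $\epsilon$-argument: fix $\epsilon>0$, build a single finite partition of $\U$ into $\proj_2\bar\mu$-continuity sets with all but one cell of diameter $\le\epsilon/2$ (the last cell absorbing a tail of small integrated $d_\U(\cdot,u_0)$-mass), average the disintegration kernel over cells exactly as you do, and then control $\AW_1(\bar\pi,\bar\pi^J)$ by keeping $x$ fixed and recoupling $u$ within cells; the resulting expression is bounded by $\int \W_1(\bar\pi_{x,u},\bar\pi_{x,\hat u})\,\gamma(dx,du,d\hat x,d\hat u)$ for a coupling $\gamma\in\Pi(\bar\mu,\bar\mu)$ that moves mass by at most $\epsilon$, and Eder's Lemma \ref{lem:eder} forces this to vanish with $\epsilon$. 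That delivers $\AW_1$-density directly. Your approach instead constructs a refining sequence of finite partitions whose generated $\sigma$-algebra $\mathcal B_\infty$ contains $\mathcal B(\U)$ modulo $\proj_2\bar\mu$-null sets, identifies $\pi^{(n)}_{x,u}(\varphi)=\E_{\bar\mu_x}[\pi_{x,\cdot}(\varphi)\mid\mathcal B_n](u)$, and invokes L\'evy's upward martingale convergence theorem fibrewise to get $\pi^{(n)}_{x,u}\to\pi_{x,u}$ weakly for $\bar\mu$-a.e.\ $(x,u)$; bounded convergence through $J$ then yields adapted weak convergence. The two approaches trade off: the paper's is quantitative (one partition per $\epsilon$, an explicit $\AW_1$ bound) and avoids any generation-of-$\sigma$-algebra argument, at the cost of invoking Eder's lemma; yours is conceptually transparent (the cell averages really are conditional expectations, and the convergence is literally martingale convergence) but the burden shifts to verifying that the refining partitions generate $\mathcal B(\U)$ modulo $\bar\mu_x$-null sets for $\mu$-a.e.\ $x$, which you correctly flag as the delicate point and which requires passing the null set from $\proj_2\bar\mu=\int\bar\mu_x\,\mu(dx)$ down to the fibres $\bar\mu_x$. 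One small remark: your argument as written gives adapted weak convergence, whereas the paper's gives $\AW_1$-convergence; however, since $\int\int|y|\,\pi^{(n)}_{x,u}(dy)\,\bar\mu(dx,du)=\nu(|\cdot|)$ is independent of $n$, the first moments of $J(\pi^{(n)})$ are stable and the two modes of convergence coincide here, so the discrepancy is immaterial.
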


\begin{proof}
  We denote by $\lambda=\proj_2\bar\mu \in \mathcal P_1(\U)$.
  Let $u_0\in \U$ and $\varepsilon>0$.
  We claim that there is a finite partition $(\U_j)_{j = 1}^J$, $J \in \N$, of $\U$ into $\lambda$-continuity sets such that
  \begin{equation}
	\label{eq:core_thm_fin_sum.prop}
	\sup \{ d_\U(u, \hat u) \colon u, \hat u \in \U_j \} \le \frac{\epsilon}{2}
	\mbox{ for }
	j \in \{1,\ldots,J - 1\},
	\mbox{ and }
	\int_{\U_J} d_\U(u,u_0) \, \lambda(du) \le \frac{\epsilon}{4}.
  \end{equation}
  To this end, note that since the map $u \mapsto d_\U(u,u_0)$ is integrable w.r.t.\ $\lambda$, we can choose $M_\epsilon \in [0,+\infty)$ with
  \[
	\int_{\{u \in \U \colon d_\U(u,u_0)>M_\varepsilon\}}d_\U(u,u_0) \, \lambda(du)\le \frac{\varepsilon}{8}.
  \]
  By inner regularity of $\lambda$ there exists a compact subset $\mathcal K$ of $B := \{ u \in \U \colon d_\U(u,u_0) \le M_\epsilon \}$ such that $\lambda(B \setminus \mathcal K)\le\frac{\varepsilon}{8M_\varepsilon}$.
  Therefore, we have
  \[
  	\int_{B \setminus \mathcal K} d_\U(u,u_0) \, \lambda(du) \le \lambda(B \setminus \mathcal K) M_\epsilon \le \frac{\epsilon}{8}.
  \]
  Next, we choose for each $u \in \mathcal K$ a radius $r_u \in (0,\frac{\epsilon}{4}]$ such that the boundary of the ball $B_{r_u}(u) := \{ \hat u \in \U \colon d_\U(u,\hat u) < r_u \}$ has zero measure under $\lambda$.
  The family $(B_{r_u}(u))_{u \in \mathcal K}$ is an open cover of the compact set $\mathcal K$, which permits us to extract from this family a finite subcover of $\mathcal K$ denoted by $(B_j)_{j = 1}^I$, $I\in \N$.
  Let $J := I + 1$, $\U_J := \bigcap_{j = 1}^J B_j^\textrm{c}\subset \mathcal K^c$, and set recursively, $\U_j := B_j \cap\left\{\bigcup_{i = 1}^{j - 1} B_i\right\}^c$ for $j \in \{1,\ldots,J-1\}$.
  By this procedure we have constructed a partition $(\U_j)_{j = 1}^J$ of $\U$ into measurable sets.
  Moreover, as for each $i \in \{1,\ldots,J\}$ the boundary of $\U_i$ is contained in the union of the boundaries of the balls $(B_j)_{j = 1}^J$, it must have zero $\lambda$-measure.
  Finally, for each $j \in \{1,\ldots,J-1\}$ we get
  \[
  	\sup \{ d_\U(u, \hat u) \colon u, \hat u \in \U_j \} \le \sup \{ d_\U(u,\hat u) \colon u, \hat u \in B_j \} \le \frac{\epsilon}{2},
  \]
  and compute
  \[
  	\int_{\U_J} d_\U(u,u_0) \, \lambda(du) \le \int_{B^\textrm{c}} d_\U(u,u_0) \, \lambda(du) + \int_{B \setminus \mathcal K} d_\U(u,u_0) \, \lambda(du) \le \frac{\epsilon}{8} + \frac{\epsilon}{8} = \frac{\epsilon}{4}.
  \]
  We have shown the claim \eqref{eq:core_thm_fin_sum.prop}.

  Consider the disintegration kernel $\bar \mu_x$ such that $\bar \mu(dx,du) = \mu(dx) \bar \mu_x(du)$, and, for $j \in \{1,\ldots,J\}$, denote by $\bar \mu_j(dx,du)$ the restrictions $\mathbbm 1_{\R\times \U_j }(x,u) \bar \mu(dx,du)$.
  Since $\bar \mu_j(dx,du)$-a.e.\ $\bar \mu_x(\U_j) > 0$ we can define
  \[
  	\bar \pi^J(dx,du,dy) := \sum_{j = 1}^J \bar \mu_j(dx,du) K_{j,x}(dy)
	\mbox{ where }
	K_{j,x}(dy) := \int_{\U_j} \bar \pi_{x,u}(dy) \, \frac{\bar \mu_x(du)}{\bar \mu_x(\U_j)},
  \]
  and remark that $\bar \pi^J \in \Pi_M(\bar \mu,\nu)$.
  The last step is to estimate the $\AW_1$-distance between $\bar \pi$ and $\bar \pi^J$.

  Using that $(\id,\id)_\# \bar \mu$ is an admissible coupling in the minimization problem that constitutes the $\AW_1$-distance between $\bar \pi$ and $\bar \pi^J$, and Jensen's inequality (see for example \cite[Proposition A.9]{BeJoMaPa21b}), we obtain the estimate
  \begin{align*}
	\nonumber
	\AW_1(\bar \pi, \bar \pi^J) &\le \sum_{j = 1}^J \int_{\R\times\U_j} \W_1(\bar \pi_{x,u}, K_{j,x}) \, \bar\mu(dx,du)
	\\
	&\le
\sum_{j = 1}^J \int_{\R\times\U_j}  \int_{\U_j} \W_1(\bar \pi_{x,u}, \bar \pi_{x,\hat u}) \, \frac{\bar \mu_{x}(d \hat u)}{\bar \mu_x(\U_j)} \, \bar \mu(dx,du)\\
    &=\int \mathcal W_1(\bar \pi_{x,u}, \bar \pi_{\hat u, \hat x}) \, \gamma(dx,du,d \hat x,d\hat u) 
  \end{align*}
  where $\gamma := \sum_{j = 1}^J \gamma_j$ with the subprobability couplings $\gamma_j$ defined for  $j \in \{1,\ldots, J\}$ by
\[
	\gamma_j := ((x,u,\hat u) \mapsto (x,u,x,\hat u))_\# \left( \frac{\bar \mu_x(d \hat u)}{\bar \mu_x(\U_j)} \bar \mu_j(dx,du) \right) \in \Pi(\bar \mu_j, \bar \mu_j).
\]
We have $\gamma \in \Pi(\bar \mu, \bar \mu)$ and, by \eqref{eq:core_thm_fin_sum.prop},
\begin{align}
	\nonumber
	\int d_\U(u,\hat u) + |x - \hat x| \, &\gamma(dx,du,d\hat x,d\hat u)
	=
	\sum_{j = 1}^J \int d_\U(u,\hat u) \, \gamma_j(dx,du,d\hat x,d\hat u)
	\\
	\nonumber
	&\le \frac{\epsilon}{2} \sum_{j = 1}^{J-1} \gamma_j(\R \times \U \times \R\times \U )+\int d_\U(u, u_0) + d_\U(u_0, \hat u) \, \gamma_J(dx,du,d\hat x,d\hat u) \\	
	\label{eq:core_thm_fin_sum.est3}
	&\le
\frac{\epsilon}{2}+	\frac{\epsilon}{4} + \frac{\epsilon}{4}   = \epsilon.
\end{align}
Since $\epsilon>0$ is arbitrary, Lemma \ref{lem:eder} gives the conclusion.
\end{proof}

Finally, we also require the following approximation result that concerns the marginals.

\begin{proposition}
	\label{prop:approximation_finite_pairs}
	Let $(\mu^k, \nu^k)_{k \in \N}$, $\mu^k \le_{cx} \nu^k$, be a sequence in $\mathcal P_1(\R) \times \mathcal P_1(\R)$ with limit $(\mu, \nu)$ being irreducible.
    For $1 \le j \le J \in \N$, let $(\mu^k_j)_{k \in \N}$ be a convergent sequence in $\mathcal M_1(\R)$ with limit $\mu_j$ and $\sum_{j = 1}^N \mu^k_j = \mu^k$.
    Let $(\nu_j)_{j = 1}^J$, $\mu_j \le_{cx} \nu_j$, be a family in $\mathcal M_1^\ast(\R)$ such that $\nu = \sum_{j = 1}^J \nu_j$. 
    Then, for $1 \le j \le J$, there exist a convergent sequence $(\nu^k_j)_{k \in \N}$ in $\mathcal M_1(\R)$ with limit $\nu_j$ such that
	\begin{equation}
        \mu_j^k \leq_c \nu^k_j\quad\text{and}\quad\sum_{j = 1}^J \nu^k_j = \nu^k.\label{eqnukj}
    \end{equation}
\end{proposition}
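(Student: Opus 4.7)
The plan is to construct $(\nu^k_j)_{k \in \N}$ as the second marginals of a carefully chosen decomposition of an approximating martingale coupling between $\mu^k$ and $\nu^k$. First, I would pick martingale couplings $\chi_j \in \Pi_M(\mu_j, \nu_j)$ (which exist since $\mu_j \le_{cx} \nu_j$) and assemble $\chi := \sum_{j=1}^J \chi_j \in \Pi_M(\mu,\nu)$. The non-extended approximation theorem from \cite{BJMP22}, applied to the convergent marginals $(\mu^k,\nu^k) \to (\mu,\nu)$, then produces a sequence $\chi^k \in \Pi_M(\mu^k,\nu^k)$ converging to $\chi$ in the adapted Wasserstein topology. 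Splitting $\chi^k = \sum_{j=1}^J \chi^k_j$ in such a way that $\proj_1 \chi^k_j = \mu^k_j$, and setting $\nu^k_j := \proj_2 \chi^k_j$, yields automatically the sum constraint $\sum_j \nu^k_j = \nu^k$ and the convex order $\mu^k_j \le_{cx} \nu^k_j$, since each $\chi^k_j$ is a martingale sub-coupling with first marginal $\mu^k_j$.

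The crux is arranging the splitting so that $\nu^k_j \to \nu_j$. The naive choice $\chi^k_j := (d\mu^k_j/d\mu^k) \cdot \chi^k$ has weak limit $(d\mu_j/d\mu) \cdot \chi$, which equals $\chi_j$ only when the $\mu_j$ are mutually singular; for overlapping $\mu_j$ it is a different martingale coupling of $\mu_j$ with an incorrect second marginal. To cover the general case I would introduce a ``type'' Markov kernel $\alpha^k \colon \R \times \R \to \mathcal P(\{1,\ldots,J\})$ satisfying $\int \alpha^k(x,y,\{j\})\, \chi^k_x(dy) = (d\mu^k_j/d\mu^k)(x)$ for $\mu^k$-a.e.\ $x$, and then define $\chi^k_j := \alpha^k(\cdot,\cdot,\{j\}) \cdot \chi^k$. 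The kernel $\alpha^k$ should be built from the limit density $\alpha_j := d\chi_j/d\chi$ (well defined on the support of $\chi$) by transport along $\mathcal W_1$-optimal couplings between $\mu^k_j$ and $\mu_j$; the irreducibility of $(\mu,\nu)$ enters to ensure the existence of the requisite measurable selections at every level.

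Once weak convergence $\nu^k_j \to \nu_j$ is secured, the upgrade to convergence in $\mathcal M_1(\R)$ follows from a sum-plus-Fatou argument: the identity $\sum_{j=1}^J \int |y|\, \nu^k_j(dy) = \int |y|\, \nu^k(dy)$ converges to $\int|y|\,\nu(dy) = \sum_{j=1}^J \int|y|\,\nu_j(dy)$ by the hypothesis $\nu^k \to \nu$ in $\mathcal M_1$, and Portmanteau gives $\int|y|\,\nu_j(dy) \le \liminf_k \int|y|\,\nu^k_j(dy)$ for every $j$; summing these inequalities forces each to be an equality, which combined with the weak convergence delivers $\nu^k_j \to \nu_j$ in $\mathcal M_1(\R)$. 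I expect the main obstacle to be the rigorous construction of the kernels $\alpha^k$ with the right convergence properties, as this is precisely where the approximation of the coupling must be reconciled with the prescribed sub-marginals.
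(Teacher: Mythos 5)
Your high-level scheme (approximate $\chi=\sum_j\chi_j$ by $\chi^k\in\Pi_M(\mu^k,\nu^k)$ via the main result of \cite{BJMP22}, then split $\chi^k$ into sub-couplings with prescribed first marginals $\mu^k_j$) is a genuinely different route from the paper, which never builds a global limit coupling nor a type kernel; instead it truncates to compact intervals via binomial kernels $q^m_x$, modifies the limit $\nu_j$ to $\tilde\nu_j$ there, then approximates the restricted pieces via the Wasserstein projection in convex order $\Jast$, controls $\sum_j\tilde\nu^k_j\le_c\nu^k$ by comparing potential functions on $[a^m,b^m]$ using irreducibility, and finally realigns the second marginal to $\nu^k$ via the inverse transform martingale coupling of \cite{JoMa20}. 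Your closing Fatou / sum-of-moments argument for upgrading weak to $\mathcal M_1$ convergence is correct (it is essentially the paper's Lemma \ref{lem:convergence_dominated}), and the observation that the naive splitting $\chi^k_j=(d\mu^k_j/d\mu^k)\cdot\chi^k$ fails for overlapping $\mu_j$ is the right diagnosis.

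However, there is a genuine gap, and it is twofold. First, even granting a kernel $\alpha^k$ with $\int\alpha^k(x,y,\{j\})\,\chi^k_x(dy)=(d\mu^k_j/d\mu^k)(x)$, the resulting $\chi^k_j:=\alpha^k(\cdot,\cdot,\{j\})\cdot\chi^k$ is \emph{not} automatically a martingale sub-coupling: to have $\mu^k_j\le_{cx}\nu^k_j$ via your argument you also need the barycentre constraint $\int y\,\alpha^k(x,y,\{j\})\,\chi^k_x(dy)=x\,(d\mu^k_j/d\mu^k)(x)$, which you do not impose. Without it the normalized kernel $\chi^k_{j,x}$ has no reason to have mean $x$ and the convex-order conclusion does not follow. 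Second, and more substantially, the construction of $\alpha^k$ ``by transport of $\alpha_j=d\chi_j/d\chi$ along $\W_1$-optimal couplings between $\mu^k_j$ and $\mu_j$'' is precisely the nontrivial content of the proposition, not a step that can be deferred: $\alpha_j$ is a merely measurable density defined $\chi$-a.e.\ with no continuity, the transport you invoke acts on the $x$-coordinate only whereas $\alpha_j$ depends on $(x,y)$, and any transported candidate must be repaired to satisfy the $2J$ per-$x$ linear constraints (normalization and barycentre) while remaining nonnegative and bounded by $1$ and still producing $\nu^k_j\to\nu_j$. The paper avoids this difficulty altogether by working directly with the measures $\nu_j$ (Step~1: truncation to $[a^m,b^m]$ with $q^m_x$ and the ``cap'' $\wedge_c$; Step~2: $\Jast$-projection plus a potential-function comparison on the compact interval, where irreducibility provides the strict gap $\epsilon u_\mu<\epsilon u_\nu-2\delta$; Step~3: the inverse transform martingale coupling to correct $\sum_j\tilde\nu^k_j$ to exactly $\nu^k$). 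As written, your proposal identifies the obstacle but does not overcome it, so it does not constitute a proof.
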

The proof of Proposition \ref{prop:approximation_finite_pairs} is rather technical and therefore postponed to Subsection \ref{ssec:proof.prop.approximation_finite_pairs}.
On closer inspection of the statement, this is not completely surprising:
in the setting of Proposition \ref{prop:approximation_finite_pairs}, let $(\mu_j)_{j = 1}^J$ and $(\mu_j^k)_{j = 1}^J$ be families of measures with $\mu_j(\{x_j\}) = \mu_j(\R)$ and $\mu_j^k(\{x_j^k\}) = \mu_j^k(\R)$ for some $x_j, x_j^k \in \R$ so that the points $(x_j)_{j = 1}^J$ are distinct.
For $\pi \in \Pi_M(\mu,\nu)$, we define $\nu_j := \pi_{x_j}$.
Invoking Proposition \ref{prop:approximation_finite_pairs} we obtain $(\nu_j^k)_{j = 1}^J$ and set $\pi^k := \sum_{j = 1}^J \mu_j^k\otimes \nu_j^k$.
Since $\mu_j^k$ is concentrated on a single point and $\mu_j^k \le_{cx} \nu_j^k$, $\pi^k$ defines a martingale coupling in $\Pi_M(\mu^k,\nu^k)$ and, as $\nu^k_j \to \nu_j$ and $\mu_j^k \to \mu_j$ in $\mathcal M_1(\R)$, $(\pi^k)_{k \in \N}$ converges in $\AW_1$ to $\pi$.
Hence, we recover in this particular setting the main result of \cite{BJMP22}, which states that, as long as $\mu^k \le_{cx} \nu^k$, $\mu^k \to \mu$, $\nu^k \to \nu$ in $\mathcal W_1$, any martingale coupling in $\Pi_M(\mu,\nu)$ can be approximated in $\AW_1$ by a sequence $(\pi^k)_{k \in \N}$ with $\pi^k \in \Pi_M(\mu^k,\nu^k)$.

\begin{proof}[Proof of Theorem \ref{thm:approximation}]
    By following the reasoning outlined in \cite[Lemma 5.2]{BJMP22}, incorporating the additional coordinate and replacing \cite[Proposition 2.5]{BJMP22} by Lemma \ref{lem:core_theorem_single_irreducible_to_general}, one can confirm that it suffices to establish the conclusion when $(\bar\mu,\nu)$ is such that $(\proj_1 \bar \mu,\nu)$ is irreducible.
    As the argument runs almost verbatim to the proof of \cite[Lemma 5.2]{BJMP22}, we omit the details and assume from now on that $(\proj_1 \bar \mu,\nu)$ is irreducible.
    
    Let us suppose that $d_\U$ denotes some bounded complete metric compatible with the topology on $\U$ and check that we may suppose w.l.o.g.\ that $\X = \R = \Y$, ${\bar f}(x,u) = 1+|x| + d_\U(u,u_0)$ for some $u_0\in\U$ and $g(y) = 1+|y|$. The convergence of $(\bar\mu^k)_k$ (resp. $(\nu^k)_k$) to $\bar\mu$ in ${\cal P}_{\bar f}(\X\times \U)$ (resp. $\nu$ in ${\cal P}_g(\Y)$) implies that $(\bar\mu^k(\cdot\cap\X\times\U))_k$ (resp. $(\nu^k(\cdot\cap\Y))_k$) converges to $\bar\mu (\cdot\cap\X\times\U)$ in ${\cal P}_{1}(\R\times \U)$ (resp. $\nu(\cdot\cap\Y)$ in ${\cal P}_1(\R)$). We set $\tilde \pi(\cdot)=\pi(\cdot\cap \X\times\U\times\Y)\in \Pi_M(\bar\mu(\cdot\cap\X\times\U),\nu(\cdot\cap\Y))$. Let $(\tilde\pi^k)_k$ be a sequence such that $\tilde\pi^k\in\Pi(\bar\mu^k(\cdot\cap\X\times\U),\nu^k(\cdot\cap\Y))$ and $(\tilde J(\tilde\pi^k)=(x,u,\tilde\pi^k_{x,u})_{\#}\tilde\pi^k)_k$ converges to $\tilde J(\tilde\pi)=(x,u,\tilde\pi_{x,u})_{\#}\tilde\pi$ in $\mathcal P_1(\R \times \U \times \mathcal P_1(\R))$. Since $\X$, $\U$ and $\Y$ are Polish, the Borel sigma-fields satisfy
\begin{align*}
  {\cal B}(\X\times\U\times\Y)&={\cal B}(\X)\otimes {\cal B}(\U)\otimes {\cal B}(\Y)=\sigma\left(\{\{A\cap\X\}\times B\times \{C\cap\Y\}:\;A,C\in{\cal B}(\R),B\in{\cal B}(\U)\}\right)\\&=\left\{D\cap \X\times\U\times\Y:D\in{\cal B}(\R)\otimes{\cal B}(\U)\otimes{\cal B}(\R)\right\}=\left\{D\cap \X\times\U\times\Y:D\in{\cal B}(\R\times\U\times\R)\right\}.
\end{align*}
By Alexandrov's theorem, $\X$ and $\Y$ are countable intersections of open subsets of $\R$. Hence $\X,\Y \in{\cal B}(\R)$ and $\X\times \U\times\Y\in{\cal B}(\R\times \U\times \R)$ so that ${\cal B}(\X)\subset{\cal B}(\R)$, ${\cal B}(\X)\subset{\cal B}(\R)$ and ${\cal B}(\X\times\U\times\Y)\subset{\cal B}(\R\times \U\times \R)$. Let $\pi^k\in \Pi(\bar\mu^k,\nu^k)$ be defined by $\pi^k=\tilde\pi^k|_{{\cal B}(\X\times \U\times\Y)}$. By \cite[Lemma A.7]{BeJoMaPa21b}, the sequence $(J(\pi^k))_k$ is relatively compact in ${\cal P}_{\bar f\oplus \hat g}(\X\times \U\times{\cal P}_{ g}(\Y))$. Let $(J(\pi^{k_j}))_{j}$ denote some subsequence converging to $Q$. Since the injection $i:\X\times\U\times{\cal P}(\Y)\ni(x,u,\rho)\mapsto (x,u,\rho(\cdot\cap\Y))\in\R\times\U\times{\cal P}(\R)
$ is continuous, $i_\#J(\pi^k)=\tilde J(\tilde\pi^k)$ and $i_\#J(\pi)=\tilde J(\tilde\pi)$, we have for any continuous and bounded function $\varphi$ on $\R\times\U\times{\cal P}(\R)$,
\begin{align*}
   Q(\varphi\circ i)=\lim_{j\to\infty}J(\pi^{k_j})(\varphi\circ i)=\lim_{j\to\infty}\tilde J(\tilde \pi^{k_j})(\varphi)=\tilde J(\tilde \pi)(\varphi)=J(\pi)(\varphi\circ i).
\end{align*}
The equality between the left-most and right-most terms remains valid when $\varphi$ is measurable and bounded. Let $A\in{\cal B}(\X)$, $B\in{\cal B}(\U)$ and $C\in{\cal B}({\cal P}(\Y))$. Since ${\cal P}(\R)\ni\rho\mapsto\rho(\Y)$ is measurable, ${\cal P}_{\Y}(\R)=\{\eta\in{\cal P}(\R):\eta(\Y)=1\}$ is a Borel subset of ${\cal P}(\R)$. Since ${\cal P}(\Y)\ni\rho\mapsto \rho(\cdot\cap\Y)\in{\cal P}_{\Y}(\R)$ is an homeomorphism with inverse $r:{\cal P}_{\Y}(\R)\ni\rho\mapsto \rho|_{\cal B(\Y)}\in{\cal P}(\Y)$ when ${\cal P}_{\Y}(\R)$ is endowed with the topology induced by ${\cal P}(\R)$, $r^{-1}(C)\in{\cal B}({\cal P}_{\Y}(\R))\subset{\cal B}({\cal P}(\R))$. For the choice $\varphi=\mathbbm 1_{A\times B\times r^{-1}(C)}$, we deduce that $Q(A\times B\times C)=J(\pi)(A\times B\times C)$. Since ${\cal B}(\X\times\U\times{\cal P}(\Y))={\cal B}(\X)\otimes{\cal B}(\U)\otimes{\cal B}({\cal P}(\Y))$, this implies that $Q=J(\pi)$. Therefore the sequence $(J(\pi^k))_{k}$ converges to $J(\pi)$ in ${\cal P}_{\bar f\oplus \hat g}(\X\times \U\times{\cal P}_{ g}(\Y))$.

    Therefore, we assume from now on that $\X = \R = \Y$, ${\bar f}(x,u) = 1+|x| + d_\U(u,u_0)$ and $g(y) = 1+|y|$.
    
    Moreover, by using Lemma \ref{lem:core_theorem_finite_sum} we may assume that $\pi$ admits the representation \eqref{eq:bar_pi_finite_sum}.
    Let $(\U_j)_{j = 1}^J$ be the associated finite measurable partition of $\U$.
    Without loss of generality, e.g.\ by replacing one element of the partition $\U_k$ such that $\bar \mu(\R \times \U_k) > 0$ with the union of $\U_k$ with all elements $\U_j$ that satisfy $\bar \mu(\R \times \U_j) = 0$ and removing the latter, we can assume that $\min_{1 \le j \le J} \bar \mu(\R \times \U_j) > 0$.
    For $j \in \{1,\ldots,J\}$ and $k \in \N$, we define
	\[
		\bar \mu_j := \mathbbm 1_{\R \times \U_j} \bar \mu,\quad \bar \mu_j^k := \mathbbm 1_{\R \times \U_j} \bar \mu^k, \quad \mu_j := 
                \proj_{1}  \bar \mu_j  \quad\text{and}\quad \mu_j^k := 
                \proj_{1}  \bar \mu_j^k.
	\]
	As $(\U_j)_{j = 1}^J$ is comprised of continuity sets for the first marginal of $\bar\mu$, the weak convergence of $(\bar\mu^k)_{k\in\N}$ to $\bar\mu$ implies that $(\bar\mu^k_j)_{k\in\N}$ converges weakly to $\bar\mu_j$ and, due to the continuity of the first coordinate mapping, $(\mu^k_j)_{k\in\N}$ converges weakly to $\mu_j$ for each $j\in\{1,\cdots,J\}$. 
    All the requirements of Proposition \ref{prop:approximation_finite_pairs} are satisfied, allowing us to identify, for each $j \in \{1,\ldots, J\}$, a sequence of subprobability measures $(\nu^k_j)_{k \in \N}$ such that
	\[
	\nu^k_j\underset{k\to+\infty}{\longrightarrow}\nu_j, \quad \mu_j^k \leq_c \nu^k_j\quad\text{and}\quad\sum_{j = 1}^J \nu^k_j = \nu^k.
	\]
	From now on we will assume that $k$ is large enough so that $\min_{1\le j\le J}\mu^k_j(\R) > 0$.
    Weak convergence of the original sequences yields, for each $j\in\{1,\cdots,J\}$, that
	the normalized sequence $(\bar\mu^k_j/\mu^k_j(\R))_{k \in \N}$ (resp. $(\nu^k_j/\mu^k_j(\R))_{k \in \N}$) converges weakly to $\bar\mu_j/\mu_j(\R)$ (resp. $\nu_j/\mu_j(\R)$) as $k\to\infty$.
	As $(\bar\mu^k)_{k \in \N}$ and $(\nu^k)_{k \in \N}$ are $\W_1$-convergent sequences, it then follows easily from Lemmas \ref{lem:convergence_subprobabilities} and \ref{lem:convergence_dominated} that the normalized sequences converge in $\W_1$. 
	Thus, we can apply \cite[Theorem 2.6]{BJMP22} and obtain an $\AW_1$-convergent sequence $(\gamma^k_j)_{k \in \N}$ of martingale couplings with limit $\gamma_j$ where
    \[ 
    \gamma_j := \frac{\mu_j \times K_{j,x}}{\mu_j(\R)}\in \Pi_M\left(\frac{\mu_j}{\mu_j(\R)},\frac{\nu_j}{\mu_j(\R)}\right)
        \quad\text{and}\quad
            \gamma^k_j \in \Pi_M\bigg(\frac{\mu^k_j}{\mu^k_j(\R)},\frac{\nu^k_j}{\mu^k_j(\R)}\bigg).
    \]
	Further, write
	\[
		\bar \gamma^k_j := \frac{\bar\mu^k_j\times \gamma^k_{j,x}}{\mu^k_j(\R)}
		\quad\mbox{and}\quad
		\bar \gamma_j := \frac{ \mathbbm 1_{\R \times \U_j \times \R} \pi}{\mu_j(\R)}=\frac{\bar\mu_j\times K_{j,x}}{\mu_j(\R)}.
	\]
	To prove that $(\bar \gamma^k_j)_{k \in \N}$ converges in $\AW_1$ to $\bar \gamma_j$, we choose
    \[ 
        \chi^k_j \in \Pi \left( \frac{\bar\mu^k_j}{\mu^k_j(\R)}, \frac{\bar \mu_j}{\mu_j(\R)} \right)
        \quad\text{resp.}\quad
         \hat \chi^k_j\in \Pi\left(\frac{\mu^k_j}{\mu^k_j(\R)},\frac{\mu_j}{\mu_j(\R)}\right)
    \]
    $\W_1$-optimal between its marginals resp.\ optimal for $\AW_1(\gamma^k_j,\gamma_j)$.
	For ease of notation, we moreover define $\check \chi^k_j$ as $\check \chi^k_j(dx,du,d\hat x,d\hat u, dz) := \chi^k_j(dx,du,d\hat x,d\hat u) \hat \chi^k_{j,x}(dz)$ and compute
	\begin{align}
		\AW_1(\bar \gamma^k_j, \bar \gamma_j) &\le
		\W_1\bigg(\frac{\bar \mu^k_j}{\mu^k_j(\R)},\frac{\bar \mu_j}{\mu_j(\R)}\bigg) + \int \W_1(\gamma^k_{j,x}, K_{j,\hat x}) \, \chi^k_j(dx,du,d\hat x,d\hat u)
		\notag\\
		&\le \W_1\bigg(\frac{\bar \mu^k_j}{\mu^k_j(\R)},\frac{\bar \mu_j}{\mu_j(\R)}\bigg) + \int \left(\W_1(\gamma^k_{j,x}, K_{j,z}) + \W_1(K_{j,z}, K_{j,\hat x}) \right)\, \check \chi^k_j(dx,du,d\hat x,d\hat u,d z)
		\notag\\
		&\le
		\W_1\bigg(\frac{\bar \mu^k_j}{\mu^k_j(\R)},\frac{\bar \mu_j}{\mu_j(\R)}\bigg)+ \AW_1(\gamma^k_j,\gamma_j) + \int \W_1(K_{j,z}, K_{j,\hat x}) \, \check \chi^k_j(dx,du,d\hat x, d\hat u, dz),
           \label{eqawgamkjgamj}
	\end{align}
        By Lemma \ref{lem:eder} due to Eder \cite{Ed19} and since \[ \int |z-\hat x| \check \chi^k_j(dx,du,d\hat x,d\hat u,d z)\le \W_1\bigg(\frac{\bar \mu^k_j}{\mu^k_j(\R)},\frac{\bar \mu_j}{\mu_j(\R)}\bigg)+ \AW_1(\gamma^k_j,\gamma_j)\underset{k\to\infty}{\longrightarrow}0, \]
        the right-hand side of \eqref{eqawgamkjgamj} goes to $0$ as $k\to\infty$.

    In the next step, we revert the normalization by setting
    \[
    	\pi^k := \sum_{j=1}^J\mu^k_j(\R)\bar\gamma^k_j\in\bar\Pi_M(\bar\mu^k,\nu^k).
    \]
    Let $\varepsilon\in (0,\min_{1\le j\le J}\mu_j(\R))$ and assume that $k$ is sufficiently large so that $\max_{1\le j\le J}|\mu^k_j(\R)-\mu_j(\R)|\le\varepsilon$.
    We split each of $\bar \pi^k$ and $\bar \pi$ into two parts:
    \[
    	\pi^k=\sum_{j=1}^J\left(\mu_j(\R)-\varepsilon\right)\bar\gamma^k_j
    	+\sum_{j=1}^J \left(\mu^k_j(\R)-\mu_j(\R)+\varepsilon\right)\bar\gamma^k_j
    	\mbox{ and }
    	\pi=\sum_{j=1}^J \left(\mu_j(\R)-\varepsilon\right)\bar\gamma_j+\varepsilon\sum_{j=1}^J\bar\gamma_j.
    \]
    Because $(\bar \mu_j)_{j = 1}^J$ are pairwise singular, we can apply \cite[Lemma 3.7]{BJMP22} to deduce that
    \[
    	\lim_{k \to \infty} \AW_1 \left( \sum_{j=1}^J\left(\mu_j(\R)-\varepsilon\right)\bar\gamma^k_j, \sum_{j=1}^J \left(\mu_j(\R)-\varepsilon\right)\bar\gamma_j \right) = 0.
    \]
    With the help of \cite[Lemma 3.6]{BJMP22} and \cite[Lemma 3.1 (a)(c)]{BJMP22}, we conclude that
    \begin{equation}
    	\label{eq:thm_core_theorem.convergence}
    	\limsup_{k \to \infty} 
    	\AW_1 \left( \pi^k, \pi \right) \le C \left( I^1_{J\epsilon}(\bar \mu) + I^1_{J\epsilon}(\nu) \right),
    \end{equation}
    where $C > 0$ is a constant that does not depend on $(k,\epsilon)$ and, for a Polish space $\X$, $\delta > 0$, $x_0 \in \X$, and $\eta \in \mathcal M_1(\X)$, $I^1_\delta(\eta)$ is given by
    \[
    	I^1_\delta(\eta) := \sup_{\tau \in \mathcal M_1(\X), \tau \le \eta,\tau(\X)\le\delta}\int_\X d_\X (x,x_0) \, \tau(dx).
    \]
    As $\epsilon > 0$ was arbitrary and we have by \cite[Lemma 3.1 (b)]{BJMP22} that $\lim_{\epsilon\searrow 0}\left( I^1_{J\epsilon}(\bar \mu) + I^1_{J\epsilon}(\nu) \right)=0$, we can infer from \eqref{eq:thm_core_theorem.convergence} that $(\bar \pi^k)_{k \in \N}$ converges to $\bar \pi$ in $\AW_1$.
\end{proof}
\subsection{Proofs of Corollary \ref{cor:intro.approximation.process} and Propositions \ref{prop:stability.VIX} and \ref{prop:intro.stability.process}
} \label{ssec:examples}
We are first going to prove the following stronger variants of Corollary \ref{cor:intro.approximation.process} and Proposition \ref{prop:intro.stability.process} before deducing Proposition \ref{prop:stability.VIX}. 
Let $f\colon \X \to [1,+\infty)$ be a continuous growth function such that 
\[ \liminf_{ \substack{ |x| \to \infty \\ x \in \X} } \frac{f(x)}{|x|} > 0. \]
The topological space $\mathcal P_f(\X)$ is defined like $\mathcal P_g(\Y)$ with $\X$ and $f$ replacing $\Y$ and $g$. The topological space $\mathcal P_{f \oplus \hat g}(\X \times \mathcal P_g(\Y))$ is defined analogously to $\mathcal P_{\bar f \oplus \hat g}(\X \times\U\times \mathcal P_g(\Y))$ but without the $u$ coordinate.

\begin{corollary}
      \label{cor:approximation.process}
      Let $(\mu^k,\nu^k)_{k \in \N}$, $\mu^k \le_{cx} \nu^k$, be a convergent sequence in $\mathcal P_f(\X)\times\mathcal P_g(\Y)$ with limit $(\mu,\nu)$.
      Then, every $P\in\Lambda_M(\mu,\nu)$ is the limit in $\mathcal P_{f \oplus \hat g}(\X \times \mathcal P_g(\Y))$ of a sequence $(P^k)_{k \in \N}$ with $P^k\in\Lambda_M(\mu^k,\nu^k)$.
\end{corollary}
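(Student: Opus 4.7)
The idea is to encode elements of $\Lambda_M(\mu,\nu)$ as genuine martingale couplings in the framework of Theorem \ref{thm:approximation}, using the kernel-valued coordinate itself as the extra-information parameter. Concretely, I set $\U := \mathcal P_g(\Y)$ (Polish, endowed with a bounded compatible metric) and $\bar f \colon \X \times \U \to [1,+\infty)$, $\bar f(x,\rho) := f(x) + \hat g(\rho)$, which satisfies the growth condition \eqref{eq:growth.function} since $f$ does and $\hat g \ge 1$. Given $P \in \Lambda_M(\mu,\nu)$, I identify it with $\bar\mu := P \in \mathcal P_{\bar f}(\X \times \U)$ (indeed $\bar\mu(\bar f) = \mu(f) + \nu(g) < \infty$), and define
\[
    \pi(dx, d\rho, dy) := \bar\mu(dx, d\rho) \, \rho(dy).
\]
The martingale constraint $\mean(\rho) = x$ that is built into $\Lambda_M(\mu,\nu)$ is precisely what makes $\pi \in \Pi_M(\bar\mu,\nu)$, and one has the key identity $\pi_{x,\rho} = \rho$.

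The bulk of the argument is to produce $\bar\mu^k \in \mathcal P_{\bar f}(\X \times \U)$ with $\proj_1 \bar\mu^k = \mu^k$ and $\bar\mu^k \to \bar\mu$ in $\mathcal P_{\bar f}(\X \times \U)$. Since \eqref{eq:growth.function} forces $f$ to dominate $|\cdot|$, convergence in $\mathcal P_f(\X)$ implies $\mathcal W_1$-convergence. Pick an optimal coupling $\chi^k \in \Pi(\mu,\mu^k)$ for $\mathcal W_1$, disintegrate $P(dx,d\rho) = \mu(dx) P_x(d\rho)$, and set
\[
    \bar\mu^k := (x',\rho)_\# \bigl( \chi^k(dx,dx') \, P_x(d\rho) \bigr).
\]
Diagonal concentration of $\chi^k$ (via Skorokhod representation, together with dominated convergence for fixed $\rho \in \mathcal P_g(\Y)$) gives weak convergence $\bar\mu^k \to \bar\mu$, while the bookkeeping $\bar\mu^k(\bar f) = \mu^k(f) + \nu(g) \to \mu(f) + \nu(g) = \bar\mu(\bar f)$ upgrades this to $\mathcal P_{\bar f}$-convergence. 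As $\proj_1 \bar\mu^k = \mu^k \le_{cx} \nu^k$, Theorem \ref{thm:approximation} applied to $\pi \in \Pi_M(\bar\mu,\nu)$ then supplies $\pi^k \in \Pi_M(\bar\mu^k,\nu^k)$ with $J(\pi^k) \to J(\pi)$ in $\mathcal P_{\bar f \oplus \hat g}(\X \times \U \times \mathcal P_g(\Y))$.

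Finally, define $P^k := (x,\sigma)_\# J(\pi^k)$; concretely $P^k = (x',\pi^k_{x',\rho})_\# \bar\mu^k$. The martingale property of $\pi^k$ reads $\mean(\pi^k_{x',\rho}) = x'$ $\bar\mu^k$-a.s., which yields $\mean(\sigma) = x'$ $P^k$-a.s., and an immediate check of the first and second marginals gives $P^k \in \Lambda_M(\mu^k,\nu^k)$. Since $\pi_{x,\rho} = \rho$, the very same projection applied to $J(\pi)$ returns $P$. Continuity of $(x,u,\sigma) \mapsto (x,\sigma)$ from $\mathcal P_{\bar f \oplus \hat g}(\X \times \U \times \mathcal P_g(\Y))$ to $\mathcal P_{f \oplus \hat g}(\X \times \mathcal P_g(\Y))$ (the dominating function $f(x)+\hat g(\sigma)$ is bounded by $\bar f(x,u)+\hat g(\sigma)$) then delivers $P^k \to P$ in the required topology.

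\textbf{Main obstacle.} The principal difficulty is the construction of $\bar\mu^k$ in the second paragraph: one must simultaneously enforce $\proj_1 \bar\mu^k = \mu^k$, compatibility with the convex-order constraint, and convergence $\bar\mu^k \to \bar\mu$ in the refined $\bar f$-topology, even though $\U = \mathcal P_g(\Y)$ is an infinite-dimensional parameter space. Once the coupling $\chi^k$ has been put to this use, everything else is a functorial consequence of Theorem \ref{thm:approximation} and the $J$-lift.
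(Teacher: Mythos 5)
Your argument is correct but follows a genuinely different route from the paper's. The paper sets $\U=[0,1]$ and uses Kallenberg's disintegration lemma to produce a measurable kernel $(x,u)\mapsto\pi_{x,u}$ with $(\pi_{x,u})_\#\Leb=P_x$; then $\bar\mu:=\mu\otimes\Leb$, $\bar\mu^k:=\mu^k\otimes\Leb$, and the convergence $\bar\mu^k\to\bar\mu$ in $\mathcal P_{\bar f}(\X\times[0,1])$ with $\bar f(x,u)=f(x)$ is immediate, so Theorem~\ref{thm:approximation} applies at once and $P^k:=\proj_{1,3}J(\pi^k)$ finishes. Your encoding $\U=\mathcal P_g(\Y)$, $\pi_{x,\rho}=\rho$, $\bar\mu=P$ is appealing in that it dispenses with the measurable selection entirely --- the identity $\proj_{1,3}J(\pi)=P$ is tautological --- but the price, as you rightly flag, is that you must manufacture a sequence $\bar\mu^k$ with $\proj_1\bar\mu^k=\mu^k$ and $\bar\mu^k\to P$ in $\mathcal P_{\bar f}$ by hand. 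That is exactly the work the paper's $[0,1]$-parametrisation makes free, which is why its proof is shorter.

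One step in your construction needs repair: you first fix a $\mathcal W_1$-optimal $\chi^k\in\Pi(\mu,\mu^k)$ and then appeal to a Skorokhod representation, but the Skorokhod coupling of $\mu$ and $\mu^k$ has no reason to coincide with the optimal one, so the two devices clash as written. Either (a) forgo optimality and take $\chi^k:=\law(X,X^k)$ directly from a Skorokhod representation with $X^k\to X$ a.s.\ (so $\bar\mu^k=\law(X^k,R)$ with $R\,|\,X\sim P_X$, and weak convergence $\bar\mu^k\to P$ is immediate by bounded convergence), or (b) keep the optimal $\chi^k$ and argue via tightness: the measures $\Theta^k:=\chi^k(dx,dx')P_x(d\rho)$ satisfy $\proj_{1,3}\Theta^k=P$ and $\int|x-x'|\,\Theta^k(dx,dx',d\rho)\to 0$, so every weak accumulation point of $(\Theta^k)_k$ is concentrated on $\{x=x'\}$ and therefore equals $((x,\rho)\mapsto(x,x,\rho))_\#P$, giving $\bar\mu^k=\proj_{2,3}\Theta^k\to P$ weakly. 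In either case your identity $\bar\mu^k(\bar f)=\mu^k(f)+\nu(g)\to\mu(f)+\nu(g)=\bar\mu(\bar f)$ upgrades this to convergence in $\mathcal P_{\bar f}(\X\times\U)$. The final projection step is fine: $f\oplus\hat g$ and the dropped summand $\hat g(\rho)$ are both nonnegative lower semicontinuous, so Portmanteau combined with the convergence of $J(\pi^k)(\bar f\oplus\hat g)$ forces each summand to converge separately (the same splitting device as in Lemma~\ref{lem:convergence_dominated}), which yields $P^k\to P$ in $\mathcal P_{f\oplus\hat g}(\X\times\mathcal P_g(\Y))$.
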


\begin{proof}
    Let $P=\mu(dx)P_x(d\rho)\in\Lambda_M(\mu,\nu)$. By Lemma 3.22 \cite{kallenberg2ndedition}, there exists a measurable mapping $\R\times[0,1]\ni(x,u)\mapsto\pi_{x,u}\in{\cal P}_1(\R)$ such that ${\pi_{x,u}}_{\#}\Leb(du)=P_x$. 
    Then for $\bar\mu = \mu \otimes \Leb$, $\pi(dx,du,dy)=\bar\mu(dx,du)\pi_{x,u}(dy)\in\Pi_M(\bar\mu,\nu)$.
    The sequence $(\bar\mu^k = \mu^k \otimes \Leb)_{k\in\N}$ converges to $\bar\mu$ in $\mathcal P_{\bar f}(\X\times[0,1])$ where $\bar f(x,u)=f(x)$. By Theorem \ref{thm:approximation}, $\pi$ is the limit in the adapted weak topology of a sequence $(\pi^k)_{k \in \N}$ with $\pi^k \in \Pi_M(\bar \mu^k,\nu^k)$. 
    Therefore, we have $P^k =J(\pi^k) \in\Lambda_M(\mu^k,\nu^k)$ and get that $(P^k)_{k\in\N}$ converges to $P$ in $\mathcal P_{f \oplus \hat g}(\X \times \mathcal P_g(\Y))$.
\end{proof}

\begin{proposition} \label{thm:wmot.stabilitys}
    Assume that $C \colon \X \times \mathcal P_g(\Y) \to \R$ is continuous and that there is a constant $K > 0$ such that, for all $(x,\rho) \in \X \times \mathcal P_g(\Y)$,
    \[
        |C(x,\rho)| \le K \left( 1 + f(x) + \rho(g) \right).
    \]
    Then, the value $\hat V_C$ is attained and continuous on $\{ (\mu,\nu) \in \mathcal P_f(\X)\times\mathcal P_g(\Y) : \mu \le_{cx} \nu \}$. Moreover, if $(\mu^k,\nu^k)_{k\in\N}$ with $\mu^k\le_{cx}\nu^k$  is convergent in $\mathcal P_f(\X)\times\mathcal P_g(\Y)$ and $P^k \in \Lambda_M(\mu^k,\nu^k)$ is a sequence of optimizers of \eqref{eq:WMOT}, then so are its accumulation points.
    \end{proposition}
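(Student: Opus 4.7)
The plan is to combine three ingredients: (i) continuity of the linear functional $P \mapsto P(C)$ on $\mathcal P_{f \oplus \hat g}(\X \times \mathcal P_g(\Y))$; (ii) compactness/extraction in the adapted topology, obtained by lifting $P \in \Lambda_M(\mu,\nu)$ to a coupling $\pi \in \Pi_M(\bar\mu,\nu)$ with $\bar\mu := \mu \otimes \Leb$ and $\bar f(x,u):=f(x)$; and (iii) the approximation Corollary \ref{cor:approximation.process}.

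First I verify (i). The bound $|C(x,\rho)| \le K(1 + f(x) + \rho(g)) = K(1 + (f \oplus \hat g)(x,\rho))$ combined with continuity of $C$ gives, by a standard uniform-integrability argument, that $P^k \to P$ in $\mathcal P_{f \oplus \hat g}$ implies $P^k(C) \to P(C)$: decompose $K(1 + f \oplus \hat g) \pm C$ into non-negative continuous functions, apply Portmanteau to each of them to obtain two liminf inequalities, and combine these with the convergence $P^k(f \oplus \hat g) \to P(f \oplus \hat g)$ to conclude $P^k(C) \to P(C)$.

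For attainment, I fix $(\mu,\nu)$ with $\mu \le_{cx}\nu$ and take a maximizing sequence $(P^n)_n \subset \Lambda_M(\mu,\nu)$. Exactly as in the proof of Corollary \ref{cor:approximation.process}, I use Kallenberg's disintegration theorem to realize each $P^n$ as the $\proj_{1,3}$-marginal of $J(\pi^n)$ with $\pi^n \in \Pi_M(\bar\mu,\nu)$. Since the marginals $\bar\mu$ and $\nu$ are fixed, the quantity $\pi^n(\bar f \oplus g) = \bar\mu(\bar f) + \nu(g)$ is constant, so $(\pi^n)_n$ is tight and relatively compact in $\mathcal P_{\bar f \oplus g}$. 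Extracting a subsequence such that $J(\pi^{n_j})$ converges in $\mathcal P_{\bar f \oplus \hat g}$ to $J(\pi)$ for some $\pi \in \Pi_M(\bar\mu,\nu)$ (using continuity of $J$ established in Section \ref{ssec:topology} together with tightness on $\X\times\U\times\mathcal P_g(\Y)$), and projecting out the $\U$-coordinate, yields $P^{n_j} \to P^\star \in \Lambda_M(\mu,\nu)$ in $\mathcal P_{f \oplus \hat g}$; step (i) gives $P^\star(C) = \hat V_C(\mu,\nu)$.

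For continuity of $\hat V_C$, let $(\mu^k,\nu^k) \to (\mu,\nu)$ in $\mathcal P_f(\X) \times \mathcal P_g(\Y)$ with $\mu^k \le_{cx} \nu^k$. The lower bound $\liminf_k \hat V_C(\mu^k,\nu^k) \ge \hat V_C(\mu,\nu)$ is direct: I take an optimizer $P^\star \in \Lambda_M(\mu,\nu)$ provided by the attainment step, apply Corollary \ref{cor:approximation.process} to get $P^k \in \Lambda_M(\mu^k,\nu^k)$ with $P^k \to P^\star$ in $\mathcal P_{f \oplus \hat g}$, and conclude with (i). The upper bound $\limsup_k \hat V_C(\mu^k,\nu^k) \le \hat V_C(\mu,\nu)$ is obtained by taking optimizers $P^k$, lifting to $\pi^k \in \Pi_M(\bar\mu^k,\nu^k)$, and repeating the tightness/extraction argument of the previous paragraph—now with the converging marginals $\bar\mu^k \to \bar\mu$ in $\mathcal P_{\bar f}$ and $\nu^k \to \nu$ in $\mathcal P_g$ supplying convergence $\pi^{k_j}(\bar f \oplus g) \to \pi(\bar f \oplus g)$—to obtain an accumulation point $P \in \Lambda_M(\mu,\nu)$ with $\lim_j \hat V_C(\mu^{k_j},\nu^{k_j}) = P(C) \le \hat V_C(\mu,\nu)$. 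The final assertion that accumulation points of optimizers are themselves optimizers is an immediate by-product: any such accumulation point $P$ satisfies $P(C) = \lim_j \hat V_C(\mu^{k_j},\nu^{k_j}) = \hat V_C(\mu,\nu)$ by the continuity just proved.

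The main obstacle is the extraction in the adapted refinement $\mathcal P_{\bar f \oplus \hat g}$, rather than in the plain weak topology: beyond marginal tightness one needs the moment convergence $\pi^{k_j}(\bar f \oplus g) \to \pi(\bar f \oplus g)$ and a tightness argument on the $\mathcal P_g(\Y)$-component of $J(\pi^{k_j})$. Both are however automatic once one works on $\Pi_M(\bar\mu^k,\nu^k)$ with converging fixed marginals, so that $\pi^{k_j}(\bar f \oplus g)=\bar\mu^{k_j}(\bar f)+\nu^{k_j}(g)\to\bar\mu(\bar f)+\nu(g)$, and the continuity property of $J$ recalled at the end of the notation section transfers the convergence from $\mathcal P_{\bar f \oplus g}$ to $\mathcal P_{\bar f \oplus \hat g}$ along a suitable subsequence.
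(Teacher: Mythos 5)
Your overall strategy matches the paper's: continuity of $P\mapsto P(C)$ on $\mathcal P_{f\oplus\hat g}(\X\times\mathcal P_g(\Y))$, compactness of $\Lambda_M$ for attainment, Corollary~\ref{cor:approximation.process} for the $\liminf$-inequality, and a compactness/extraction argument for the $\limsup$-inequality, with stability of optimizers falling out as a by-product. The paper realizes all of this more directly by working in $\Lambda_M(\mu,\nu)\subset\mathcal P_{f\oplus\hat g}(\X\times\mathcal P_g(\Y))$ itself and citing \cite[Lemma A.7]{BeJoMaPa21b} for compactness; your detour through $\Pi_M(\bar\mu^k,\nu^k)$ via the Kallenberg lifting is harmless but unnecessary for the extraction step.

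There is, however, one step whose justification would not hold as written. You invoke ``continuity of $J$ established in Section~\ref{ssec:topology}'' to pass from the relative compactness of $(\pi^{n_j})_j$ in $\mathcal P_{\bar f\oplus g}$ to convergence of $(J(\pi^{n_j}))_j$ in $\mathcal P_{\bar f\oplus\hat g}$, but $J$ is \emph{not} continuous for these topologies; what the notation section records is the reverse implication, namely that convergence of $J(\pi^k)$ forces convergence of $\pi^k$. The compactness you need is genuinely an additional fact: the image $\{J(\pi):\pi\in\Pi_M(\bar\mu^k,\nu^k),\,k\in\N\}$ is relatively compact in $\mathcal P_{\bar f\oplus\hat g}(\X\times\U\times\mathcal P_g(\Y))$, which is exactly the content of \cite[Lemma A.7]{BeJoMaPa21b} (tightness of the $\mathcal P_g(\Y)$-marginal of $J(\pi^k)$ plus uniform integrability of $\bar f\oplus\hat g$, inherited from the converging marginals). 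A smaller point in the same step: after extracting $J(\pi^{n_j})\to Q$ you assert the limit has the form $J(\pi)$ for some $\pi\in\Pi_M(\bar\mu,\nu)$, but in general $Q$ is only guaranteed to have the correct marginals and to satisfy $\mean(\rho)=x$ $Q$-a.e.; it need not be of the form $J(\pi)$. Fortunately this does not matter, since all you use is that $\proj_{\X\times\mathcal P_g(\Y)}Q\in\Lambda_M(\mu,\nu)$, which holds because those constraints are closed. With the compactness justification replaced by the Lemma~A.7 argument (or an equivalent Prokhorov-type argument in the lifted space), your proof is correct.
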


      \begin{proof}
  Note that the mapping $\mathcal P_{f \oplus \hat g}(\X \times \mathcal P_g(\Y))\ni P\mapsto P(C)$ is continuous.   
  Using \cite[Lemma A.7]{BeJoMaPa21b} for the relative compactness, we easily check that for $(\mu,\nu)\in \mathcal P_f(\X)\times\mathcal P_g(\Y)$ with $\mu\le_{cx}\nu$, $\Lambda_M(\mu,\nu)$ is compact and non-empty. Therefore $\hat V_C$ is attained.
  
  Let $(\mu^k,\nu^k)_{k\in\N}$ with $\mu^k\le_{cx}\nu^k$ be convergent in $\mathcal P_f(\X)\times\mathcal P_g(\Y)$ with limit $(\mu,\nu)$. 
  Let $P^\star \in\Lambda_M(\mu,\nu)$ be optimal for $\hat V_C(\mu,\nu)$. By Corollary \ref{cor:approximation.process}, there exists a sequence $(P^k)_{k \in \N}$ with $P^k\in\Lambda_M(\mu^k,\nu^k)$ that converges to $P^\star$ in $\mathcal P_{f \oplus \hat g}(\X \times \mathcal P_g(\Y))$. Therefore \begin{align}
   \hat V_C(\mu,\nu)=P^\star(C)=\lim_{k\to \infty}P^k(C)\le \liminf_{k\to\infty}\hat V_C(\mu^k,\nu^k).\label{eq:uscvhc}
\end{align}
Now choosing $P^{k,\star}\in\Lambda_M(\mu^k,\nu^k)$ such that $\hat V_C(\mu^k,\nu^k)=P^{k,\star}(C)$, we may extract a subsequence $(P^{k_j,\star})_{j\in\N}$ converging to some $P^\infty$ in $\mathcal P_{f \oplus \hat g}(\X \times \mathcal P_g(\Y))$ and such that $\lim_{j\to\infty}P^{k_j,\star}(C)=\limsup_{k\to\infty}\hat V_C(\mu^k,\nu^k)$. Then $P^\infty\in\Lambda_M(\mu,\nu)$ and
\[ \hat V_C(\mu,\nu)\ge P^\infty(C)=\lim_{j\to\infty}P^{k_j}(C)=\limsup_{k\to\infty}\hat V_C(\mu^k,\nu^k). \]
With \eqref{eq:uscvhc}, we deduce that $\hat V_C(\mu,\nu)=\lim_{k\to\infty}\hat V_C(\mu^k,\nu^k)$ so that $\hat V_C$ is continuous on $\{ (\eta, \theta) : \eta \le_{cx} \theta \} \subseteq \mathcal P_f(\X) \times \mathcal P_g(\Y)$. Moreover $\hat V_C(\mu,\nu)= P^\infty(C)$ and this equality remains true for any accumulation point $P^\infty$ of $(P^{k,\star})_{k\in\N}$.
\end{proof}
\begin{proof}[Proof of Proposition \ref{prop:stability.VIX}]
    Set $\X=(0,+\infty)$, $f:\X\ni x\mapsto |\ln(x)| + |x|$ and $C_{\rm VIX}:\X\times{\cal P}_f(\X)\ni(x,\rho)\mapsto -\sqrt{\rho(\ell_x)\vee 0}$ where we recall that $\ell_x=\frac{2}{T_2 - T_1}\ln(x/y)$ for $(x,y)\in\X^2$. Let $\mu,\nu\in{\cal P}_f(\X)$. For $\pi \in \Pi_{\rm VIX}(\mu,\nu)$, we have that $(x,\pi_{x,u})_\# \pi = P \in \Lambda_M(\mu,\nu)$ with 
\begin{equation}
    \label{eq:from.VIX.to.WMOT}
    \int \sqrt{\pi_{x,u}(\ell_x)} \, \proj_{1,2}\pi(dx,du) = -P(C_{\rm VIX}).
\end{equation}
Therefore $D_{\rm sub}(\mu,\nu)\ge -\hat V_{C_{\rm VIX}}(\mu,\nu)$.

On the other hand, if $P \in \Lambda_M(\mu,\nu)$ then $\pi := (x,\sqrt{\rho(\ell_x)},y)_\# P(dx,d\rho) \rho(dy) \in \Pi_{\rm VIX}(\mu,\nu)$ since $\rho(\ell_x)\in[0,+\infty)$ $P(dx,d\rho)$ a.e. by Jensen's inequality combined with $\int \rho(f)P(dx,d\rho)=\nu(f)<\infty$ and, for $\varphi:\X\times \R_+\to\R$ measurable and bounded,
\begin{align*}
    \int_{\X\times\R_+\times\X}\varphi(x,u)(u^2-\ell_x(y))\pi(dx,du,dy)&=\int_{{\cal P}_{f\oplus \hat f(\X\times{\cal P}_f(\X))}}\varphi(x,\sqrt{\rho(\ell_x)})\int_\X(\rho(\ell_x)-\ell_x(y))\rho(dy)P(dx,d\rho)\\&=0
.\end{align*}Therefore \eqref{eq:from.VIX.to.WMOT} again holds.
We conclude that $D_{\rm sub}(\mu,\nu) = -\hat V_{C_{\rm VIX}}(\mu,\nu)$ and deduce from Proposition \ref{thm:wmot.stabilitys} applied with $\Y = \X = (0,\infty)$ and $g=f$ the continuity of $D_{\rm sub}$.
\end{proof}

\subsection{Stability of extended weak martingale optimal transport problems: proof of Theorem \ref{thm:intro.stability}}
\label{ssec:stability}

This section is dedicated to the proof of a stronger variant of Theorem \ref{thm:intro.stability}, that is Theorem \ref{thm:stability} below.

\begin{assumpB} \label{ass:cost.B}
    We say that a cost function $C \colon \X \times \U \times \mathcal P_g(\Y) \to \R$ satisfies Assumption \ref{ass:cost.B} if $C$ is continuous and there is a constant $K > 0$ such that, for all $(x,u,\rho) \in \X \times \U \times \mathcal P_g(\Y)$,
    \begin{equation}
        \label{eq:assB.cost.bound}
        |C(x,u,\rho)| \le K \left( 1 + {\bar f}(x,u) + \rho(g) \right).
    \end{equation}
\end{assumpB}

\begin{theorem} \label{thm:stability}
    Let $C$ satisfy Assumption \ref{ass:cost.B} and $C(x,u,\cdot)$ be convex. 
    Then the value function $V_C$ is attained and continuous on $\{ (\bar \mu,\nu) : \proj_1\bar\mu\le_{cx}\nu\} \subseteq \mathcal P_{\bar f}(\X \times \U) \times \mathcal P_g(\Y)$.
    
    Furthermore, when $(\bar\mu^k,\nu^k)_{k \in \N}$, $\proj_1 \bar \mu^k \le_{cx} \nu^k$, converges to $(\bar \mu,\nu)$ in $\mathcal P_{\bar f}(\X \times \U) \times \mathcal P_g(\Y)$, we have:
    \begin{enumerate}[label = (\roman*)]
        \item \label{it:stability.1} if $(\pi^k)_{k \in \N}$ is a sequence of optimizers $\pi^k \in \Pi_M(\bar \mu^k,\nu^k)$ of \eqref{eq:pwmot}, so are its accumulation points;
        \item \label{it:stability.2} if $C(x,u,\cdot)$ is strictly convex, then optimizers of \eqref{eq:WMOT} are unique and $(\pi^k)_{k \in \N}$ converges to the optimizer of \eqref{eq:pwmot} with marginals $(\bar \mu,\nu)$ in the adapted weak topology.
    \end{enumerate}
\end{theorem}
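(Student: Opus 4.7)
The plan is to combine Theorem~\ref{thm:approximation} with a Jensen-type lower semicontinuity for the map $\pi \mapsto J(\pi)(C) = \int C(x,u,\pi_{x,u})\,\bar\mu(dx,du)$, which will follow from convexity of $C(x,u,\cdot)$. A key functional observation is that $Q \mapsto Q(C)$ is continuous on $\mathcal P_{\bar f \oplus \hat g}(\X \times \U \times \mathcal P_g(\Y))$ by continuity of $C$ and the growth bound \eqref{eq:assB.cost.bound}. Moreover, since $J(\pi)(\bar f \oplus \hat g) = \bar\mu^k(\bar f) + \nu^k(g)$ is uniformly bounded as the marginals converge, \cite[Lemma A.7]{BeJoMaPa21b} ensures that $\{J(\pi^k) : \pi^k \in \Pi_M(\bar\mu^k,\nu^k),\,k \in \N\}$ is relatively compact in $\mathcal P_{\bar f \oplus \hat g}$.

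The central step will be the following LSC lemma: if $\pi^k \in \Pi_M(\bar\mu^k,\nu^k)$ converges weakly to $\pi \in \Pi_M(\bar\mu,\nu)$, then $J(\pi)(C) \le \liminf_k J(\pi^k)(C)$. To prove it, I would extract a subsequence with $J(\pi^{k_j}) \to Q$ in $\mathcal P_{\bar f \oplus \hat g}$ and pass the identity $\pi^{k_j} = \int (\delta_{(x,u)} \otimes \rho)\,J(\pi^{k_j})(dx,du,d\rho)$ to the weak limit, which works because $(x,u,\rho) \mapsto \int \varphi\,d\rho$ is bounded continuous for any bounded continuous $\varphi$. This yields $\int \rho\,Q_{x,u}(d\rho) = \pi_{x,u}$ for $\bar\mu$-a.e.\ $(x,u)$, and Jensen's inequality then delivers $J(\pi)(C) \le Q(C) = \lim_j J(\pi^{k_j})(C)$. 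Applied with fixed marginals along a minimizing sequence, this lemma yields attainment; the upper bound $\limsup_k V_C(\bar\mu^k,\nu^k) \le V_C(\bar\mu,\nu)$ follows by approximating an optimizer of the limit problem via Theorem~\ref{thm:approximation} and invoking continuity of $Q \mapsto Q(C)$; the matching lower bound, together with part (i), follows by applying the LSC lemma along any weak accumulation point of optimizers $\pi^{k,\star}$.

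For part (ii), uniqueness under strict convexity is routine: distinct optimizers $\pi^\star_1 \ne \pi^\star_2$ contradict strict Jensen integrated against $\bar\mu$ on the positive-measure set where their disintegrations disagree. For the adapted weak convergence, any $\mathcal P_{\bar f \oplus \hat g}$-accumulation point $Q$ of $(J(\pi^k))$ has barycenter $\pi^\star$ (by the barycenter identity from the LSC step combined with uniqueness from part (i)) and satisfies $Q(C) = V_C(\bar\mu,\nu) = J(\pi^\star)(C)$ by continuity. The equality case in Jensen under strict convexity then forces $Q_{x,u}$ to be Dirac at $\pi^\star_{x,u}$ for $\bar\mu$-a.e.\ $(x,u)$, i.e.\ $Q = J(\pi^\star)$; since every accumulation point coincides with $J(\pi^\star)$, the full sequence converges in the adapted weak topology. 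The main obstacle will be this barycenter identification in the LSC lemma, namely showing that the weak limit of $(\pi^{k_j})$ coincides with $\int (\delta_{(x,u)} \otimes \rho)\,Q$ through the $\mathcal P_{\bar f \oplus \hat g}$ continuity; once this is secured, everything else assembles cleanly from Theorem~\ref{thm:approximation} and Jensen's inequality, with strict convexity providing the decisive equality case for (ii).
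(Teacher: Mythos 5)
Your proposal is correct and follows essentially the same route as the paper: the $\limsup$ bound via Theorem~\ref{thm:approximation} and continuity of $Q\mapsto Q(C)$, the $\liminf$ bound via Jensen-type lower semicontinuity on weakly convergent optimizers, compactness via \cite[Lemma A.7]{BeJoMaPa21b}, and the equality case of Jensen under strict convexity to pin down $Q=J(\pi^\star)$ for part (ii). The only cosmetic difference is that you prove the lower-semicontinuity step directly through the barycenter identification $\pi=I(Q)$, whereas the paper cites \cite[Proposition A.12 (b)]{BeJoMaPa21b} as a black box; the underlying argument is the same.
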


\begin{proof}
  By \cite[Proposition A.12 (b)]{BeJoMaPa21b}, the map $\mathcal P_{{\bar f}\oplus g}(\X\times\U\times\Y)\ni \pi \mapsto \int C(x,u,\pi_{x,u}) \, \proj_{1,2} \pi(dx,du)$ is lower semicontinuous. Since $\Pi_M(\bar\mu,\nu)$ is a compact subset of $\mathcal P_{{\bar f}\oplus g}(\X\times\U\times\Y)$, we deduce that the value function is attained.

  Let $((\bar\mu^k,\nu^k))_{k\in\N}$ in $\{ (\bar \eta, \theta) : \proj_1 \bar \eta \le_{cx} \theta \} \subseteq \mathcal P_{\bar f}(\X \times \U) \times \mathcal P_g(\Y)$ be convergent with limit $(\bar \mu,\nu)$. Let $\pi^\star\in \Pi^M(\bar \mu,\nu)$ be such that $V_C(\bar\mu,\nu)=J(\pi^\star)(C)$. By Theorem \ref{thm:approximation}, there exists a sequence $(\pi^k)_{k \in \N}$ with $\pi^k \in \Pi_M(\bar \mu^k,\nu^k)$ such that $J(\pi^k) \to J(\pi^\star)$ in $\mathcal P_{{\bar f} \oplus \hat g}(\X \times \U \times \mathcal P_g(\Y))$.
 Since the mapping $\mathcal P_{{\bar f} \oplus \hat g}(\X \times \U\times \mathcal P_g(\Y))\ni P\mapsto P(C)$ is continuous, we deduce that
   \begin{equation}
      \limsup_{k \to \infty} V_C(\bar \mu^k,\nu^k) \le \lim_{k \to \infty} J(\pi^k)(C) = J(\pi^\star)(C)=V_C(\mu,\nu).\label{eq:uscvc}
   \end{equation}
 Choosing now $\pi^{k,\star}\in \Pi_M(\bar\mu^k,\nu^k)$ such that $V_C(\bar\mu^k,\nu^k)=\int C(x,u,\pi^{k,\star}_{x,u}) \pi^{k,\star}(dx,du)$, we may extract a subsequence $(\pi^{k_j,\star})_{j\in\N}$ such that $(\pi^{k_j,\star})_{j\in\N}$ converges to $\pi^\infty$ in $\mathcal P_{{\bar f}\oplus g}(\X\times\U\times\Y)$ and $\lim_{j\to\infty}\int C(x,u,\pi^{k_j,\star}_{x,u}) \bar\mu^{k_j}(dx,du)=\liminf_{k\to\infty}V_C(\mu^k,\nu^k)$. The limit $\pi^\infty$ belongs to $\Pi_M(\bar\mu,\nu)$ and, by lower semicontinuity of $\mathcal P_{{\bar f}\oplus g}(\X\times\U\times\Y)\ni \pi \mapsto \int C(x,u,\pi_{x,u}) \, \proj_{1,2} \pi(dx,du)$, we deduce that
  $$V_C(\bar\mu,\nu)\le \int C(x,u,\pi^\infty_{x,u}) \bar\mu(dx,du)\le \lim_{j\to\infty}\int C(x,u,\pi^{k_j,\star}_{x,u}) \bar\mu^{k_j}(dx,du)
  =\liminf_{k\to\infty}V_C(\mu^k,\nu^k).$$
With \eqref{eq:uscvc}, we deduce that $\lim_{k\to\infty}V_C(\bar\mu^k,\nu^k)=V_C(\mu,\nu)$ so that $V_C$ is continuous on $\{ (\bar \eta, \theta) : \proj_1 \bar \eta \le_{cx} \theta \} \subseteq \mathcal P_{\bar f}(\X \times \U) \times \mathcal P_g(\Y)$. Moreover, $V_C(\bar\mu,\nu)= \int C(x,u,\pi^\infty_{x,u}) \bar\mu(dx,du)$ and this equality remains true for any accumulation point $\pi^\infty$ of $(\pi^{k,\star})_{k\in\N}$ in $\mathcal P_{{\bar f}\oplus g}(\X\times\U\times\Y)$.


    To show \ref{it:stability.2}, we assume the opposite, that is, that $(\pi^k)_{k \in \N}$ admits a subsequence which does not have $\pi^\star$ as an accumulation point w.r.t.\ the adapted weak topology.
    By \cite[Lemma A.7]{BeJoMaPa21b}, this particular subsequence admits a subsequence $(\pi^{k_j})_{j \in \N}$ such that $(J(\pi^{k_j}))_{j \in \N}$ converges in $\mathcal P_{{\bar f} \oplus \hat g}(\X \times \U \times \mathcal P_g(\Y))$ to $P$.
    We define $\tilde \pi \in \Pi_M(\bar\mu,\nu)$ by $\tilde \pi = \bar \mu \times \tilde \pi_{x,u}$ with $\tilde \pi_{x,u} = \int \rho(dy) \, P_{x,u}(d\rho)$.
    As $C(x,u,\cdot)$ is convex and continuous, we have by Jensen's inequality
    \[
        \int C(x,u,\tilde \pi_{x,u}) \, \bar\mu(dx,du) \le \int C(x,u,\rho) \, P(dx,du,d\rho) = \lim_{k \to \infty} \int C(x,u,\pi^k_{x,u}) \, \bar\mu^k(dx,du) = V_C(\bar\mu,\nu).
    \]
    
    In particular, $\tilde \pi$ is an optimizer of $V_C(\bar\mu,\nu)$ and, by strict convexity of $C(x,u,\cdot)$, we have $J(\tilde \pi) = P$ and uniqueness of optimizers.
    Thus, $\tilde \pi = \pi^\star$, and we also get $J(\pi^\star) = P$.
    Hence, $(\pi^{k_j})_{j \in \N}$ converges in the adapted weak topology to $\pi^\star$, which is a contradiction and completes the proof.
\end{proof}

\subsection{Stability of the shadow couplings: proof of Proposition \ref{prop:shadcoupl}} \label{ssec:SC}
Let us first state a consequence of Proposition \ref{prop:shadcoupl} concerning the shadow couplings.

In view of Sklar's theorem, it is natural to parametrize the dependence structure between $\mu$ and the Lebesgue measure on $[0,1]$ in the lift $\bar \mu \in \Pi(\mu,\Leb)$ of $\mu$ by copulas i.e. probability measures on $[0,1]\times[0,1]$ with both marginals equal to the Lebesgue measure. We call shadow coupling between $\mu$ and $\nu$ with copula $\chi$ the shadow coupling between $\mu$ and $\nu$ with source equal to the image $\bar\mu_\chi$ of $\chi$ by $[0,1]\times[0,1]\ni(v,u)\mapsto (F_\mu^{-1}(v),u)\in\R\times [0,1]$, where $F_\mu^{-1}$ denotes the quantile function of $\mu$. \begin{corollary}\label{cor:contshc}
     The shadow coupling with copula $\chi$ is continuous on the domain $\{ (\mu,\nu) : \mu \le_{cx} \nu\} \subseteq \mathcal P_p(\R ) \times \mathcal P_p(\R)$ and with range $(\mathcal P_p(\R \times \R),\W_p)$ and even continuous in $\AW_p$ at each couple $(\mu,\nu)$ such that $\mu$ does not weight points.
 \end{corollary}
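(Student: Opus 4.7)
The plan is to deduce both assertions from Proposition \ref{prop:shadcoupl} via the identity $\tilde\pi^\star=\proj_{1,3}\SC(\bar\mu_\chi,\nu)$.

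First, I would establish continuity of the map $\mu\mapsto\bar\mu_\chi$ from $\mathcal{P}_p(\R)$ to $\mathcal{P}_p(\R\times[0,1])$. By the Fr\'echet--Hoeffding identity $\mathcal{W}_p^p(\mu^k,\mu)=\int_0^1|F_{\mu^k}^{-1}(v)-F_\mu^{-1}(v)|^p\,dv$, the image of $\chi$ under the map $(v,u)\mapsto\bigl((F_{\mu^k}^{-1}(v),u),(F_\mu^{-1}(v),u)\bigr)$ is an admissible element of $\Pi(\bar\mu^k_\chi,\bar\mu_\chi)$ and yields $\mathcal{W}_p(\bar\mu^k_\chi,\bar\mu_\chi)\le\mathcal{W}_p(\mu^k,\mu)$. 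Since $\proj_2\bar\mu^k_\chi=\Leb$ and $\proj_1\bar\mu^k_\chi=\mu^k\le_{cx}\nu^k$, Proposition \ref{prop:shadcoupl} applies and gives $\SC(\bar\mu^k_\chi,\nu^k)\to\SC(\bar\mu_\chi,\nu)$ in $(\mathcal{P}_p(\R\times[0,1]\times\R),\mathcal{W}_p)$. Since $\proj_{1,3}$ is $1$-Lipschitz for $\mathcal{W}_p$, the first continuity claim follows.

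For the $\mathcal{AW}_p$-continuity at $(\mu,\nu)$ with atomless $\mu$, I would combine the stronger conclusion $J\circ\SC(\bar\mu^k_\chi,\nu^k)\to J\circ\SC(\bar\mu_\chi,\nu)$ in $\mathcal{P}_p(\R\times[0,1]\times\mathcal{P}_p(\R))$ furnished by Proposition \ref{prop:shadcoupl} with the quantile coupling $\gamma^k:=(F_{\mu^k}^{-1},F_\mu^{-1})_\#\Leb\in\Pi(\mu^k,\mu)$. Taking $\gamma^k$ as a candidate in the infimum defining $\mathcal{AW}_p$ between the shadow couplings $\tilde\pi^{\star,k}$ and $\tilde\pi^\star$ yields
\[
    \mathcal{AW}_p^p(\tilde\pi^{\star,k},\tilde\pi^\star)\le\int_0^1\!\Bigl(|F_{\mu^k}^{-1}(v)-F_\mu^{-1}(v)|^p+\mathcal{W}_p^p\bigl(\tilde\pi^{\star,k}_{F_{\mu^k}^{-1}(v)},\tilde\pi^\star_{F_\mu^{-1}(v)}\bigr)\Bigr)\,dv.
\]
The first summand vanishes by $\mathcal{W}_p$-convergence of the quantile functions, so it remains to control the conditional laws. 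Using the representation \eqref{eq:SC.extremal} and the fact that, for atomless $\mu$, the disintegration $\bar\mu_{\chi,F_\mu^{-1}(v)}$ coincides with the conditional $\chi_v$ of $\chi$ given its first coordinate for Lebesgue-a.e.\ $v$, this reduces to controlling $\bar\mu^k_{\chi,F_{\mu^k}^{-1}(v)}$ along the sequence.

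The main obstacle is precisely this reduction. Even when $\mu$ is atomless, the approximating $\mu^k$ typically is not (e.g.\ for empirical measures), so $\bar\mu^k_{\chi,F_{\mu^k}^{-1}(v)}$ equals the average of $\chi_{v'}$ over the (possibly non-trivial) interval of constancy of $F_{\mu^k}^{-1}$ containing $v$. Proving that these averages converge to $\chi_v$ for Lebesgue-a.e.\ $v$ requires a Lebesgue-differentiation-type argument for the $\mathcal{P}_p([0,1])$-valued map $v\mapsto\chi_v$; the atomlessness of $\mu$ enters exactly here by ensuring that the lengths of the relevant intervals of constancy vanish as $k\to\infty$ for Lebesgue-a.e.\ $v$. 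Combining this with the $\mathcal{AW}_p$-type convergence of the kernels $(x,u)\mapsto\bar\pi^k_{x,u}$ already furnished by Proposition \ref{prop:shadcoupl} closes the argument.
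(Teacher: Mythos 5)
Your treatment of the $\W_p$-continuity matches the paper: the bound $\W_p(\bar\mu^k_\chi,\bar\mu_\chi)\le\W_p(\mu^k,\mu)$ via the comonotone lift of $\chi$, combined with $\W_p(\proj_{1,3}\pi,\proj_{1,3}\pi')\le\AW_p(\pi,\pi')$, is exactly the argument used. No issues there.

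For the $\AW_p$-continuity the proposal has a genuine gap: you correctly identify the obstacle (the disintegration $\tilde\pi^{\star,k}_{F_{\mu^k}^{-1}(v)}$ averages $\SC^k_{F_{\mu^k}^{-1}(v),u}$ against $\chi_{v'}$ over the whole interval of constancy of $F_{\mu^k}^{-1}$ around $v$) but you do not resolve it. Two problems in particular. First, the assertion that atomlessness of $\mu$ "ensures that the lengths of the relevant intervals of constancy vanish as $k\to\infty$ for Lebesgue-a.e.\ $v$" is not elementary; this is precisely the statement $\vartheta^k(v,w)\to v$ $dv\,dw$-a.e., which the paper imports from the proof of \cite[Proposition 4.2]{JoMa22} and which requires the continuity of $F_\mu$. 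Second, even granting this, what is needed is not differentiability of $v\mapsto\chi_v$ alone but control of the \emph{joint} average $\int\SC_{F_\mu^{-1}(\vartheta^k(v,w)),u}\chi_{\vartheta^k(v,w)}(du)\,dw$ together with the error coming from replacing $\SC^k$ by $\SC$. Your sentence that combining with "the $\AW_p$-type convergence of the kernels\dots closes the argument" conceals a real step: one must split by the triangle inequality, use Jensen's inequality to bring the Wasserstein distance inside the $dw$-integral, and appeal twice to Lemma~\ref{lem:eder} (Eder's lemma), once to replace $\SC^k_{F_{\mu^k}^{-1}(v),u}$ with $\SC_{F_\mu^{-1}(v),u}$ using the $\AW_p$-optimal coupling, and once to pass from $\vartheta^k(v,w)$ to $v$ in the inner averaging. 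A Lebesgue-differentiation argument is neither what the paper does nor sufficient on its own, since the intervals are not nested and the integrand itself ($\SC^k$) is changing with $k$. You would need to introduce the rebalancing map $\vartheta^k$, write $\pi^k_{F_{\mu^k}^{-1}(v)}=\int\SC^k_{F_{\mu^k}^{-1}(\vartheta^k(v,w)),u}\chi_{\vartheta^k(v,w)}(du)\,dw$, and carry out the two-term decomposition explicitly.
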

 For the Hoeffding-Fréchet copula, $\chi(dv,du)=\Leb(du)\delta_u(dv)$, we recover the stability w.r.t. the marginals $\mu$ and $\nu$ of the left-curtain coupling proved by Juillet in \cite{Ju16}. For the independence copula $\chi(dv,du)=\Leb(dv)\otimes\Leb(du)$, we deduce the continuity of the sunset coupling. 

The proof that the selector $SC$ of the lifted shadow coupling is continuous when the codomain $\mathcal P_p(\R \times [0,1] \times \R)$ is endowed with the adapted Wasserstein distance $\AW_p$ relies on the fact that, by \eqref{eq:SC.extremal}, the selector $\SC$ takes values in the following extremal set of extended martingale couplings
\[  
    \Pi_{M,p}^{\rm ext} := \{ \pi \in \mathcal P_p(\R \times\U \times \R ) : \# \supp(\pi_{x,u}) \in \{1,2\} \text{ and } \mean(\pi_{x,u}) = x \text{ $\pi$-a.s.}\}.
\]
The set $\Pi_{M,p}^{\rm ext}$ is extremal in the following sense:
when $\pi \in \Pi_{M,p}^{\rm ext}$ and $P \in \mathcal P_p(\R \times \U \times \mathcal P_p(\R))$ with $I(P) = \pi$, where $I(P)$ is the unique measure that satisfies
\[
    \int f(x,u,y) \, I(P)(dx,du,dy) = \int \int f(x,u,y) \, \rho(dy) \, P(dx,du,d\rho),
\]
for all $f \in C_b(\R \times \U \times \R)$ and $\mean(\rho) = x$ $P$-a.s., then we already have $P = J(\pi)$.
Proceeding from this observation, the next lemma shows that on $\Pi_{M,p}^{\rm ext}$ the $p$-Wasserstein topology coincides with the $p$-adapted Wasserstein topology, which we in turn use to prove Proposition \ref{prop:shadcoupl}.

\begin{lemma}\label{lem:ext.WAW}
    The identity map $\id$ on $\mathcal P_p(\R \times \U \times \R)$ is $(\mathcal W_p,\mathcal{AW}_p)$-continuous at any $P\in \Pi^{\rm ext}_{M,p}$.
    In particular, the metric spaces $(\Pi_{M,p}^{\rm ext}, \mathcal W_p)$ and $(\Pi_{M,p}^{\rm ext}, \mathcal{AW}_p)$ are topologically equivalent.
\end{lemma}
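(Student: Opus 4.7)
The inequality $\mathcal{W}_p\le\mathcal{AW}_p$ always holds, so only the reverse direction at an extremal point needs to be established. Concretely, I fix $\pi\in\Pi^{\rm ext}_{M,p}$ and a sequence $(\pi^k)_{k\in\N}\subset\Pi^{\rm ext}_{M,p}$ with $\pi^k\to\pi$ in $\mathcal{W}_p$ on $\mathcal{P}_p(\R\times\U\times\R)$, and aim to show $\mathcal{AW}_p(\pi^k,\pi)=\mathcal{W}_p(J(\pi^k),J(\pi))\to 0$.

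\textbf{Step 1 (relative compactness).} I first argue that $(J(\pi^k))_{k\in\N}$ is relatively compact in $\mathcal{P}_p(\R\times\U\times\mathcal{P}_p(\R))$ equipped with $\mathcal{W}_p$. Its first two marginals equal $\proj_{1,2}\pi^k$, which is $\mathcal{W}_p$-convergent by hypothesis. For the third marginal, the identity
\[
    \int \rho(|y|^p)\,J(\pi^k)(dx,du,d\rho) \;=\; \int |y|^p\,\pi^k(dx,du,dy)
\]
together with $\mathcal{W}_p$-convergence of $\pi^k$ gives a uniform $p$-moment bound, which by the arguments behind \cite[Lemma A.7]{BeJoMaPa21b} yields tightness in $\mathcal{P}_p(\R)$ and convergence of the $p$-th moment. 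This delivers relative compactness of $(J(\pi^k))_{k\in\N}$ in the $p$-Wasserstein topology.

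\textbf{Step 2 (identifying the limit).} Let $P$ be any subsequential $\mathcal{W}_p$-limit of $(J(\pi^k))_{k\in\N}$. The barycenter map $I:\mathcal{P}_p(\R\times\U\times\mathcal{P}_p(\R))\to\mathcal{P}_p(\R\times\U\times\R)$ is $\mathcal{W}_p$-continuous (integration against continuous functions with $p$-growth passes to the limit), and $I(J(\pi^k))=\pi^k\to\pi$, so $I(P)=\pi$. Moreover, since each $\pi^k\in\Pi^{\rm ext}_{M,p}$ satisfies $\mean(\pi^k_{x,u})=x$ a.s., the measure $J(\pi^k)$ is supported on the set $\{(x,u,\rho):\mean(\rho)=x\}$, which is $\mathcal{W}_p$-closed in $\R\times\U\times\mathcal{P}_p(\R)$ because $\rho\mapsto\mean(\rho)$ is continuous on $\mathcal{P}_p(\R)$. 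Closedness preserved under weak limits gives $\mean(\rho)=x$ $P$-a.s.

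\textbf{Step 3 (extremality and conclusion).} With $I(P)=\pi$ and $\mean(\rho)=x$ $P$-a.s., the extremality property of $\Pi^{\rm ext}_{M,p}$ recalled immediately before the lemma forces $P=J(\pi)$. Indeed, $I(P)=\pi$ implies that for $P$-a.e.\ $(x,u,\rho)$, $\rho$ is supported on $\supp(\pi_{x,u})\subseteq\{T_1(x,u),T_2(x,u)\}$; combined with $\mean(\rho)=x$ and the fact that there is a unique probability measure on a prescribed one- or two-point set with prescribed barycenter, one obtains $\rho=\pi_{x,u}$, hence $P_{x,u}=\delta_{\pi_{x,u}}$ and $P=J(\pi)$. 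Since every accumulation point of $(J(\pi^k))_{k\in\N}$ equals $J(\pi)$ and the sequence is relatively compact in $\mathcal{W}_p$, the whole sequence converges: $J(\pi^k)\to J(\pi)$ in $\mathcal{W}_p$, which is precisely $\mathcal{AW}_p(\pi^k,\pi)\to 0$. The topological equivalence on $\Pi^{\rm ext}_{M,p}$ then follows immediately.

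The core of the argument is the extremality input in Step 3, and this is exactly the property already recorded before the lemma; the remaining work is the standard compactness/identification machinery of Steps 1--2. The main technical care needed is in Step 1, ensuring that moment convergence for $\pi^k$ transfers into $p$-moment convergence for $J(\pi^k)$ on the lifted space, so that weak subsequential limits are automatically $\mathcal{W}_p$-limits.
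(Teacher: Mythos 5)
Your proof is correct and follows essentially the same route as the paper: relative compactness of $(J(\pi^k))_{k\in\N}$, identification of any subsequential limit $P$ via continuity of $I$ and closedness of the martingale constraint, and the extremality of $\Pi^{\rm ext}_{M,p}$ to force $P=J(\pi)$. You spell out the extremality step (uniqueness of a measure with prescribed barycenter on a one- or two-point set) a bit more explicitly than the paper does, which is a helpful clarification.

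One remark worth keeping in mind: you take the approximating sequence inside $\Pi^{\rm ext}_{M,p}$, whereas the paper's proof assumes only $\mean(\pi^k_{x,u})=x$ $\pi^k$-a.s.; both are strictly stronger than what the first sentence of the lemma literally asserts (continuity of $\id$ on all of $\mathcal P_p(\R\times\U\times\R)$ at $\pi$), and that literal claim is in fact false without some martingale restriction on the sequence — e.g.\ take $\pi$ with $\proj_{1,2}\pi=\delta_{(0,u_0)}$ and $\pi_{0,u_0}=\tfrac12\delta_1+\tfrac12\delta_{-1}$, and $\pi^k$ putting mass $\tfrac12$ at $(1/k,u_0,1)$ and $\tfrac12$ at $(-1/k,u_0,-1)$; then $\pi^k\to\pi$ in $\W_p$ but $\AW_p(\pi^k,\pi)\not\to 0$. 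Since your assumption matches the paper's (your argument only uses $\mean(\pi^k_{x,u})=x$, not full extremality of $\pi^k$) and suffices for the ``in particular'' conclusion and for the application to $\SC$, this is not a gap in your proof, just a discrepancy with the lemma's wording that exists already in the source.
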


\begin{proof}
    We follow a similar line of reasoning as used in \cite[Lemma 7]{Pa22}.
    As $\W_p \le \AW_p$, it suffices to show that, given a sequence $(\pi^k)_{k \in \N}$ in $\mathcal P_p(\R \times \U \times \R)$ with $\mean(\pi^k_{x,u}) = x$ $\pi^k$-a.s.\ and $\pi \in \Pi_{M,p}^{\rm ext}$,
    \[
        \lim_{k \to \infty} \W_p(\pi^k,\pi) = 0 \implies
        \lim_{k \to \infty} \AW_p(\pi^k,\pi) = 0.
    \]
    So, let $(\pi^k)_{k \in \N}$ and $\pi$ be as above and assume that $\pi^k \to \pi$ in $\mathcal W_p$.
    Observe that $J|_{\Pi_{M,p}^{\rm ext}}$ is bijective onto
    \begin{equation}
        \label{eq:PiM.extremal}
        J(\Pi_{M,p}^{\rm ext}) = \{ P \in \mathcal P_p(\R\times\U\times\mathcal P_p(\R)) : I(P)  \in \Pi_{M,p}^{\rm ext} \text{ and }\mean(\rho) = x \text{ $P(dx,du,d\rho)$-a.e.} \},  
    \end{equation}
    with inverse $I$. Using \cite[Lemma 2.3]{BaBePa18}, we find that the sequence $(J(\pi^k))_{k \in \N}$ is $\mathcal W_p$-relatively compact in $\mathcal P_p(\R\times\U\times \mathcal P_p(\R))$.
    Therefore, there is a subsequence $(\pi^{k_j})_{j \in \N}$ such that $J(\pi^{k_j}) \to P$.
    Since $\pi^{k_j} \to I(P) = \pi \in \Pi^{\rm ext}_M$ and $\mean(\rho) = x$ $P(dx,du,d\rho)$-a.e., we get by \eqref{eq:PiM.extremal} that $P \in J(\Pi_{M,p}^{\rm ext})$ which yields by bijectivity of $J|_{\Pi_{M,p}^{\rm ext}}$ that $P = J(\pi)$.
    Hence, $J(\pi^{k_j}) \to J(\pi)$ in $\W_p$ which means that $\pi^{k_j} \to \pi$ in $\AW_p$.

    Since any subsequence of $(\pi^k)_{k \in \N}$ admits by above reasoning an $\AW_p$-convergent subsequence with limit $\pi$, we conclude that $\pi^k \to \pi$ in $\AW_p$.
\end{proof}
The proof of Proposition \ref{prop:shadcoupl} also relies on the following two lemmas, the proof of which are postponed to the end of the current section.
\begin{lemma}\label{lem:MZ}
   Let $\cal V,\cal Z$ be Polish spaces, $(\theta^k)_{k \in \N}$ be a sequence in $\mathcal P({\cal V})$ that converges in total variation to $\theta$, and let $\varphi^k\colon{\cal V}\to {\cal Z}\;k\in\N$, and $\varphi\colon{\cal V}\to {\cal Z}$ be measurable functions.
   Then 
   \[ (\id, \varphi^k)_\# \theta^k \to (\id,\varphi)_\# \theta \text{ in }\mathcal{P}({\cal V} \times {\cal Z}) \implies \varphi^k \to \varphi \text{ in }\theta\text{-probability}. \]
 \end{lemma}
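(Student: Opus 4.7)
The plan is to reduce the problem to the case $\theta^k=\theta$ and then exploit weak convergence together with Lusin's theorem. First, since pushforward is non-expansive for the total variation norm,
\[
\|(\id,\varphi^k)_\#\theta^k - (\id,\varphi^k)_\#\theta\|_{\TV} \le \|\theta^k-\theta\|_{\TV} \underset{k\to\infty}{\longrightarrow} 0,
\]
and TV convergence implies weak convergence, so the hypothesis combined with the triangle inequality yields $(\id,\varphi^k)_\#\theta\to(\id,\varphi)_\#\theta$ weakly in $\mathcal P(\mathcal V\times \mathcal Z)$. Hence it is enough to prove the statement when $\theta^k=\theta$ for every $k$.

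Fix $\varepsilon,\delta>0$. Since $\theta$ is a Borel probability measure on the Polish space $\mathcal V$ and $\varphi\colon\mathcal V\to\mathcal Z$ is Borel measurable with values in the Polish (hence second countable) space $\mathcal Z$, Lusin's theorem provides a closed set $K\subset\mathcal V$ such that $\theta(\mathcal V\setminus K)<\delta$ and $\varphi|_K$ is continuous. I then consider
\[
C := \{(v,z)\in\mathcal V\times\mathcal Z : v\in K,\; d_{\mathcal Z}(z,\varphi(v))\ge \varepsilon\}.
\]
The set $C$ is closed: if $(v_n,z_n)\in C$ converges to $(v,z)$, the closedness of $K$ gives $v\in K$, then continuity of $\varphi|_K$ forces $\varphi(v_n)\to\varphi(v)$, and finally continuity of $d_{\mathcal Z}$ yields $d_{\mathcal Z}(z,\varphi(v))\ge\varepsilon$.

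Applying the Portmanteau theorem to the closed set $C$, and noting that $(\id,\varphi)_\#\theta(C)=\theta(\{v\in K : 0\ge\varepsilon\})=0$, I obtain
\[
\limsup_{k\to\infty}\theta\bigl(\{v\in K : d_{\mathcal Z}(\varphi^k(v),\varphi(v))\ge\varepsilon\}\bigr)
=\limsup_{k\to\infty}(\id,\varphi^k)_\#\theta(C)=0.
\]
Consequently,
\[
\limsup_{k\to\infty}\theta\bigl(\{v : d_{\mathcal Z}(\varphi^k(v),\varphi(v))\ge\varepsilon\}\bigr)\le \theta(\mathcal V\setminus K)<\delta.
\]
As $\delta>0$ was arbitrary, the limit is $0$, which is exactly convergence of $\varphi^k$ to $\varphi$ in $\theta$-probability. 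The only genuinely subtle point is handling the lack of continuity of $\varphi$: Lusin's theorem circumvents this by letting us localize on a closed set where $\varphi$ behaves continuously, so that a natural distance-type set becomes closed and Portmanteau applies.
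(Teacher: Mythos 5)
Your proof is correct, and it takes a genuinely different route from the paper's. Both arguments begin with the same reduction: the pushforward is a contraction for the total variation norm, so $(\id,\varphi^k)_\#\theta^k-(\id,\varphi^k)_\#\theta\to 0$ in $\TV$, and hence the hypothesis implies $(\id,\varphi^k)_\#\theta\to(\id,\varphi)_\#\theta$ weakly; one may therefore assume $\theta^k=\theta$. After that the two diverge. The paper replaces the metrics by bounded equivalent ones so that weak convergence becomes $\W_1$ convergence, picks $\W_1$-optimal couplings $\chi^k\in\Pi(\theta,\theta)$ for $(\id,\varphi^k)_\#\theta$ and $(\id,\varphi)_\#\theta$, and splits $\int d_{\cal Z}(\varphi^k(v),\varphi(v))\,\theta(dv)$ by the triangle inequality into the $\W_1$ distance plus a term $\int d_{\cal Z}(\varphi(v),\varphi(\hat v))\,\chi^k(dv,d\hat v)$, which is handled by Eder's lemma (Lemma~\ref{lem:eder}) because $\int d_{\cal V}(v,\hat v)\,\chi^k\to0$; convergence in $L^1(\theta)$ then gives convergence in probability. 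You instead invoke Lusin's theorem to produce a closed set $K$ with small complement on which $\varphi$ is continuous, observe that $C=\{(v,z):v\in K,\ d_{\cal Z}(z,\varphi(v))\ge\varepsilon\}$ is closed and $(\id,\varphi)_\#\theta$-null, and apply the Portmanteau theorem directly. Your argument is more self-contained: it avoids the detour through bounded metrics, optimal couplings and Eder's lemma, needing only the inner regularity of Borel probability measures on Polish spaces (which gives Lusin's theorem). The paper's route is less elementary here but keeps the proof within the coupling/Eder toolkit that recurs throughout the paper. Either works; yours is the more transparent "soft" argument.
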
\begin{lemma}\label{lem:w1bin}
   Let $x,y,z \in \R$ with $y<x<z$, and $((y^k,z^k))_{k\in\N}$ be a $(-\infty,x]\times[x,+\infty)$-valued sequence such that for each $k$, either $y^k<x<z^k$ or $y^k=x=z^k$.
   Then we have
   \begin{enumerate}[label = (\roman*)]
       \item \label{it:w1bin.1} ${\cal W}_1(B(x,y^k,z^k), B(x,y,z))\to 0\iff |y^k-y|+|z^k-z|\to 0$,
       \item \label{it:w1bin.2} ${\cal W}_1(B(x,y^k,z^k), \delta_x)\to 0\iff (z^k-x)\wedge(x-y^k)\to 0$.
   \end{enumerate}
\end{lemma}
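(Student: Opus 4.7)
\textbf{Plan for Lemma \ref{lem:w1bin}.}

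\emph{Part (ii)} is a direct calculation. If $y^k=x=z^k$, then $B(x,y^k,z^k)=\delta_x$ and both sides vanish. In the nondegenerate case $y^k<x<z^k$, since $\delta_x$ is concentrated at a single point,
\[
    {\cal W}_1(B(x,y^k,z^k),\delta_x)=\int|t-x|\,B(x,y^k,z^k)(dt)=\frac{2(x-y^k)(z^k-x)}{z^k-y^k}.
\]
Setting $a^k:=x-y^k\ge 0$ and $b^k:=z^k-x\ge 0$, the elementary two-sided bound $(a^k\wedge b^k)\le \frac{2a^kb^k}{a^k+b^k}\le 2(a^k\wedge b^k)$ gives the stated equivalence.

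\emph{Part (i), $(\Leftarrow)$:} If $y^k\to y$ and $z^k\to z$ with $y<x<z$, then eventually $y^k<x<z^k$, the barycentric weights $\tfrac{z^k-x}{z^k-y^k}$ and $\tfrac{x-y^k}{z^k-y^k}$ of $B(x,y^k,z^k)$ converge to the corresponding weights of $B(x,y,z)$, and the atoms $y^k,z^k$ lie in a bounded set. Hence $B(x,y^k,z^k)\to B(x,y,z)$ weakly with uniformly bounded supports, which upgrades to ${\cal W}_1$-convergence.

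\emph{Part (i), $(\Rightarrow)$:} Assume ${\cal W}_1(B(x,y^k,z^k),B(x,y,z))\to 0$. Since $B(x,y,z)$ puts no mass at $x$, the interval $(-\infty,x)$ is a continuity set, so weak convergence gives $B(x,y^k,z^k)((-\infty,x))\to \frac{z-x}{z-y}\in(0,1)$. The value of this quantity on the degenerate case $y^k=x=z^k$ is $0$, hence the degenerate case can occur only finitely often and eventually $w^k:=\tfrac{z^k-x}{z^k-y^k}\in[c,1-c]$ for some $c>0$. Convergence of the first absolute moment $\frac{2 a^k b^k}{a^k+b^k}=2w^k a^k=2(1-w^k)b^k$ to the strictly positive limit $\tfrac{2ab}{a+b}$, combined with $w^k,\,1-w^k$ bounded below, forces $(a^k)$ and $(b^k)$ — hence $(y^k)$ and $(z^k)$ — to be bounded. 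Extract any convergent subsequence $y^{k_j}\to y^*$, $z^{k_j}\to z^*$ with $y^*\le x\le z^*$. The degenerate limits $y^*=x$ or $z^*=x$ are excluded: either would force one weight to tend to $1$ while the corresponding atom collapses to $x$, so $B(x,y^{k_j},z^{k_j})\to \delta_x\ne B(x,y,z)$. Hence $y^*<x<z^*$, and the $(\Leftarrow)$ direction combined with uniqueness of the ${\cal W}_1$-limit yields $B(x,y^*,z^*)=B(x,y,z)$; comparing atoms gives $y^*=y$, $z^*=z$. Since every accumulation point equals $(y,z)$, the entire sequence converges.

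\emph{Main obstacle.} The delicate point is the $(\Rightarrow)$ direction of (i), where one must simultaneously exclude that $(y^k,z^k)$ escapes to infinity and that it degenerates along a subsequence to the diagonal case $y^k\!\to\! x$ or $z^k\!\to\! x$; both exclusions are accomplished by the same ingredient, namely that the limit $B(x,y,z)$ has strictly positive mass on two distinct atoms away from $x$, which via tightness and convergence of the first absolute moment pins down the behavior of the weights $w^k$ away from $\{0,1\}$.
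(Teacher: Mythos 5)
Your proof is correct. Part (ii) coincides with the paper's argument (the identity $\mathcal{W}_1(B(x,y^k,z^k),\delta_x)=\tfrac{2(x-y^k)(z^k-x)}{z^k-y^k}$ and the two-sided comparison with $(x-y^k)\wedge(z^k-x)$), but your treatment of part (i) is genuinely different. The paper writes down an explicit closed-form expression for $\mathcal{W}_1\bigl(B(x,y^k,z^k),B(x,y,z)\bigr)$ as a sum of four nonnegative terms (obtained from the optimal coupling of two mean-$x$ two-point measures) and reads both implications directly off that formula: the $\impliedby$ direction because each summand visibly vanishes, and the $\implies$ direction because each summand must vanish individually, which first pins down the weights and then the atoms. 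You instead run a soft argument: weak convergence on the continuity set $(-\infty,x)$ rules out the degenerate case and localizes the weights in $[c,1-c]$, convergence of the first absolute moment (via Kantorovich--Rubinstein, since $t\mapsto|t-x|$ is $1$-Lipschitz) gives boundedness of the atoms, and a subsequence/uniqueness-of-limit argument concludes. Both are valid; the paper's computation is more quantitative (it yields an explicit modulus relating the two quantities, at the cost of verifying the optimal coupling between binary measures), whereas your route is more robust and avoids that computation, at the cost of being purely qualitative. One small point of care in your $\implies$ step: when excluding the degenerate accumulation points $y^*=x$ or $z^*=x$, the cleanest justification is that $w^{k_j}(x-y^{k_j})=(1-w^{k_j})(z^{k_j}-x)\to\tfrac{(x-y)(z-x)}{z-y}>0$ together with $w^{k_j}\le 1$ forces $x-y^{k_j}$ (and symmetrically $z^{k_j}-x$) to stay bounded away from $0$; your phrasing via "one weight tends to $1$" also works since you have already confined $w^k$ to $[c,1-c]$, but it is worth making that dependence explicit.
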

\begin{proof}[Proof of Proposition \ref{prop:shadcoupl}]
As optimizers of $V_{\rm SC}$ are unique, we immediately obtain from Theorem \ref{thm:stability} applied with $C(x,u,\rho)=\int_\R (1-u)\sqrt{1+y^2}\rho(dy)$ continuity of 
\begin{align}
    \label{eq:SC.value.cont}
    V_{\rm SC} \colon& \{ (\bar\mu,\nu) \in \mathcal P_p(\R \times [0,1]) \times \mathcal P_p(\R) : \proj_1 \bar \mu \le_{cx} \nu, \proj_2 \bar\mu = \Leb \} \to \R, \\
    \label{eq:SC.selector.cont}
    \SC \colon& \{ (\bar\mu,\nu) \in \mathcal P_p(\R \times [0,1]) \times \mathcal P_p(\R) : \proj_1 \bar \mu \le_{cx} \nu, \proj_2 \bar\mu = \Leb \} \to \mathcal P_p(\R \times [0,1] \times \R),
\end{align}
when the domain is endowed with the product of the corresponding Wasserstein $p$-topologies.
Since $\SC$ is a continuous function taking values in $\Pi^{\rm ext}_M$, Lemma \ref{lem:ext.WAW} ensures that it is still continuous when the codomain is endowed with the stronger $\AW_p$-distance.
Therefore when 
By Proposition \ref{prop:shadcoupl} we have that
\[
    \SC(\bar\mu^k,\nu^k) \to \SC(\bar\mu,\nu) \quad \text{in }\AW_1,
\]
which is equivalent to $\W_1$-convergence of
\[
    (\id, B(X,T_1^k,T_2^k))_\# \bar\mu^k = J(\SC(\bar\mu^k,\nu^k)) \to J(\SC(\bar\mu,\nu)) = (\id, B(X,T_1,T_2))_\# \bar\mu,
\]
where $X \colon \R \times [0,1]\ni(x,u) \mapsto x\in\R$.
Applying Lemma \ref{lem:MZ} in the setting
\begin{gather*}
    {\cal V}=\R\times[0,1], \quad {\cal Z}={\cal P}_1(\R),\quad \theta^k=\bar\mu^k, \quad \theta=\bar\mu,\quad
     \varphi^k =B(X,T^k_1,T^k_2)\quad\text{and}\quad \varphi=B(X,T_1,T_2),
\end{gather*} 
yields $B(X,T^k_1,T^k_2) \to B(X,T_1,T_2)$ in $\bar\mu$-probability.
There exists a subsequence such that this convergence holds $\bar\mu$-a.s.
Hence, we can invoke Lemma \ref{lem:w1bin} and derive the assertion in the second statement of the proposition for this particular subsequence.
By the above reasoning any subsequence admits a subsubsequence which fulfills the conclusion of the second statement of the proposition, which readily implies the statement.\end{proof}
 \begin{proof}[Proof of Corollary \ref{cor:contshc}]
 For the continuity in $\W_p$, it is enough to combine Proposition \ref{prop:shadcoupl} with \begin{align*}
   &\forall \mu,\mu'\in{\cal P}_p(\R),\;\W^p_p(\bar\mu_\chi,\bar\mu'_\chi)\le \int_{[0,1]\times[0,1]}|F_\mu^{-1}(v)-F_{\mu'}^{-1}(v)|^p\chi(dv,du)=\W_p^p(\mu,\mu'),\\
 &\forall \pi,\pi'\in\mathcal P_p(\R \times [0,1] \times \R),\;\W_p(\proj_{1,3}\pi,\proj_{1,3}\pi')\le \W_p(\pi,\pi')\le \AW_p(\pi,\pi').\end{align*} To prove the reinforced continuity in $\AW_p$, we consider a sequence $((\mu^k,\nu^k)_k)$ in $\mathcal P_p(\R ) \times \mathcal P_p(\R)$ with $\mu^k\le_{cx}\nu^k$ converging to $(\mu,\nu)$ where $\mu$ does not weight points. For notational simplicity, we denote $\SC^k$ and $\SC$ respectively in place of $\SC(\bar\mu^k_\chi,\nu^k)$ and $\SC(\bar\mu_\chi,\nu)$. By the reinforcement of Proposition \ref{prop:shadcoupl}, $\AW_p(\SC^k
     ,\SC)\rightarrow 0$. Let $\eta^k\in\Pi(\bar\mu^k_\chi,\bar\mu_\chi)$ be optimal for $\AW_p(\SC^k,\SC)$. We have
\begin{align*}
   \int_{[0,1]\times[0,1]}&
  \W_p^p(\SC^k_{F_{\mu^k}^{-1}(v),u},\SC_{F_{\mu}^{-1}(v),u})\chi(dv,du)\le 2^{p-1}\AW^p_p(\SC^k,\SC)\\&+2^{p-1}\int_{[0,1]\times[0,1]\times\R\times[0,1]}
  \W_p^p(\SC_{x,w},\SC_{F_{\mu}^{-1}(v),u})\chi(dv,du)\eta^k_{F_{\mu^k}^{-1}(v),u}(dx,dw)\end{align*}
The second term in the right-hand side goes to $0$ according to Lemma \ref{lem:eder} since $\bar\mu_\chi$ is the image of $\chi$ by $[0,1]\times[0,1]\ni(v,u)\mapsto (F_\mu^{-1}(v),u)\in\R\times [0,1]$ and, using $|x-F_{\mu}^{-1}(v)|^p\le 2^{p-1}(|x-F_{\mu^k}^{-1}(v)|^p+|F_{\mu^k}^{-1}(v)-F_{\mu}^{-1}(v)|^p)$, we have
\begin{align*}
  \int_{[0,1]^2\times\R\times[0,1]}|x-F_{\mu}^{-1}(v)|^p&+|w-u|^p\chi(dv,du)\eta^k_{F_{\mu^k}^{-1}(v),u}(dx,dw)\le 2^{p-1}\left(\AW_p^p(\SC^k,\SC)+\W_p^p(\mu^k,\mu)\right)\rightarrow 0.
\end{align*}
Hence $\int_{[0,1]\times[0,1]}\W_p^p\left(\SC^k_{F_{\mu^k}^{-1}(v),u},\SC_{F_{\mu}^{-1}(v),u}\right)\chi(dv,du)\rightarrow 0$.
Let $\pi^k$ (resp. $\pi$) denote the shadow coupling with copula $\chi$ between $\mu^k$ and $\nu^k$ (resp. $\mu$ and $\nu$) and for $(x,w)\in\R\times[0,1]$, $\vartheta^k(v,w)=F_{\mu^k}(F_{\mu^k}^{-1}(v)-)+w\mu^k(\{F_{\mu^k}^{-1}(v)\})$. The image of the Lebesgue measure on $[0,1]\times[0,1]$ by $\vartheta^k$ is the Lebesgue measure on $[0,1]$ and for each $v\in(0,1)$, $F_{\mu^k}^{-1}(\vartheta^k(v,w))=F_{\mu^k}^{-1}(v),\;dw$ a.e.. Hence $dv$ a.e.,
$$\pi^k_{F_{\mu^k}^{-1}(v)}=\int_{[0,1]\times[0,1]}\SC^k_{F_{\mu^k}^{-1}(\vartheta^k(v,w)),u}\chi_{\vartheta^k(v,w)}(du)dw.$$
Since $\mu$ does not weight points, $F_\mu^{-1}$ is one-to-one and $\pi_{F_{\mu}^{-1}(v)}=\int_{[0,1]}\SC_{F_{\mu}^{-1}(v),u}\chi_v(du)$, $dv$ a.e.. 
By the triangle inequality and Jensen's inequality (see for instance \cite[Proposition A.9]{BeJoMaPa21b}), we have
\begin{align*}
   \W_p(\pi^k_{F_{\mu^k}^{-1}(v)},\pi_{F_{\mu}^{-1}(v)})&\le \int_{[0,1]\times[0,1]}\W_p\left(\SC^k_{F_{\mu^k}^{-1}(\vartheta^k(v,w)),u},\SC_{F_{\mu}^{-1}(\vartheta^k(v,w)),u}\right)\chi_{\vartheta^k(v,w)}(du)dw\\&+\int_{[0,1]}\W_p\left(\int_{[0,1]}\SC_{F_{\mu}^{-1}(\vartheta^k(v,w)),u}\chi_{\vartheta^k(v,w)}(du),\int_{[0,1]}\SC_{F_{\mu}^{-1}(v),u}\chi_v(du)\right)dw.
\end{align*}
Using again that the image of the Lebesgue measure on $[0,1]\times[0,1]$ by $\vartheta^k$ is the Lebesgue measure on $[0,1]$, we deduce that \begin{align*}
  \AW_p^p(\pi,\pi^k)&\le\int_{[0,1]}|F_{\mu}^{-1}(v)-F_{\mu^k}^{-1}(v)|^p+\W_p^p(\pi_{F_{\mu}^{-1}(v)},\pi^k_{F_{\mu^k}^{-1}(v)})dv\\
  &\le \W_p^p(\mu,\mu^k)+2^{p-1}\int_{[0,1]\times[0,1]}\W_p^p\left(\SC^k_{F_{\mu^k}^{-1}(v),u},\SC_{F_{\mu}^{-1}(v),u}\right)\chi(dv,du)\\&+2^{p-1}\int_{[0,1]\times[0,1]}\W^p_p\left(\int_{[0,1]}\SC_{F_{\mu}^{-1}(\vartheta^k(v,w)),u}\chi_{\vartheta^k(v,w)}(du),\int_{[0,1]}\SC_{F_{\mu}^{-1}(v),u}\chi_v(du)\right)dwdv.\end{align*}
  The sum of the first two terms in the right-hand side goes to $0$ as $n\to\infty$.
Since, by the proof of \cite[Proposition 4.2]{JoMa22} (see the equation just above (4.12) where $\theta(F_\mu^{-1}(v),w)=v$ since $F_\mu$ is continuous), $dvdw$ a.e., $\vartheta^k(v,w)\rightarrow v$, we have $\int_{[0,1]\times[0,1]}|\vartheta^k(v,w)-v|^pdvdw\rightarrow 0$ by Lebesgue's theorem  so that the third term in the right-hand side also goes to $0$ by Lemma \ref{lem:eder} due to Eder \cite{Ed19}.
\end{proof}
\begin{remark}
Like in the proof of \cite[Proposition 4.2]{JoMa22}, we could check that $\AW_p(\pi^k,\pi)$ still goes to $0$ as $n\to\infty$ when
$$\forall x\in\R,\;\mu^k(\{x\})>0\Rightarrow\exists (x^k)_k\in\R^\N,\;F_{\mu^k}(x^k)\wedge F_\mu(x)-F_{\mu^k}(x^k-)\vee F_\mu(x-){\rightarrow} \mu(\{x\}).$$\end{remark}

\begin{proof}[Proof of Lemma \ref{lem:MZ}]
As $\theta^k \to \theta$ in total variation, we have that the total variation distance between $(\id,\varphi^k)_\# \theta^k$ and $(\id,\varphi^k)_\# \theta$ vanishes as $k \to \infty$.
Thus, since $((\id,\varphi^k)_\# \theta^k)_{k \in \N}$ converges to $(\id,\varphi)_\# \theta =: \eta$ in $\mathcal P({\cal V} \times {\cal Z})$, the same holds for the sequence $(\eta^k)_{k \in \N}$ where $\eta^k := (\id,\varphi^k)_\# \theta$.
 W.l.o.g.\ we assume that the metrics $d_\X$ and $d_\Y$ are both bounded, so that $\eta^k \to \eta$ in $\W_1$ and can pick couplings $\chi^k\in\Pi(\theta,\theta)$ such that
 \begin{align*}
   \W_1(\eta^k,\eta)=\int_{{\cal V}\times {\cal V}} d_{\cal V}(v,\hat v)+d_{\cal Z}(\varphi^k(v),\varphi(\hat v)) \, \chi^k(dv,d\hat v).
 \end{align*}
    By the triangle inequality we have
    \begin{align}\nonumber
        \int d_{\cal Z}(\varphi^k(v),\varphi(v)) \, \theta(dv) &= \int d_{\cal Z}(\varphi^k(v),\varphi(v)) \, \chi^k(dv,d\hat v) \\
        \nonumber
        &\le \int d_{\cal Z}(\varphi^k(v),\varphi(\hat v)) + d_{\cal V}(\varphi(\hat v), \varphi(v)) \, \chi^k(dv,d\hat v) \\
        \label{eq:lem.MZ.triangle}
        &= \W_1(\eta^k,\eta) +\int d_{\cal Z}(\varphi(v),\varphi(\hat v)) \, \chi^k(dv,d\hat v).
    \end{align}
    The first summand in \eqref{eq:lem.MZ.triangle} vanishes for $k \to \infty$ as $\eta^k \to \eta$ in $\mathcal W_1$, whereas the second summand vanishes as consequence of Lemma \ref{lem:eder} due to Eder \cite{Ed19}  since $\int d_{\cal V}(v,\hat v) \, \chi^k(dv,d\hat v) \to 0$.
\end{proof}

\begin{proof}[Proof of Lemma \ref{lem:w1bin}]
To show \ref{it:w1bin.1} and \ref{it:w1bin.2} we may assume w.l.o.g.\ that $y^k<x<z^k$, since $y^k=x=z^k$ implies $B(x,y^k,z^k)=\delta_x\neq B(x,y,z)$.
Then we compute
\begin{align}
    {\cal W}_1\left(B(x,y^k,z^k),B(x, y,z)\right)=&\left(\frac{z-x}{z-y}\wedge \frac{z^k-x}{z^k-y^k}\right)|y-y^k|+\left(\frac{z-x}{z-y}-\frac{z^k-x}{z^k-y^k}\right)^+(z^k-y)\notag\\&+\left(\frac{z^k-x}{z^k-y^k}-\frac{z-x}{z-y}\right)^+(z-y^k)+\left(\frac{x-y}{z-y}\wedge \frac{x-y^k}{z^k-y^k}\right)|z-z^k|.\label{eq:w1nbern}
\end{align}
When $|y^k-y|+|z^k-z|\to 0$ then each summand in \eqref{eq:w1nbern} also vanishes, showing the $\impliedby$-implication in \ref{it:w1bin.1}.
Conversely, when ${\cal W}_1(B(x,y^k,z^k), B(x,y,z)) \to 0$ then all four summands in the right-hand side of \eqref{eq:w1nbern} have to go to $0$ individually as $k \to \infty$.
Thus, since $z^k-y\ge x-y > 0$ and $z-y^k\ge z-x > 0$, we find, due to the second and third terms in \eqref{eq:w1nbern}, that $\frac{z^k-x}{z^k-y^k}\to \frac{z-x}{z-y}$.
Then using that the first and fourth terms also have to converge to 0, we get $|y^k-y|+|z^k-z| \to 0$, which completes the proof of \ref{it:w1bin.1}.
         
On the other hand, we have
\[
    \frac{\W_1\left(B(x,y^k,z^k),\delta_x\right)}{2} = \frac{(z^k-x)(x-y^k)}{z^k-y^k}=\frac{(z^k-x)\vee(x-y^k)}{z^k-y^k}\big((z^k-x)\wedge(x-y^k)\big).
\]
Since $z^k-y^k \ge (z^k-x)\vee(x-y^k) \ge \frac12 (z^k-y^k)$, we deduce that 
\[
    1 \le \frac{{\cal W}_1\left(B(x,y^k,z^k),\delta_x\right)}{(z^k-x)\wedge(x-y^k)}\le 2,
\]
which yields \ref{it:w1bin.2} and completes the proof.
\end{proof}

\subsection{Proof of Proposition \ref{prop:approximation_finite_pairs}}
\label{ssec:proof.prop.approximation_finite_pairs}

The proof of Proposition \ref{prop:approximation_finite_pairs} relies on Lemma \ref{lem:mathcal I approximates} below and Wasserstein projections in the convex order.
By \cite[Proposition 4.2]{AlCoJo20} there is a map $\Jast \colon \mathcal P_1(\R) \times \mathcal P_1(\R) \to \mathcal P_1(\R)$ satisfying
\begin{equation}
   \label{eq:wproj}
   \mu\le_c\Jast(\mu,\nu)\mbox{ and }{\cal W}_1(\Jast(\mu,\nu),\nu)=\inf_{\mu\le_c\eta}{\cal W}_1(\eta,\nu),
\end{equation}
which is called a Wasserstein projection in the convex order.
According to Theorem 1.1 \cite{JoMaPa22}, $\Jast$ is Lipschitz continuous: for  $\mu,\nu,\mu',\nu'\in\mathcal P_p(\R)$ with $p\ge 1$, we have
	\begin{align}
		\label{eq:J_Lipschitz}
		\mathcal W_p(\Jast(\mu,\nu),\Jast(\mu',\nu'))\le\phantom{2}\mathcal W_p(\mu,\mu')+2\mathcal W_p(\nu,\nu').
	\end{align}

\begin{lemma}\label{lem:mathcal I approximates}
	Let $a,b\in\R\cup\{-\infty,+\infty\}$, $a<b$, and $\rho \in \mathcal P_1(\R)$ be concentrated on $[a,b]$ with mean $x \in \R$.
	Let $(a^m)_{m\in\N},(b^m)_{m\in\N}$, $a < a^m < x < b^m < b$, be monotone sequences with $a^m \to a$ and $b^m \to b$.
    Then
    \[ \mathcal W_1\left(\rho\wedge_c\left(\frac{b^m - x}{b^m - a^m} \delta_{a^m} + \frac{x - a^m}{b^m - a^m} \delta_{b^m}\right),\rho\right) \to 0. \]
\end{lemma}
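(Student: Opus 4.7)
The plan is to argue via potential functions. For $\mu \in \mathcal P_1(\R)$ write $u_\mu(y) := \int |y - z|\, \mu(dz)$, and recall the fundamental equivalence $\mu \le_{cx} \nu \iff u_\mu \le u_\nu$ for measures sharing mass and mean. Setting $\sigma^m := \rho \wedge_c \eta^m$, where $\eta^m := \frac{b^m - x}{b^m - a^m} \delta_{a^m} + \frac{x - a^m}{b^m - a^m} \delta_{b^m}$, the interpretation of $\wedge_c$ as the convex-order infimum entails that $u_{\sigma^m}$ coincides with the largest convex minorant of $u_\rho \wedge u_{\eta^m}$; that this minorant is the potential of a genuine probability with mean $x$ is guaranteed by the observation that $|y - x|$ is a common convex lower bound of $u_\rho$ and $u_{\eta^m}$, and fixes the correct asymptotic slopes $\pm 1$.

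Denote by $\eta^\infty := \frac{b-x}{b-a}\delta_a + \frac{x-a}{b-a}\delta_b$ the extremal two-point measure in the case where $a$ and $b$ are finite. A direct computation from the explicit piecewise-linear form of $u_{\eta^m}$ shows that $(u_{\eta^m})_m$ is pointwise nondecreasing and converges uniformly on $\R$ to $u_{\eta^\infty}$, with $\|u_{\eta^m} - u_{\eta^\infty}\|_\infty = O((a^m - a) + (b - b^m))$. Since $\rho$ is a probability on $[a, b]$ with mean $x$, we have $\rho \le_{cx} \eta^\infty$, so $u_\rho \le u_{\eta^\infty}$ and consequently $u_\rho \wedge u_{\eta^m} \nearrow u_\rho$ pointwise.

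The key step is to upgrade this to pointwise convergence $u_{\sigma^m}(y_0) \to u_\rho(y_0)$ for every $y_0 \in \R$. The upper bound $u_{\sigma^m} \le u_\rho$ is immediate. For the reverse, fix $y_0$ and let $L$ be a supporting affine function to the convex function $u_\rho$ at $y_0$, so that $L \le u_\rho$ and $L(y_0) = u_\rho(y_0)$; via the chain $L \le u_\rho \le u_{\eta^\infty}$ we get $L \le u_{\eta^\infty}$. Letting $\epsilon_m := \|u_{\eta^\infty} - u_{\eta^m}\|_\infty$, which vanishes as $m \to \infty$, the affine function $L - \epsilon_m$ satisfies $L - \epsilon_m \le u_{\eta^m}$ on $\R$ and also $L - \epsilon_m \le L \le u_\rho$, whence $L - \epsilon_m \le u_\rho \wedge u_{\eta^m}$. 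Being affine (hence convex), $L - \epsilon_m \le u_{\sigma^m}$, and evaluating at $y_0$ gives $u_{\sigma^m}(y_0) \ge u_\rho(y_0) - \epsilon_m \to u_\rho(y_0)$.

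Finally, the pointwise convergence of potentials $u_{\sigma^m} \to u_\rho$ yields weak convergence $\sigma^m \to \rho$ in $\mathcal P(\R)$ via the identification of $\mu$ with $u_\mu''/2$ distributionally. Combined with $\sigma^m \le_{cx} \rho$ (hence $\int|y|\, \sigma^m(dy) \le \int|y|\, \rho(dy)$) and the evaluation $u_{\sigma^m}(x) = \int |z - x|\, \sigma^m(dz) \to \int |z - x|\, \rho(dz) = u_\rho(x)$, one obtains convergence of first moments, and therefore $\mathcal W_1(\sigma^m, \rho) \to 0$. The main anticipated obstacle lies in the case $a = -\infty$ or $b = +\infty$: there $u_{\eta^m}$ only converges pointwise to a non-potential limit function rather than uniformly, so one must adapt the supporting-line argument by exploiting that $L$ has slope in $[-1, 1]$ and checking the inequality $L - \epsilon_m \le u_{\eta^m}$ directly on each of the three regions of $u_{\eta^m}$'s piecewise-linear form.
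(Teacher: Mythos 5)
Your approach is in the same spirit as the paper's: work with potential functions, use that $u_{\rho\wedge_c\eta^m}$ is the largest convex minorant of $u_\rho\wedge u_{\eta^m}$, and produce a convex function lying below $u_\rho\wedge u_{\eta^m}$ and close to $u_\rho$. In the case where both $a$ and $b$ are finite your argument is complete and correct — indeed your bound $u_\rho-\epsilon_m\le u_{\sigma^m}\le u_\rho$ is uniform, not just pointwise as stated, so you could skip the roundabout ``pointwise potentials $\Rightarrow$ weak convergence $+$ first-moment convergence'' step and conclude $\W_1$-convergence directly from uniform convergence of potentials (the detour is correct but unnecessary).

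The genuine gap is the case $a=-\infty$ or $b=+\infty$, which the statement explicitly allows. There $\eta^\infty$ does not exist and $\epsilon_m:=\|u_{\eta^\infty}-u_{\eta^m}\|_\infty$ is undefined, so the chain $L\le u_\rho\le u_{\eta^\infty}$ collapses. You flag this and sketch an adaptation, but the sketch does not close. If you trace what $\epsilon_m$ would have to be so that $L-\epsilon_m\le u_{\eta^m}$ holds for \emph{every} supporting line $L\le u_\rho$, the minimal choice is $\epsilon_m=\sup_y\bigl(u_\rho(y)-u_{\eta^m}(y)\bigr)$; showing this quantity vanishes is precisely the paper's central estimate, and it requires the finite-first-moment decay $\lim_{y\to-\infty}\bigl(u_\rho(y)-(x-y)\bigr)=0$ (and its mirror at $+\infty$) combined with the observation that, by convexity of $u_\rho$ and affinity of $u_{\eta^m}$ on $[a^m,b^m]$, the supremum of $u_\rho-u_{\eta^m}$ is attained outside $[a^m,b^m]$. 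Your ``check each of the three regions'' remark points in that direction, but without introducing this $\epsilon_m$ and proving it tends to $0$ — which is where the actual work lies when $a$ or $b$ is infinite — the proof is incomplete. The paper avoids the auxiliary $\eta^\infty$ entirely and works with $\sup_y(u_\rho-u_{\eta^m})$ from the start, which treats both finite and infinite endpoints in one stroke.
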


\begin{proof}[Proof of Lemma \ref{lem:mathcal I approximates}]
Let for each $m \in \N$, $\eta^m=\frac{b^m - x}{b^m - a^m} \delta_{a^m} + \frac{x - a^m}{b^m - a^m} \delta_{b^m}$ and $\rho^m=\rho\wedge_{c} \eta^m$. Let us check that $\lim_{m\to\infty}\sup_{y\in(-\infty,a^m]}(u_\rho(y)-(x-y))=0$. If $a=-\infty$, this is a consequence of $\lim_{y\to-\infty}(u_\rho(y)-(x-y))=0$. If $a>-\infty$, $u_\rho(y)=x-y$ for $y\le a$ and since $u_\rho$ is $1$-Lipschitz, $$\forall y\in[a,a^m],\;u_\rho(y)\le u_\rho(a)+(y-a)=x-a+y-a\le x-y+2(a^m-a).$$

        Since $u_{\eta^m}(y)=x-y$ for $y\in(-\infty,a^m]$ and using a symmetric reasoning  to deal with the supremum over $[b^m,+\infty)$ we deduce that $\lim_{m\to\infty}\sup_{y\in(-\infty,a^m]\cup[b^m,+\infty)}(u_\rho(y)-u_{\eta^m}(y))=0$. By convexity of $u_\rho$ and since $u_{\eta^m}$ is affine on $[a^m,b^m]$, $\sup_{y\in\R}(u_\rho(y)-u_{\eta^m}(y))=\sup_{y\in(-\infty,a^m]\cup[b^m,+\infty)}(u_\rho(y)-u_{\eta^m}(y))$ so that $$\lim_{m\to\infty}\sup_{y\in\R}(u_\rho(y)-u_{\eta^m}(y))=0.$$
        Since the convex function $u_\rho-\sup_{y\in\R}(u_\rho(y)-u_{\eta^m}(y))$ is not greater than $u_\rho\wedge u_{\eta^m}$, $$u_\rho-\sup_{y\in\R}(u_\rho(y)-u_{\eta^m}(y))\le \co(    u_\rho\wedge u_{\eta^m})=u_{\rho^m}\le u_\rho.$$
Hence $u_{\rho^m}$ converges uniformly to $u_\rho$ as $m\to\infty$, which implies that 
$\mathcal W_1(\rho^m,\rho)\underset{m\to+\infty}\longrightarrow0$.\end{proof}

\begin{proof}[Proof of Proposition \ref{prop:approximation_finite_pairs}]
    For every $j = 1,\ldots,J$, we have $\mu^k_j \to \mu_j$ in $\mathcal M_1(\R)$ with $\mu_j(\R) > 0$, so that $\mu^k_j(\R) \to \mu_j(\R) > 0$ and, for $k$ large enough, $\min_{1\le j\le J}\mu^k_j(\R)>0$.  
We are going to check the existence of $M<\infty $ such that for each $\varepsilon\in(0,1)$, we can find sequences $(\nu^{k,\varepsilon}_j)_{k\in\N}$ in $\mathcal M_1(\R)$ that satisfy
	\begin{equation}\label{summarynukj}
	\mu^k_j \leq_c \nu^{k,\varepsilon}_j, \quad\sum_{j = 1}^J \nu^{k,\varepsilon}_j = \nu^k\quad\text{and}\quad\limsup_{k\to+\infty}\sum_{j=1}^J\mu_j(\R)\mathcal W_1\left(\frac{\nu^{k,\varepsilon}_j}{\mu^k_j(\R)},\frac{\nu_j}{\mu_j(\R)}\right)\le M\varepsilon .
	\end{equation}
As $\varepsilon$ is arbitrary, the conclusion follows easily from \eqref{summarynukj}.

Before jumping into the various steps of proving \eqref{summarynukj}, we fix the following notation:
Let $a\in\{-\infty\}\cup\R$ and $b\in\R\cup\{+\infty\}$ be the endpoints of the irreducible component $I=(a,b)$ of $(\mu,\nu)$.
Further, let 
\[ 
    \pi^j = \frac{\mu_j}{\mu_j(\R)} \times \pi^j_x \in \Pi_M\left(\frac{\mu_j}{\mu_j(\R)},\frac{\nu_j}{\nu_j(\R)}\right). 
\]
Up to modifying $x\mapsto\pi^j_x$ on a $\mu$-null set, we suppose w.l.o.g.\ that for all $x\in(a,b)$, $\pi^j_x$ is concentrated on $[a,b]$ and $\mean(\pi^j_x)=x$. 
Finally, for $m \in \N$, pick $a^m,b^m \in I, a^m<b^m$, with $a^m \searrow a$, and $b^m \nearrow b$, so that $\mu_j([a^m,b^m]) > 0$ and $\mu_j(\{a^m,b^m\}) = 0$ for each $j = 1,\ldots,J$.

{\bf Step 1:} We claim that when $m$ is sufficiently large, there exists $\tilde\nu_j \in \mathcal M_1(\R)$ with
\begin{equation}
    \label{eq:step1}
    \W_1(\tilde\nu_j,\nu_j) < \epsilon,\quad
    \tilde\nu_j\le_c\nu_j,\quad
    \mu_j\vert_{[a^{m},b^{m}]}\le_c\tilde\nu_j\vert_{[a^{m},b^{m}]}\mbox{ and }    \tilde\nu_j\vert_{\R\backslash[a^m,b^m]}=\mu_j\vert_{\R\backslash[a^m,b^m]}.
\end{equation}
To show \eqref{eq:step1} we define $q_x^m$ as the unique probability measure supported on $\{a^m,b^m\}$ with $\mean(q_x^m) = x$ when $x \in [a^m,b^m]$, and $\delta_x$ otherwise, i.e.,
\[	
    q_x^m := \begin{cases} \frac{b^m - x}{b^m - a^m} \delta_{a^m} + \frac{x - a^m}{b^m - a^m} \delta_{b^m} & \text{if }x \in (a^m,b^m), \\ \delta_x & \text{else}.\end{cases}	
\]
Set $\pi^{j,m}(dx,dy) := \mu_j(dx)\,(\pi^{j}_x\wedge_c q^m_x)(dy)$. The measure $\pi^{j,m}$ is a martingale coupling between $\mu_j$ and its second marginal, which we denote by $\nu_{j,m}$ and thus $\nu_{j,m} \leq_c \nu_j$.
Thanks to Lemma \ref{lem:mathcal I approximates} we have for every $x \in (a,b)$ that $\mathcal W_1(\pi^j_x,\pi^{j}_x\wedge_c q^m_x)\to0$.
Furthermore, by the triangle inequality and convexity of the absolute value we have
\[
	\mathcal W_1(\pi^j_x, \pi^{j}_x\wedge_c q^m_x) \leq\mathcal W_1(\pi^j_x,\delta_0)+\mathcal W_1(\delta_0,\pi^{j}_x\wedge_c q^m_x)\le2\mathcal W_1(\pi^j_x,\delta_0),
\] 
where the right-hand side is $\mu_j$-integrable. 
Hence, we get by dominated convergence
 \begin{equation}\label{AW1pijpijm}
	\mathcal{AW}_1(\pi^j,\pi^{j,m}) \leq \int_\R \mathcal W_1(\pi^j_x,\pi^{j}_x\wedge_c q^m_x) \, \mu_j(dx)\underset{m\to+\infty}{\longrightarrow} 0.
\end{equation}
Letting $m$ be sufficiently large, \eqref{AW1pijpijm} yields that $\tilde\nu_j:=\nu_{j,m}$ satisfies $\W_1(\tilde\nu_j,\nu_j) < \varepsilon$.
Since, for $x\in[a^m,b^m]$, $(\pi^j_x\wedge_c q^m_x)([a^m,b^m])=1$ and for $x\in\R\backslash[a^m,b^m]$, $\pi^j_x\wedge_c q^m_x=\delta_x$, $\tilde\nu_j\vert_{[a^m,b^m]}$ is the second marginal of $\mu_j\vert_{[a^m,b^m]}\times(\pi^j_x\wedge_c q^m_x)$ and \eqref{eq:step1} holds.
Observe that $\tilde \nu_j([a^m,b^m]) = \mu_j([a^m,b^m])$ implies that 
\[
    \mean\left(\frac{\mu_j|_{[a^m,b^m]}}{\mu_j([a^m,b^m])} \right) =  \mean\left(\frac{\nu_j|_{[a^m,b^m]}}{\mu_j([a^m,b^m])} \right) =: \tilde x^m_j.
\]
	
{\bf Step 2:} Next we construct, for $j \in \{1,\ldots,J\}$, sequences $(\tilde \nu^k_j)_{k \in \N}$ in $\mathcal M_1(\R)$ such that
\begin{equation} \label{eq:step2}
    \mu^k_j \le_c \tilde \nu^k_j, \quad\sum_{j = 1}^J \tilde \nu^k_j \le_c \nu^k\quad\text{and }  \tilde\nu^k_j \underset{k\to\infty}{\to} (1-\epsilon)\tilde \nu_j + \epsilon\mu_j \text{ in }\mathcal M_1(\R).
\end{equation}
Since $\mu_j(\{a^m,b^m\}) = 0$, we have for every $h \in C_b(\R)$ that the discontinuities of $h \mathbbm 1_{[a^m,b^m]}$ are a $\mu_j$-null set, whence  we get by Portmanteau's theorem 
\[
    \int \mathbbm 1_{[a^m,b^m]}(x) h(x) \, \mu_j^k(dx) \to \int \mathbbm 1_{[a^m,b^m]}(x) h(x) \, \mu_j(dx).
\]
With Lemma \ref{lem:convergence_dominated}, we deduce that $\mu_j^k|_{[a^m,b^m]}$ converges to $\mu_j|_{[a^m,b^m]}$ in $\mathcal M_1(\R)$ as $k\to\infty$.
When $k$ is sufficiently large, we have $\mu_j^k([a^m,b^m]) > 0$ and define
\[
    x^{k,m}_j := \mean\left( \frac{\mu^k_j|_{[a^m,b^m]}}{\mu^k_j([a^m,b^m])} \right)\quad \text{and}\quad
    \hat \nu_j^k := \mathcal J\left( \frac{\mu_j^k\vert_{[a^{m},b^{m}]}}{\mu^k_j([a^{m},b^{m}])},
    \frac{\tilde \nu_j\vert_{[a^{m},b^{m}]}}{\mu_j([a^{m},b^{m}])}  \right) \wedge_{c} q_{x^{k,m}_j}^m.
\]
To simplify notation, we use the above definitions also when $\mu_j^k([a^m,b^m]) = 0$ under the convention that the undefined term $\frac{\mu^k_j|_{[a^m,b^m]}}{\mu^k_j([a^m,b^m])}$ is replaced by $\frac{\mu_j|_{[a^m,b^m]}}{\mu_j([a^m,b^m])}$.
By Lemma \ref{lem:convergence_subprobabilities} and $\mathcal W_1$-Lipschitz continuity of $\Jast$, c.f.\ \eqref{eq:J_Lipschitz}, we have
	\[
	\mathcal J	\left( \frac{\mu_j^k\vert_{[a^{m},b^{m}]}}{\mu_j^k([a^{m},b^{m}])},
	\frac{\tilde \nu_j\vert_{[a^{m},b^{m}]}}{\mu_j([a^{m},b^{m}])}  \right)\underset{k\to+\infty}{\longrightarrow} \mathcal J \left( \frac{\mu_j\vert_{[a^{m},b^{m}]}}{\mu_j([a^{m},b^{m}])},
	\frac{\tilde \nu_j\vert_{[a^{m},b^{m}]}}{\mu_j([a^{m},b^{m}])}  \right) = \frac{\tilde\nu_j\vert_{[a^m,b^m]}}{\mu_j([a^m,b^m])}\quad \text{in }\mathcal W_1,
	\]
 where the last equality follows from the fact that  $\mu\vert_{[a^{m},b^{m}]}\le_c\tilde\nu_j\vert_{[a^{m},b^{m}]}$.
Again by Lemma \ref{lem:convergence_subprobabilities}, we obtain that $x^{k,m}_j \to \tilde x_j^m$ and therefore $q_{x^{k,m}_j}^m \to q_{\tilde x^m_j}^m$ in $\mathcal W_1$.
Thus, \cite[Lemma 4.1]{BJMP22}, i.e.\ continuity of $\wedge_c$, provides that
\[
    \hat \nu^k_j \underset{k \to +\infty}{\longrightarrow} \frac{\tilde \nu_j|_{[a^m,b^m]}}{\mu_j([a^m,b^m])} \wedge_c q^m_{\tilde x_j^m} \quad \text{in }\W_1.
\]
Since $\tilde \nu_j\vert_{[a^m,b^m]}$ is concentrated on $[a^m,b^m]$ with mass $\mu_j([a^m,b^m])$ and mean $\tilde x_j^m$, we have $\tilde \nu_j\vert_{[a^m,b^m]}\leq_c \mu_j([a^m,b^m]) q^m_{\tilde x^m_j}$.
Hence, 
\begin{equation}\label{convhatnujk}
    \hat \nu^k_j \underset{k \to +\infty}{\longrightarrow} \frac{\tilde\nu_j\vert_{[a^m,b^m]}}{\mu_j([a^m,b^m])}\quad \text{in }\mathcal W_1.	
\end{equation}	
We set
\begin{equation}\label{deftildenukj}
    \tilde \nu^k_j := (1 - \epsilon) \left( \mu^k_j\vert_{\R \setminus [a^m,b^m]} + \mu^k_j([a^m,b^m]) \hat \nu^k_j \right) + \epsilon \mu^k_j.
\end{equation}
By definition of $\hat \nu^k_j$ and since $\frac{\mu^k_j|_{[a^m,b^m]}}{\mu^k_j([a^m,b^m])}\le_c q_{x^{k,m}_j}^m$, we have $\frac{\mu^k_j|_{[a^m,b^m]}}{\mu^k_j([a^m,b^m])}\le_c \hat\nu^k_j$, which yields
\begin{equation}\label{mukdominatedbynukj}
\mu^k_j=(1 - \epsilon) \left( \mu^k_j\vert_{\R \setminus [a^m,b^m]} + \mu^k_j\vert_{[a^m,b^m]}\right) + \epsilon \mu^k_j\le_c\tilde \nu^k_j.
\end{equation}
In order to complete step 2, it remains to show that, when $k$ is sufficiently large, then
\begin{equation}
    \label{eq:step2.2}
    \sum_{j = 1}^J \tilde \nu^k_j \le_{c} \nu^k.
\end{equation}
As $\mu^k_j \to \mu_j$ and $\mu^k_j|_{[a^m,b^m]} \to \mu_j|_{[a^m,b^m]}$ in $\mathcal M_1(\R)$ and by \eqref{convhatnujk} and \eqref{eq:step1}, we also obtain that $\tilde \nu^k_j \to (1 - \epsilon) \tilde \nu_j + \epsilon \mu_j$ in $\mathcal M_1(\R)$.
In turn, this implies locally uniform convergence of the sequence of potential functions $(u_{\tilde \nu^k_j})_{k \in \N}$ to $u_{(1-\epsilon)\tilde\nu_j + \epsilon\mu_j}$.
At the same time, as $\nu^k \to \nu$ in $\mathcal P_1(\R)$, we have uniform convergence of $u_{\nu^k}$ to $u_\nu$.
Thus, we find, for each $\delta > 0$, an index $k(\delta)\in \N$ such that for all $k\geq k(\delta)$ and $j\in\{1,\cdots,J\}$ we have
	\begin{equation}\label{potentialfunctionskgekdelta}
	u_{\tilde \nu^k_j}\vert_{[a^m,b^m]} \leq (1 - \epsilon) u_{\tilde \nu_j}\vert_{[a^m,b^m]} + \epsilon u_{\mu_j}\vert_{[a^m,b^m]} + \frac{\delta}{J}\quad\text{and}\quad
	u_{\nu} \leq u_{\nu^k} + \delta.
	\end{equation}
As $(\mu,\nu)$ is irreducible with component $I = (a,b) \supseteq [a^m,b^m]$, we can fix a $\delta > 0$ such that
	\begin{equation}\label{umuunudelta}
	\epsilon u_\mu\vert_{[a^m,b^m]} \leq \epsilon u_\nu\vert_{[a^m,b^m]} - 2\delta.
	\end{equation}
	Let $k \geq k(\delta)$, and compute, for $y \in [a^m,b^m]$, 
	\begin{align*}
	\sum_{j = 1}^J u_{\tilde \nu^k_j}(y) &\leq (1 - \epsilon) 
	\sum_{j = 1}^Ju_{\tilde \nu_j}(y) + \epsilon \sum_{j = 1}^Ju_{\mu_j}(y) + \delta
    \\
    &\leq (1 - \epsilon) u_\nu(y) + \epsilon u_\nu(y) - \delta
    \\
	&= u_\nu(y) - \delta\leq u_{\nu^k}(y),
	\end{align*}
    where the first and last inequalities follow from \eqref{potentialfunctionskgekdelta}, the second from $\sum_{j=1}^J\mu_j=\mu$, $\sum_{j=1}^J\tilde\nu_j\le_c\sum_{j=1}^J\nu_j=\nu$ and \eqref{umuunudelta}.
    Next, let $y \in \R \setminus [a^m,b^m]$.
    Since $\mu_j^k([a^m,b^m]) \hat \nu^k_j$ and $\mu^k_j|_{[a^m,b^m]}$ are both concentrated on $[a^m,b^m]$ with the same mass and barycentre, we obtain that their potential functions take the same value at $y$.
    We have
	\begin{align*}
	\sum_{j = 1}^J u_{\tilde \nu^k_j}(y)&=(1 - \epsilon)\sum_{j=1}^J\left( u_{\mu^k_j\vert_{\R \setminus [a^m,b^m]}}(y) + \mu^k_j([a^m,b^m])u_{\hat\nu^k_j}(y) \right) + \epsilon \sum_{j=1}^Ju_{\mu^k_j}(y)\\
	&= (1 - \epsilon)\sum_{j=1}^J\left( u_{\mu^k_j\vert_{\R \setminus [a^m,b^m]}}(y) + u_{\mu^k_j|_{[a^m,b^m]}}(y) \right) + \epsilon u_{\mu^k}(y)= u_{\mu^k}(y)\le u_{\nu^k}(y).
	\end{align*}
    Summarizing, we have $\sum_{j = 1}^J u_{\tilde \nu^k_j} \le u_{\nu^k}$ for $k \ge k(\delta)$, which yields \eqref{eq:step2.2}.

{\bf Step 3:} The final step consists in modifying $(\tilde \nu^k_j)_{j = 1}^J$ to $(\nu^{k,\epsilon}_j)_{j = 1}^J$ that fulfills \eqref{summarynukj}.
Denote by $\chi^k \in \Pi_M(\sum_{j = 1}^J \tilde \nu^k_j, \nu^k)$ the inverse transform martingale coupling (see \cite{JoMa20}), which by \cite[Theorem 2.11]{JoMa20} satisfies
	\begin{equation}\label{stabilityInequalityLambdaConvergence}
	\int_{\R\times\R}\vert y-x\vert\,\chi^k(dx,dy)\le2\mathcal W_1\left(\sum_{j=1}^J\tilde\nu^k_j,\nu^k\right).
	\end{equation}	
    We define $\nu^{k,\varepsilon}_j$ as the second marginal of $\tilde\nu^k_j\times\chi^k_x$, that is
	\[
	\nu^{k,\varepsilon}_j(dy) :=  \int_\R \chi^k_x(dy) \, \tilde\nu^k_j(dx).
	\]
    Using \eqref{mukdominatedbynukj} we have
	\begin{equation}\label{sumofnukjequelnuk}
	\mu^k_j\le_c\tilde\nu^k_j\le_c\nu^{k,\varepsilon}_j\quad\text{and}\quad\sum_{j=1}^J\nu^{k,\varepsilon}_j=\nu^k.
      \end{equation}
    To prove the remaining claim in \eqref{summarynukj}, we estimate
	\begin{align}
	\mu_j(\R)\mathcal W_1\left(\frac{\nu^{k,\varepsilon}_j}{\mu^{k}_j(\R)},\frac{\nu_j}{\mu_j(\R)}\right) &\leq \frac{\mu_j(\R)}{\mu^k_j(\R)}\mathcal W_1\left(\nu^{k,\varepsilon}_j,\tilde \nu^k_j\right) +\mathcal W_1\left(\frac{\mu_j(\R)}{\mu^k_j(\R)}\tilde \nu^k_j,\tilde \nu_j\right) + \mathcal W_1\left(\tilde \nu_j,\nu_j\right).\label{eqtriangw1}
	\end{align}
    Because of \eqref{eq:step2}, $\sum_{j=1}^J\tilde\nu^k_j$ converges to $(1-\varepsilon)\sum_{j=1}^J\tilde\nu_j+\varepsilon \sum_{j=1}^J\mu_j$ in ${\cal W}_1$.
    We compute
        \begin{align*}
     \limsup_{k\to\infty}\sum_{j=1}^J\frac{\mu_j(\R)}{\mu^k_j(\R)}\mathcal W_1\left(\nu^{k,\varepsilon}_j,\tilde \nu^k_j\right)&=\limsup_{k\to\infty}\sum_{j=1}^J\mathcal W_1\left(\tilde \nu^k_j,\nu^{k,\varepsilon}_j\right)\le \limsup_{k \to +\infty}\int_{\R\times\R} |y-x|\, \chi^k(dx,dy)\\&\le 2\limsup_{k \to +\infty} \mathcal W_1\left( \sum_{j = 1}^J \tilde \nu^k_j, \nu^k \right)  \le 2 \mathcal W_1\left( (1-\varepsilon)\sum_{j=1}^J\tilde\nu_j+\varepsilon \sum_{j=1}^J\mu_j,\nu\right)\\&\le 2(1-\varepsilon)\sum_{j=1}^J\mathcal W_1(\tilde\nu_j,\nu_j)+2\varepsilon\mathcal W_1(\mu,\nu)\le 2\left(J+\mathcal W_1(\mu,\nu)\right)\varepsilon,
    \end{align*}
    where the first equality is due to $\mu_j^k(\R) \to \mu_j(\R)$, the first inequality because $\sum_{j = 1}^J \tilde \nu^k_j \times \chi^k_x = \chi^k$, the second due to \eqref{stabilityInequalityLambdaConvergence}, the second last by the convexity of the 1-Wasserstein distance and $\nu=\sum_{j = 1}^J \nu_j$, and the last by \eqref{eq:step1}.
    As $\varepsilon<1$, we obtain by \eqref{eq:step2} and convexity of the 1-Wasserstein distance that
    \begin{align*}
       \limsup_{k\to\infty}\mathcal W_1\left(\frac{\mu_j(\R)}{\mu^k_j(\R)}\tilde \nu^k_j,\tilde \nu_j\right)\le \varepsilon \mathcal W_1(\mu_j,\tilde\nu_j)\le \varepsilon\big(\mathcal W_1(\mu_j,\nu_j)+\varepsilon\big)\le \varepsilon\big(\mathcal W_1(\mu_j,\nu_j)+1\big).
    \end{align*}
    Plugging these estimates into \eqref{eqtriangw1} yields
\begin{align*}
   \limsup_{k\to\infty}\sum_{j=1}^J\mu_j(\R)\mathcal W_1\left(\frac{\nu^{k,\varepsilon}_j}{\mu^{k}_j(\R)},\frac{\nu_j}{\mu_j(\R)}\right) \le \left(4J+2\mathcal W_1(\mu,\nu)+\sum_{j=1}^J\mathcal W_1(\mu_j,\nu_j)\right)\varepsilon
\end{align*}
so that \eqref{summarynukj} holds with $M=4J+2\mathcal W_1(\mu,\nu)+\sum_{j=1}^J\mathcal W_1(\mu_j,\nu_j)$.
\end{proof}

\bibliographystyle{abbrv}
\bibliography{joint_biblio}
\end{document}